\documentclass{amsart}

\usepackage[paperheight=11in, 
    paperwidth=8.5in,
    outer=1.15in,
    inner=1.15in,
    bottom=1.15in,
    top=1.15in]{geometry} 

 \hyphenpenalty = 3000

\usepackage{amsmath, amsxtra, amsthm, amsfonts, amssymb, mathtools}
\usepackage{mathrsfs}
\usepackage{graphicx}
\usepackage{url}
\usepackage{color}
\usepackage{bbm}
\usepackage{tikz-cd}

\usepackage[T1]{fontenc}
\linespread{1.2}
\usepackage{enumitem}

 \usepackage[hidelinks]{hyperref} 
 \hypersetup{
    colorlinks,
    citecolor=magenta,
    filecolor=magenta,
    linkcolor=blue,
    urlcolor=black
}

\usetikzlibrary{positioning}
\usetikzlibrary{matrix}
\usetikzlibrary{decorations}
\usetikzlibrary{decorations.pathreplacing, decorations.pathmorphing, angles,quotes}
 
\newtheorem{maintheorem}{Theorem}

\newtheorem{theorem}{Theorem}[section]
\newtheorem{proposition}[theorem]{Proposition}
\newtheorem{lemma}[theorem]{Lemma}
\newtheorem{corollary}[theorem]{Corollary}

\theoremstyle{definition}
\newtheorem{definition}[theorem]{Definition}
\newtheorem{remark}[theorem]{Remark}
\newtheorem{example}[theorem]{Example}

\newcommand{\RR}{\mathbb R}
\newcommand{\CC}{\mathbb C}
\newcommand{\PP}{\mathbb P}
\newcommand{\ZZ}{\mathbb Z}
\newcommand{\M}{\mathrm{M}}
\newcommand{\be}{\mathbf{e}}
\newcommand{\kk}{\mathbbm{k}}

\newcommand{\indeg}{\operatorname{ind}} 
\newcommand{\rk}{\operatorname{rk}}

\title{Vanishing theorems for combinatorial geometries} 

\author{Christopher Eur}
\author{Alex Fink}
\author{Matt Larson}

\begin{document}

\maketitle

\vspace{-1.5\baselineskip}

\begin{abstract}
We establish strong vanishing theorems for line bundles on wonderful varieties of hyperplane arrangements, and we show that the resulting positivity properties of Euler characteristics extend to all matroids.  We achieve this by showing that every degeneration of a wonderful variety within the permutohedral toric variety is reduced and Cohen--Macaulay.  The same holds for a larger class of subschemes in products of projective lines that we call ``kindred,'' which are characterized by matroidal Hilbert polynomials.  Our results give a new proof of the 20-year-old $f$-vector conjecture of Speyer and resolve the conjecture of Toh\u{a}neanu that higher order Orlik--Terao algebras are Cohen--Macaulay.
\end{abstract}

\section{Introduction}\label{sec:introduction}

For a nonnegative integer $n$, let $[n] = \{1, \dotsc, n\}$.
For a subset $S\subseteq [n]$, let $\be_S := \sum_{i\in S} \be_i 
\in \RR^{n}$ denote the sum of standard basis vectors indexed by $S$.
A \emph{matroid} $\M$ of \emph{rank $r$} on $[n]$ is a nonempty collection $\mathcal B$ of  subsets of ${[n]}$ of cardinality $r$, called the set of \emph{bases} of $\M$, satisfying the property:
\begin{quote}
\text{For every edge of the polytope $P(\M) := \text{the convex hull of } \{\be_B : B\in \mathcal B\} \subset \RR^{n}$},\\
\text{\qquad\qquad\qquad}\text{there is a pair $\{i\neq j\} \subseteq {[n]}$ such that the edge is parallel to $\be_i - \be_j$.}
\end{quote}
The polytope $P(\M)$ is called the \emph{base polytope} of~$\M$.
We say that $\M$ is \emph{loopless} if every $i\in [n]$ is contained in a basis of~$\M$.
We point to \cite{Wel76, Oxl11} as standard references on matroid theory 
and to \cite{GGMS87} for the equivalence of the definition of matroids given here with theirs.

\smallskip
The prototypical example of a matroid is given by a linear subspace $L \subseteq \kk^{[n]}$ over a field $\kk$, which defines a matroid of rank $r=\dim L$ with set of bases
\[
\{B \subseteq {[n]} : \text{the composition of $L \hookrightarrow \kk^{[n]}$ with the projection $\kk^{[n]} \twoheadrightarrow \kk^B$ is an isomorphism}\}.
\]
Matroids that arise in this way are called \emph{realizable matroids}, and they provide an interface between matroid theory and geometry.
A key medium of this interface occurs through the \emph{wonderful variety} $W_L$ of $L\subseteq \kk^{[n]}$, introduced by de Concini and Procesi \cite{dCP95}.
To avoid trivialities, we assume that $L$ is not contained in a coordinate hyperplane of~$\kk^{[n]}$, or, equivalently, that the matroid of $L$ is loopless.
Then $W_L$ is constructed as follows.
Write $\mathbb PV$ for the projective space of lines in a vector space $V$.
Define a rational map 
\[
\phi \colon \PP(\kk^{[n]}) \dashrightarrow \prod_{i\neq j \in [n]} \PP(\kk^{\{i,j\}})=(\PP^1)^{\binom n2},
\]
where each factor $\PP(\kk^{[n]}) \dashrightarrow \PP(\kk^{\{i,j\}})$ is the rational map induced by the coordinate projection $\kk^{[n]} \to \kk^{\{i,j\}}$.
Then the wonderful variety $W_L$ is the closure of $\phi(\PP L)$ in $(\PP^1)^{\binom n2}$, which is smooth and can be described as an iterated blow-up of $\mathbb{P}L$ \cite[Proposition 1.6]{dCP95} (see also Proposition~\ref{prop:strong normality} for the agreement of the definition in~\cite{dCP95} with ours).
In particular $W_L$ is contained in the closure of the image of $\phi$,
which we denote $X_{[n]}$ and call the \emph{permutohedral variety (of dimension $n-1$\/)}.

\smallskip
While almost all matroids are not realizable \cite{Nel18}, geometric properties of realizable matroids sometimes persist for all matroids.
A landmark example arises from the intersection theory of~$W_L$; because it is a smooth projective variety, its intersection numbers display positivity properties. 
This was the template for the development of the Hodge theory of matroids \cite{AHK18},
which established the corresponding positivity properties for Chow rings of matroids.
Here, we prove cohomological vanishing results for line bundles on~$W_L$,
and we show that the positivity properties of sheaf Euler characteristics implied by these vanishing results
extend to all matroids.

\smallskip
The line bundles in question are the pullbacks to $W_L$ of nef line bundles on $X_{[n]}$. 
Because $X_{[n]}$ is a smooth projective toric variety, these have the following combinatorial description;
we point to \cite{CLS11} as a standard reference on toric geometry and adopt its conventions.
Applying to $X_{[n]}$ the standard dictionary between toric nef divisors and polytopes \cite[Chapter 6]{CLS11},
one obtains a correspondence between nef toric divisor classes on~$X_{[n]}$ and \emph{generalized permutohedra}, which are lattice polytopes $P \subset \RR^n$ satisfying the property that every edge of $P$ is parallel to $\be_i - \be_j$ for some $i\neq j \in [n]$ (see \cite[Section 2.7]{BEST23} and \cite{ACEP20} for details of this correspondence).  For example, base polytopes of matroids are exactly generalized permutohedra which are contained in the unit cube.
Let $\mathcal L_P$ denote the corresponding nef line bundle on $X_{[n]}$.
We also write $\mathcal L_P$ for the restriction of the line bundle to $W_L$. 
Nef line bundles on projective toric varieties are basepoint-free, and the complete linear system of $\mathcal{L}_P$ induces a map $f_P \colon X_{[n]} \to \PP(\kk^{P\cap \ZZ^n})$, where the set of lattice points $P \cap \ZZ^n$ is identified with the torus-invariant basis of $H^0(X_{[n]},\mathcal L_P)$ \cite[Chapter 4.3]{CLS11}.

\begin{maintheorem}\label{thm:vanish}
Let $P$ be a generalized permutohedron in $\RR^n$.  
Then:
\begin{enumerate}[label = (\arabic*)]
\item\label{item:vanish} $H^i(W_L, \mathcal{L}_P^{\otimes a}) = 0$ unless either: \textnormal{(i)} $a\geq 0$ and $i = 0$, or  \textnormal{(ii)} $a<0$ and $i = \dim f_P(W_L)$.
\item\label{item:surject} For any $a \ge 0$, the restriction map $H^0(X_{[n]}, \mathcal{L}_P^{\otimes a}) \to H^0(W_L, \mathcal{L}_P^{\otimes a})$ is surjective. 
\item\label{item:push} We have $R{f_P}_* \mathcal{O}_{W_L} = \mathcal{O}_{f_P(W_L)}$. 
\end{enumerate}
\end{maintheorem}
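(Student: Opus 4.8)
\emph{Overall strategy.} All parts are deduced from degenerating $W_L$ inside $X_{[n]}$ and invoking the paper's central technical result that every degeneration of a wonderful variety within the permutohedral variety is reduced and Cohen--Macaulay. Concretely: reduce part (1) for $a\ge 0$ together with part (2) to the special fibre of such a degeneration and settle them there; deduce (3) formally from these; then deduce part (1) for $a<0$ from (3).

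\emph{Reduction to a toric special fibre.} Fix a one-parameter subgroup $\lambda$ of the dense torus $T\subseteq X_{[n]}$ in general position and let $\mathcal W\subseteq X_{[n]}\times\mathbb A^1$ be the closure of $\{(\lambda(t)\cdot x,t):x\in W_L,\ t\neq 0\}$: a flat projective family over $\mathbb A^1$ with general fibre $W_L$ and special fibre $W_0:=\lim_{t\to 0}\lambda(t)\cdot W_L$, a $T$-invariant subscheme of $X_{[n]}$. As $\mathcal W$ is irreducible and flat over a smooth curve, $W_0$ is equidimensional of dimension $d:=\dim W_L$, and by the central technical theorem it is reduced and Cohen--Macaulay, hence a union $W_0=\bigcup_i V(\sigma_i)$ of $d$-dimensional torus-orbit closures. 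Using the exact sequences $0\to\mathcal I_Z\otimes\mathcal L_P^{\otimes a}\to\mathcal L_P^{\otimes a}\to\mathcal L_P^{\otimes a}|_Z\to 0$ on $X_{[n]}$ and the fact that $H^{\ge 1}(X_{[n]},\mathcal L_P^{\otimes a})=0$ for $a\ge 0$ (toric vanishing for nef divisors), one sees that, for $Z=W_L$, part (1) for $a\ge 0$ and the surjectivity in part (2) together amount to the single statement $H^{\ge 1}(X_{[n]},\mathcal I_{W_L}\otimes\mathcal L_P^{\otimes a})=0$. Since $\mathcal L_P$ is pulled back from $X_{[n]}$ it extends over $\mathcal W$, and $\mathcal I_{\mathcal W/X_{[n]}\times\mathbb A^1}$ is $\mathbb A^1$-flat with fibrewise restrictions $\mathcal I_{W_L}$ and $\mathcal I_{W_0}$; so upper semicontinuity of cohomology gives $h^i(X_{[n]},\mathcal I_{W_L}\otimes\mathcal L_P^{\otimes a})\le h^i(X_{[n]},\mathcal I_{W_0}\otimes\mathcal L_P^{\otimes a})$, and it suffices to prove $H^{\ge 1}(X_{[n]},\mathcal I_{W_0}\otimes\mathcal L_P^{\otimes a})=0$ for all $a\ge 0$.

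\emph{The crux: a combinatorial computation on $W_0$.} Each intersection $W_I:=\bigcap_{i\in I}V(\sigma_i)$ is again a torus-orbit closure, hence a complete toric variety on which $\mathcal L_P$ restricts to a nef line bundle, so $H^{>0}(W_I,\mathcal L_P^{\otimes a})=0$ for $a\ge 0$. Therefore the Mayer--Vietoris spectral sequence $E_1^{p,q}=\bigoplus_{|I|=p+1}H^q(W_I,\mathcal L_P^{\otimes a})\Rightarrow H^{p+q}(W_0,\mathcal L_P^{\otimes a})$ collapses to its row $q=0$, and $H^{\bullet}(W_0,\mathcal L_P^{\otimes a})$ becomes the cohomology of a ``polytopal \v{C}ech complex'' assembled from the lattice points of the faces of $aP$ that the $V(\sigma_i)$ carve out. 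The Cohen--Macaulayness of $W_0$, together with the matroidal combinatorics governing how the orbit closures meet, forces this complex to be exact in positive degrees, so $H^{>0}(W_0,\mathcal L_P^{\otimes a})=0$; moreover the restriction $H^0(X_{[n]},\mathcal L_P^{\otimes a})\to H^0(W_0,\mathcal L_P^{\otimes a})$ is surjective, since restriction to each torus-orbit closure $V(\sigma_i)$ is a coordinate projection of the monomial basis of $H^0(X_{[n]},\mathcal L_P^{\otimes a})$. Feeding these two facts and $H^{\ge 1}(X_{[n]},\mathcal L_P^{\otimes a})=0$ into $0\to\mathcal I_{W_0}(a)\to\mathcal O_{X_{[n]}}(a)\to\mathcal O_{W_0}(a)\to 0$ gives $H^{\ge 1}(X_{[n]},\mathcal I_{W_0}\otimes\mathcal L_P^{\otimes a})=0$, as needed. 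Converting ``reduced and Cohen--Macaulay'' into the exactness of this combinatorial complex is the main obstacle.

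\emph{Parts (3) and (1) for $a<0$.} Factor $f_P$ as $X_{[n]}\xrightarrow{\pi}X_P$, the projective toric variety of $P$; here $\mathcal L_P=\pi^*\mathcal O_{X_P}(1)$ with $\mathcal O_{X_P}(1)$ very ample ($P$ being a generalized permutohedron) and $R\pi_*\mathcal O_{X_{[n]}}=\mathcal O_{X_P}$, so $H^0(X_{[n]},\mathcal L_P^{\otimes a})=H^0(X_P,\mathcal O_{X_P}(a))$. Let $Y:=\pi(W_L)=f_P(W_L)$, let $g\colon W_L\to Y$ be the induced morphism, and $A:=\mathcal O_{X_P}(1)|_Y$, ample on $Y$ with $g^*A=\mathcal L_P|_{W_L}$. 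For $a\gg 0$, the projection formula and Serre vanishing give $H^i(W_L,\mathcal L_P^{\otimes a})=H^0(Y,R^ig_*\mathcal O_{W_L}\otimes A^{\otimes a})$, which is $0$ for $i>0$ by the already-proven case $a\ge 0$ of part (1); as a nonzero coherent sheaf on $Y$ acquires sections after a sufficiently ample twist, $R^{>0}g_*\mathcal O_{W_L}=0$. The cokernel $\mathcal Q$ of $\mathcal O_Y\hookrightarrow g_*\mathcal O_{W_L}$ satisfies, for $a\gg 0$, $H^0(Y,\mathcal Q\otimes A^{\otimes a})=\operatorname{coker}\!\bigl(H^0(Y,A^{\otimes a})\to H^0(W_L,\mathcal L_P^{\otimes a})\bigr)$ (Serre vanishing), and this vanishes because $H^0(X_P,\mathcal O_{X_P}(a))\to H^0(Y,A^{\otimes a})$ is surjective for $a\gg 0$ (Serre vanishing for $\mathcal I_{Y/X_P}$) while the composite $H^0(X_P,\mathcal O_{X_P}(a))=H^0(X_{[n]},\mathcal L_P^{\otimes a})\to H^0(W_L,\mathcal L_P^{\otimes a})$ is surjective by part (2) and factors through $H^0(Y,A^{\otimes a})$. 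Thus $\mathcal Q=0$ and $Rg_*\mathcal O_{W_L}=\mathcal O_Y$, which is (3). Finally, $g_*\mathcal O_{W_L}=\mathcal O_Y$ forces $Y$ normal, and $Y$ --- being the image of the smooth variety $W_L$ under a morphism with $Rg_*\mathcal O_{W_L}=\mathcal O_Y$ --- has rational singularities; so in characteristic zero Grauert--Riemenschneider and Kawamata--Viehweg vanishing give $H^{>0}(Y,\omega_Y\otimes A^{\otimes m})=0$ for $m\ge 1$, and Serre duality on the Cohen--Macaulay variety $Y$ then gives $H^i(W_L,\mathcal L_P^{\otimes a})=H^i(Y,A^{\otimes a})=0$ for $a<0$ and $i\neq\dim Y=\dim f_P(W_L)$, completing part (1). (In arbitrary characteristic the case $a<0$ is instead obtained by a dual computation on $W_0$: Serre duality plus the Mayer--Vietoris analysis of $\omega_{W_0}\otimes\mathcal L_P^{\otimes m}$ over the toric pieces, again powered by Cohen--Macaulayness, concentrates $H^{\bullet}(W_L,\mathcal L_P^{\otimes a})$ in degree $\dim f_P(W_L)$.)
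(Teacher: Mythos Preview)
Your overall architecture is reasonable, and your derivation of part~(3) from parts~(1) and~(2) via Serre vanishing is clean and correct. But the proposal has a real gap at exactly the point you flag as ``the main obstacle,'' and the paper's proof takes a different route that avoids it.

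\textbf{The gap.} In the ``crux'' paragraph you assert that Cohen--Macaulayness of $W_0$, together with unspecified ``matroidal combinatorics,'' forces the $q=0$ row of the Mayer--Vietoris spectral sequence to be exact in positive degrees. This is not a standard consequence of Cohen--Macaulayness, and you give no argument. Cohen--Macaulayness of a reduced union of varieties does not by itself imply exactness of the \v Cech complex built from the components and their intersections; one needs control over the combinatorics of the intersection poset (e.g.\ shellability, or a specific acyclicity statement). The same gap undermines your surjectivity claim for~(2) on $W_0$: surjectivity of restriction to each component $V(\sigma_i)$ does not yield surjectivity to the union, since a section of $W_0$ is a compatible tuple and you have not shown that every compatible tuple lifts. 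Finally, your treatment of $a<0$ in positive characteristic is a one-line gesture, and even in characteristic~$0$ your argument lands in degree $\dim Y$ only after establishing that $Y=f_P(W_L)$ is Cohen--Macaulay, which you obtain from rational singularities; in positive characteristic you have no substitute.

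\textbf{How the paper proceeds instead.} The paper reverses your order: it proves~(3) \emph{first}, not from~(1) and~(2), but directly from the ``kindred'' machinery. The embedding $X_{[n]}\hookrightarrow(\mathbb P^1)^{\binom n2}$ makes $W_L$ (and $\indeg_w\M$) kindred, and Proposition~\ref{prop:push} shows that $f_P$ is, locally on $X_P$, a coordinate projection of $(\mathbb P^1)^{\binom n2}$; then Proposition~\ref{prop:projection} gives $Rf_{P*}\mathcal O=\mathcal O_{f_P(\cdot)}$ and that the image is again kindred, hence Cohen--Macaulay. With~(3) in hand, parts~(1) and~(2) are proved on the pushforward $f_P(\indeg_w\M)\subset X_P$: this is a reduced union of torus-invariant subvarieties of a projective toric variety, hence compatibly Frobenius split with respect to an ample divisor, and it is Cohen--Macaulay by the previous step. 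Frobenius splitting then yields both the higher vanishing for $a\ge0$, the surjectivity of restriction, and (via the Cohen--Macaulay dualizing complex) the vanishing for $a<0$ concentrated in degree $\dim f_P(\indeg_w\M)$, uniformly in all characteristics. Upper semicontinuity transfers everything to $W_L$. In short, the paper replaces your Mayer--Vietoris step with Frobenius splitting, and it secures Cohen--Macaulayness of the \emph{image} (not just of $W_0$) through the kindred projection property---this is precisely what your argument is missing.
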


In Section~\ref{sec:examples}, we show that the conclusion of Theorem~\ref{thm:vanish} fails for some other classes of nef or ample line bundles on wonderful varieties.
In this sense, Theorem~\ref{thm:vanish} appears to be sharp.
An explicit combinatorial formula for $\dim f_P(W_L)$ is provided in Theorem~\ref{thm:numericaldim}.  
If $P$ is of the maximal dimension, i.e., ($n-1$)-dimensional, then $\dim f_P(W_L) = \dim W_L = r-1$.  We note the following corollary.

\begin{corollary}\label{cor:ACM}
The section ring $\bigoplus_{a \ge 0} H^0(W_L, \mathcal{L}_P^{\otimes a})$ coincides with the homogeneous coordinate ring of $f_P(W_L)$ in $\PP(\kk^{P \cap \ZZ^n})$, and in particular is generated in degree $1$.  The subvariety $f_P(W_L)$ in $\PP(\kk^{P \cap \ZZ^n})$ is projectively normal and arithmetically Cohen--Macaulay.
\end{corollary}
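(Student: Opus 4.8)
The plan is to derive the corollary from the three statements of Theorem~\ref{thm:vanish} together with standard facts about projectively normal and arithmetically Cohen--Macaulay subvarieties. First I would recall that for a closed subvariety $Y \subseteq \PP^N$ with homogeneous coordinate ring $S_Y = \kk[x_0,\dots,x_N]/I_Y$, one has $S_Y$ equal to the section ring $\bigoplus_{a\geq 0} H^0(Y, \mathcal{O}_Y(a))$ precisely when the restriction maps $H^0(\PP^N,\mathcal{O}(a)) \to H^0(Y,\mathcal{O}_Y(a))$ are surjective for all $a \geq 0$, which is the definition of projective normality for $Y$ that is (linearly) normal; and $Y$ is arithmetically Cohen--Macaulay exactly when this section ring is a Cohen--Macaulay ring. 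So the task is to verify these conditions for $Y = f_P(W_L) \subseteq \PP(\kk^{P\cap\ZZ^n})$, using $\mathcal{O}_Y(1) = \mathcal{O}_Y(\mathcal{L}_P)$.

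The key steps are as follows. (1) By Theorem~\ref{thm:vanish}\ref{item:push}, $R{f_P}_*\mathcal{O}_{W_L} = \mathcal{O}_{f_P(W_L)}$; in particular ${f_P}_*\mathcal{O}_{W_L} = \mathcal{O}_Y$ and the higher direct images vanish. The projection formula then gives $R\Gamma(W_L, \mathcal{L}_P^{\otimes a}) = R\Gamma(Y, \mathcal{O}_Y(a))$ for all $a$, so $H^i(Y,\mathcal{O}_Y(a)) \cong H^i(W_L, \mathcal{L}_P^{\otimes a})$. (2) Combining this with Theorem~\ref{thm:vanish}\ref{item:surject}, the restriction maps $H^0(X_{[n]}, \mathcal{L}_P^{\otimes a}) \to H^0(Y,\mathcal{O}_Y(a))$ are surjective for all $a\geq 0$. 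But $X_{[n]}$ is a projective toric variety and $\mathcal{L}_P$ the line bundle of the generalized permutohedron $P$, so the section ring $\bigoplus_{a\geq 0}H^0(X_{[n]},\mathcal{L}_P^{\otimes a})$ is the semigroup ring of the cone over $P$, and the map from the polynomial ring $\mathrm{Sym}^\bullet H^0(X_{[n]},\mathcal{L}_P)$ — i.e., $\kk[x_v : v\in P\cap\ZZ^n]$ — onto it is surjective in every degree if and only if $P$ is a normal lattice polytope. Generalized permutohedra are normal (this is classical; e.g. they are obtained from simplices by the operations preserving normality, or one cites the standard result that every lattice generalized permutohedron has the integer decomposition property), so $\kk[x_v] \twoheadrightarrow \bigoplus_a H^0(X_{[n]},\mathcal{L}_P^{\otimes a}) \twoheadrightarrow \bigoplus_a H^0(Y,\mathcal{O}_Y(a))$ is surjective in all degrees. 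Hence the section ring $R := \bigoplus_{a\geq 0}H^0(Y,\mathcal{O}_Y(a))$ is generated in degree $1$, it equals the homogeneous coordinate ring $S_Y$ of $Y$ in $\PP(\kk^{P\cap\ZZ^n})$, and $Y$ is projectively normal. (3) It remains to see that $S_Y = R$ is Cohen--Macaulay. Since $\dim Y = \dim f_P(W_L) =: d$ (possibly after noting $R$ has Krull dimension $d+1$), and $R$ is a finitely generated graded domain, arithmetic Cohen--Macaulayness is equivalent to the vanishing of the local cohomology $H^i_{\mathfrak m}(R) = 0$ for $i < d+1$, where $\mathfrak m$ is the irrelevant ideal. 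By the standard identification of local cohomology of a section ring with sheaf cohomology, $H^i_{\mathfrak m}(R)_a = H^{i-1}(Y, \mathcal{O}_Y(a))$ for $i \geq 2$ (with the usual correction in degrees $i=0,1$ coming from $H^0$), so the vanishing of $H^i_{\mathfrak m}(R)$ for $2 \leq i \leq d$ is exactly the vanishing $H^{i-1}(Y,\mathcal{O}_Y(a)) = 0$ for all $a$ and $1\leq i-1 \leq d-1$; for $i = 0,1$ the vanishing $H^0_{\mathfrak m}(R) = H^1_{\mathfrak m}(R) = 0$ is equivalent to $R$ being the full section ring with $R_0 = \kk$, which we have already established (projective normality plus connectedness of $Y$). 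Now by step (1), $H^{j}(Y,\mathcal{O}_Y(a)) \cong H^j(W_L,\mathcal{L}_P^{\otimes a})$, and Theorem~\ref{thm:vanish}\ref{item:vanish} says this vanishes for $1 \leq j \leq d-1$ (the only nonvanishing cohomology in positive degree occurs in degree $j = d$, for $a<0$). This gives precisely the required local cohomology vanishing, so $R$ is Cohen--Macaulay and $Y$ is arithmetically Cohen--Macaulay.

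I expect the main obstacle to be bookkeeping rather than conceptual: carefully matching the graded pieces of local cohomology $H^i_{\mathfrak m}(R)$ with sheaf cohomology $H^{i-1}(Y,\mathcal{O}_Y(a))$ across all twists $a\in\ZZ$ (not just $a\geq 0$), and confirming that the "bad" cohomology allowed by Theorem~\ref{thm:vanish}\ref{item:vanish} — namely $H^{d}(W_L,\mathcal{L}_P^{\otimes a})$ for $a<0$ — lands in the top local cohomology $H^{d+1}_{\mathfrak m}(R)$, which is allowed to be nonzero for a Cohen--Macaulay ring. A secondary point requiring a citation is the normality of lattice generalized permutohedra, used in step (2) to get generation in degree $1$ of the toric section ring; this is well known but should be referenced. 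One should also be slightly careful about the degenerate case where $P$ is not full-dimensional, so that $d = \dim f_P(W_L) < r-1$: the argument above is dimension-agnostic as long as one consistently uses $d$, but it is worth a remark that $f_P(W_L)$ is then a lower-dimensional, still projectively normal and arithmetically Cohen--Macaulay, subvariety of its linear span.
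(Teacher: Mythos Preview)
Your proposal is correct and follows essentially the same approach as the paper's proof: both use part~\ref{item:push} and the projection formula to identify $H^i(f_P(W_L),\mathcal O(a))$ with $H^i(W_L,\mathcal L_P^{\otimes a})$, use part~\ref{item:surject} together with normality of the lattice polytope $P$ (the paper cites this as Proposition~\ref{prop:strong normality}) to obtain surjectivity and generation in degree~$1$, and use part~\ref{item:vanish} to conclude arithmetic Cohen--Macaulayness (the paper cites \cite[Proposition 4.5]{EL} for this last implication, whereas you spell out the local cohomology criterion directly). One small point the paper makes explicit that you glossed over: projective normality requires $f_P(W_L)$ to be normal as a scheme, which the paper deduces from ${f_P}_*\mathcal O_{W_L}\cong\mathcal O_{f_P(W_L)}$ together with the smoothness (hence normality) of~$W_L$.
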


\begin{proof}
Because a generalized permutohedron $P$ is a normal lattice polytope (Proposition~\ref{prop:strong normality}),
the homogeneous coordinate ring of $\PP(\kk^{P \cap \ZZ^n})$ surjects onto the section ring $\bigoplus_{a\geq 0} H^0(X_{[n]},\mathcal L_P^{\otimes a})$.  Therefore \ref{item:surject} implies that the section ring $\bigoplus_{a\geq 0} H^0(W_L, \mathcal L_P^{\otimes a})$ is the homogeneous coordinate ring of the subvariety $f_P(W_L)$ in $\PP(\kk^{P \cap \ZZ^n})$.
Then \ref{item:push} implies that the section ring $\bigoplus_{a\geq 0} H^0(W_L, \mathcal L_P^{\otimes a})$ coincides with the section ring $\bigoplus_{a\geq 0} H^0(f_P(W_L), \mathcal O(a))$ of  $\mathcal O(1)$ on $\PP(\kk^{P \cap \ZZ^n})$.
Because $W_L$ is smooth (and hence normal), the isomorphism $\mathcal O_{f_P(W_L)}\simeq {f_P}_*\mathcal O_{W_L}$ from \ref{item:push} implies that $f_P(W_L)$ is normal, and hence projectively normal. The projection formula and \ref{item:push} imply that the natural map from $H^i(f_P(W_L), \mathcal{O}(a))$ to $H^i(W_L, \mathcal{L}_P^{\otimes a})$ is an isomorphism for all $i$ and $a$. 
The cohomology vanishing in \ref{item:vanish} implies that $f_P(W_L)$ is arithmetically Cohen--Macaulay: see, e.g., \cite[Proposition 4.5]{EL}. 
\end{proof}

Special cases of this corollary recover results or resolve conjectures in the prior literature.
\begin{itemize}
\item When $P$ is the convex hull of $\{\be_S : S\subset [n] \text{ and } |S|= n-1\}$, the homogeneous coordinate ring of $f_P(W_L)$ is known as the \emph{Orlik--Terao algebra} \cite{OrlikTerao,OTAlgebra}, and Corollary~\ref{cor:ACM} recovers a main result of \cite{PS06} that this algebra is Cohen--Macaulay.  

\item More generally, when $P$ is the convex hull of $\{\be_S : S\subset [n] \text{ and } |S|= n-k\}$ for $1\leq k \leq n-1$, the homogeneous coordinate ring of $f_P(W_L)$ is known as the 
\emph{higher order Orlik--Terao algebra}, studied extensively in the commutative algebra literature \cite{OTCM,GHM,Burity}.  In this case, Corollary~\ref{cor:ACM} resolves and generalizes the conjecture of  Toh\u{a}neanu that the higher order Orlik--Terao algebra for $k = 2$ is Cohen--Macaulay \cite{OTafold}.
\item When $P = -P(\M)$, the convex hull of $\{-\be_{B} : B \text{ a basis of the matroid $\M$ of $L$}\}$, the variety $f_P(W_L)$ is known as \emph{Kapranov's visible contours compactification}, appearing in the study of moduli of hyperplane arrangements \cite{Kap93,HKT06,KTChow}.
This variety
is the log-canonical model of a hyperplane arrangement complement associated to the data of $L\subseteq \kk^{[n]}$.
Corollary~\ref{cor:ACM} states that this variety is arithmetically Cohen--Macaulay.
\end{itemize} 

\medskip
We now describe the numerical positivity of sheaf Euler characteristics implied by Theorem~\ref{thm:vanish} that we will establish for all matroids.
Corollary~\ref{cor:ACM} implies that the polynomial $a \mapsto \chi(W_L, \mathcal{L}_P^{\otimes a})$ is the Hilbert function of a graded Cohen--Macaulay algebra which is generated in degree $1$. The Hilbert functions of such algebras were classified by Macaulay: 
for a univariate polynomial $p(a)$ of degree $d$, if one writes 
\[
\sum_{a \ge 0} p(a) t^a = \frac{h_0^* + h_1^*t + \dotsb + h_d^*t^d}{(1 - t)^{d+1}},
\]
then the polynomial $p$ is the Hilbert function of a graded Cohen--Macaulay algebra that is generated in degree $1$ if and only if $(h_0^*, \dotsc, h_d^*)$ is a \emph{Macaulay vector} (also known as an $M$-vector or $O$-sequence).
This means that $h_i^* \ge 0$ for each $i$ and that the sequence $(h_0^*, \dotsc, h_d^*)$ satisfies certain explicit inequalities bounding how quickly it can grow; see \cite[Theorem 4.2.10]{BH93} for details. Note that $(h_0^*, \dotsc, h_d^*)$ are determined by the equation
$$p(a) = \sum_{i=0}^{d} h_i^* \binom{a + d - i}{d}.$$
In particular, we see that $h_d^* = (-1)^d p(-1)$, and that $(-1)^d p(a) \ge 0$ for $a < 0$. 

\medskip
Previous work allows us to state an analogue of the above numerical positivity for all matroids, as we now describe.
The proof of \cite[Corollary 10.6]{BEST23} gave an explicit combinatorial formula for the $K$-class $[\mathcal O_{W_L}]$ in the Grothendieck group $K_\circ(X_{[n]})$, showing that the class depends only on the matroid of $L$.
In particular, for any given $\mathcal L\in \operatorname{Pic}(X_{[n]})$, by the projection formula for $W_L\hookrightarrow X_{[n]}$, the quantity $\chi(W_L, \mathcal L|_{W_L}) = \chi(X_{[n]}, \mathcal O_{W_L} \otimes \mathcal L)$ depends only on the matroid realized by $L$.
This suggested the possibility of generalizing the map $\chi(W_L,{-})$ to a function whose first argument is a (not necessarily realizable) matroid.
The authors of \cite{LLPP24} carried out this program, defining the \emph{matroid $K$-ring} $K(\M)$ of a loopless matroid $\M$, equipped with a surjection $K^\circ(X_{[n]})\twoheadrightarrow K(\M)$
and an Euler characteristic map $\chi(\M,{-}) \colon K(\M)\to\mathbb Z$,
so that, when $L$ realizes $\M$, there is an isomorphism $K(\M)\cong K^\circ(W_L)$ identifying the functions $\chi(\M,{-})$ and $\chi(W_L,{-})$.
Then, 
the authors of \cite{EL} defined the \emph{$h^*$-vector} $h^*(\mathrm{M}, \mathcal{L})$ of a loopless matroid $\mathrm{M}$ and a line bundle $\mathcal L \in \operatorname{Pic}(X_{[n]})$ via the equality
$$\sum_{a \ge 0} \chi(\mathrm{M}, \mathcal{L}^{\otimes a}) t^a = \frac{h_0^*(\M,\mathcal L) + h_1^*(\M,\mathcal L)t + \dotsb + h_d^*(\M,\mathcal L)t^d}{(1 - t)^{d+1}},$$
where $d$ is the degree of the polynomial $a \mapsto \chi(\mathrm{M}, \mathcal{L}^{\otimes a})$.
When $\mathrm{M}$ is realizable and $\mathcal{L}$ is the line bundle associated to a generalized permutohedron, we have explained how Corollary~\ref{cor:ACM} implies that $h^*(\mathrm{M}, \mathcal{L})$ is a Macaulay vector.
We show this holds more generally, proving \cite[Conjecture 4.8]{EL}.

\begin{maintheorem}\label{thm:Macaulay}
Let $\mathrm{M}$ be a loopless matroid on $[n]$, and let $P$ be a generalized permutohedron in $\mathbb{R}^n$. Then, the $h^*$-vector $h^*(\mathrm{M}, \mathcal{L}_P)$ is a Macaulay vector. 
\end{maintheorem}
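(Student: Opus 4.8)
The plan is to deduce Theorem~\ref{thm:Macaulay} from Corollary~\ref{cor:ACM} by replacing the wonderful variety, which exists only in the realizable case, with a subscheme of the permutohedral variety $X_{[n]}$ that models an arbitrary loopless matroid $\M$. Concretely, I want to produce a subscheme $Y \subseteq X_{[n]}$ whose structure sheaf represents the combinatorial class of $\M$: under the surjection $K^\circ(X_{[n]}) \twoheadrightarrow K(\M)$, the class $[\mathcal{O}_Y]$ should map to the distinguished class whose Euler characteristics compute $\chi(\M,{-})$. Given such a $Y$, the projection formula for $Y \hookrightarrow X_{[n]}$ gives $\chi(\M, \mathcal{L}_P^{\otimes a}) = \chi\bigl(X_{[n]}, \mathcal{O}_Y \otimes \mathcal{L}_P^{\otimes a}\bigr) = \chi\bigl(Y, (\mathcal{L}_P|_Y)^{\otimes a}\bigr)$ for every $a \in \ZZ$, so $h^*(\M, \mathcal{L}_P)$ is computed entirely on $Y$. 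The subscheme $Y$ should be sought within the class of \emph{kindred} subschemes of $(\PP^1)^{\binom n2}$ announced in the abstract --- those whose multigraded Hilbert polynomial is the matroidal polynomial of some matroid --- since this class is closed enough (it contains every torus-degeneration of a wonderful variety inside $X_{[n]}$) and ample enough (it should contain a model for \emph{every} matroid, realizable or not) to make the reduction work.

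Granting such a $Y$, the proof then mirrors that of Corollary~\ref{cor:ACM} and requires exactly two inputs about $Y$ in place of $W_L$: \textbf{(i)} $Y$ is reduced and Cohen--Macaulay; and \textbf{(ii)} the three conclusions of Theorem~\ref{thm:vanish} hold with $Y$ in place of $W_L$ --- $H^i(Y, \mathcal{L}_P^{\otimes a}) = 0$ outside the range in \ref{item:vanish}, the restriction $H^0(X_{[n]}, \mathcal{L}_P^{\otimes a}) \to H^0(Y, \mathcal{L}_P^{\otimes a})$ is surjective for $a \ge 0$ (item \ref{item:surject}), and $R{f_P}_* \mathcal{O}_Y = \mathcal{O}_{f_P(Y)}$ (item \ref{item:push}). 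I expect (ii) to be obtained either by flatly degenerating a wonderful variety $W_L$ to $Y$ within $X_{[n]}$ --- the Hilbert polynomial staying constant and matroidal, so that the family stays within the kindred class --- and transporting the statements of Theorem~\ref{thm:vanish} along the degeneration using semicontinuity together with the constancy of the Euler characteristic and the Cohen--Macaulayness of $Y$ to pin down where cohomology can jump; or, alternatively, by arguing directly on $Y$ from its Cohen--Macaulayness and the toric vanishing theorems on the smooth projective toric variety $X_{[n]}$.

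With (i) and (ii) in hand, the argument of Corollary~\ref{cor:ACM} goes through verbatim. Because $P$ is a normal lattice polytope, the homogeneous coordinate ring of $\PP(\kk^{P \cap \ZZ^n})$ surjects onto $\bigoplus_{a \ge 0} H^0(X_{[n]}, \mathcal{L}_P^{\otimes a})$, hence --- using \ref{item:surject} for $Y$ --- onto $R := \bigoplus_{a \ge 0} H^0(Y, \mathcal{L}_P^{\otimes a})$, which is therefore generated in degree $1$; and \ref{item:push} for $Y$ identifies $R$ with the homogeneous coordinate ring of $f_P(Y) \subseteq \PP(\kk^{P \cap \ZZ^n})$ and gives $H^i(f_P(Y), \mathcal{O}(a)) \cong H^i(Y, \mathcal{L}_P^{\otimes a})$ for all $i$ and $a$, so the vanishing in \ref{item:vanish} for $Y$ forces $R$ to be Cohen--Macaulay (as in \cite[Proposition 4.5]{EL}). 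Finally, the vanishing gives $\chi(\M, \mathcal{L}_P^{\otimes a}) = \dim_\kk H^0(Y, \mathcal{L}_P^{\otimes a}) = \dim_\kk R_a$ for all $a \ge 0$, so the polynomial $a \mapsto \chi(\M, \mathcal{L}_P^{\otimes a})$ is the Hilbert function of the graded Cohen--Macaulay algebra $R$, generated in degree $1$ and of Krull dimension $\dim f_P(Y) + 1$. By the definition of the $h^*$-vector, $h^*(\M, \mathcal{L}_P)$ is then exactly the $h$-vector of $R$, and Macaulay's classification of Hilbert functions of Cohen--Macaulay algebras generated in degree $1$ (recalled above) says precisely that such a vector is a Macaulay vector.

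The main obstacle is the construction and analysis of the models $Y$, i.e. the two facts left open above: that \emph{every} loopless matroid --- in particular the non-realizable ones, which are the whole difficulty --- admits a kindred subscheme of $X_{[n]}$ carrying its $K$-class; and that every kindred subscheme (equivalently, I expect, every degeneration of a wonderful variety inside $X_{[n]}$, together with the additional models needed in the non-realizable case) is reduced and Cohen--Macaulay, with the cohomological behaviour in (ii). Both are subtle precisely because the only hypothesis available is the purely numerical one that the multigraded Hilbert polynomial be matroidal: a general torus-degeneration of $W_L$ inside a toric variety, or a general subscheme of $(\PP^1)^{\binom n2}$, is neither reduced nor Cohen--Macaulay, so the proof must extract strong scheme-theoretic structure from this combinatorial constraint. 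This is the technical core of the paper and the part I expect to be hard; the reduction described above is, by comparison, formal.
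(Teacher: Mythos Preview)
Your proposal is correct and follows essentially the same approach as the paper: the subscheme $Y$ you seek is the paper's \emph{tropical initial degeneration} $\indeg_w\M$, a reduced union of torus-invariant strata of $X_{[n]}$ determined by intersecting a generic translate of the Bergman fan $\Sigma_\M$ with the cones of $\Sigma_{[n]}$, and the identification $\chi(\M,{-})=\chi(\indeg_w\M,{-})$ for non-realizable $\M$ is obtained via valuativity. The properties (i) and (ii) you isolate are exactly Corollaries~\ref{cor:CM}, \ref{cor:connected}, \ref{prop:CM} and Proposition~\ref{prop:vanishinM}, and your final paragraph is the paper's proof of Theorem~\ref{thm:Macaulay} essentially verbatim.
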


An explicit combinatorial formula for the degree $d$ of the polynomial $a\mapsto \chi(\M, \mathcal L_P^{\otimes a})$ is given in Theorem~\ref{thm:numericaldim}.
If $P$ is of maximal dimension $(n-1)$,  then $d = \operatorname{rank}(\M) - 1$.

\medskip
A consequence of Theorem~\ref{thm:Macaulay} is a new proof of the 20-year-old $f$-vector conjecture of Speyer \cite{SpeThesis}, as we now explain.
If $P$ is a generalized permutohedron, then the negated polytope $-P = \{-p : p \in P\}$ is as well.
Define the invariant $\omega(\M)\in\mathbb Z$ of a loopless matroid $\M$ of rank~$r$ by
\[
\omega(\M) = (-1)^{r-n + \dim P(\M)}\chi(\M, \mathcal L_{-P(\M)}^{-1}).
\]
The $\omega$ invariant is the leading coefficient of Speyer's invariant $g_{\M}(t)\in\mathbb Z[t]$, which was defined in \cite{Spe09, FS12} in an attempt to bound the complexity of polyhedral subdivisions of the base polytope $P(\M)$ of a matroid $\M$ into base polytopes of matroids.
Such subdivisions had arisen in the study of Grassmannians and moduli spaces of hyperplane arrangements \cite{Laf03, Kap93, HKT06}; their more recent appearances are surveyed in \cite[Section 4.4]{Ard18}. 
Speyer conjectured an upper bound on the number of faces of each dimension of such a subdivision, the titular $f$-vector.
He reduced the conjecture to showing that the coefficients of $g_{\M}(t)$ were of predictable sign
(the manifestation in this context of $K$-theoretic positivity),
and he proved this in \cite{Spe09} for matroids realizable over fields of characteristic zero.
The authors of \cite{FSS} further reduced the conjecture for general matroids to the statement of Corollary~\ref{cor:speyer} below, which was first proved in \cite{BF}.

\begin{corollary} \label{cor:speyer}
For any loopless matroid $\M$, one has
$\omega(\M) \geq 0$.
\end{corollary}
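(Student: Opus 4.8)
The plan is to realize $\omega(\M)$ as the top entry of an $h^*$-vector to which Theorem~\ref{thm:Macaulay} applies. As recalled above, if $P(\M)$ is a generalized permutohedron then so is $-P(\M)$; hence $\mathcal L_{-P(\M)}$ is a well-defined (nef) line bundle class on $X_{[n]}$, and the $h^*$-vector $h^*(\M, \mathcal L_{-P(\M)})$ is defined. Write $p(a) = \chi(\M, \mathcal L_{-P(\M)}^{\otimes a})$ and let $d = \deg p$, so that $h^*(\M, \mathcal L_{-P(\M)}) = (h_0^*, \dotsc, h_d^*)$ by definition. From the expansion $p(a) = \sum_{i=0}^{d} h_i^*(\M, \mathcal L_{-P(\M)}) \binom{a + d - i}{d}$, setting $a = -1$ gives $h_d^*(\M, \mathcal L_{-P(\M)}) = (-1)^d p(-1) = (-1)^d \chi(\M, \mathcal L_{-P(\M)}^{-1})$, exactly as recorded in the discussion preceding Theorem~\ref{thm:Macaulay}.

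Next I would identify the degree $d$ with the exponent $r - n + \dim P(\M)$ in the definition of $\omega(\M)$. This is the single point that requires genuinely new input, namely Theorem~\ref{thm:numericaldim}: applying its combinatorial formula for the degree of $a \mapsto \chi(\M, \mathcal L_Q^{\otimes a})$ to the polytope $Q = -P(\M)$ --- which satisfies $\dim Q = \dim P(\M)$ --- should give $d = r - n + \dim P(\M)$. Two consistency checks support this: when $\dim P(\M) = n-1$ it reads $d = r - 1$, agreeing with the special case quoted after Theorem~\ref{thm:Macaulay}; and when $\M$ is realized by $L$, part~\ref{item:push} of Theorem~\ref{thm:vanish} together with the projection formula shows that $p(a)$ is the Hilbert polynomial of Kapranov's visible contours compactification $f_{-P(\M)}(W_L)$, which has dimension $r - n + \dim P(\M)$.

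Granting the degree identification, the corollary follows formally:
\[
\omega(\M) = (-1)^{\,r - n + \dim P(\M)}\, \chi(\M, \mathcal L_{-P(\M)}^{-1}) = (-1)^d p(-1) = h_d^*(\M, \mathcal L_{-P(\M)}) \ \geq\ 0,
\]
the last inequality because $h^*(\M, \mathcal L_{-P(\M)})$ is a Macaulay vector by Theorem~\ref{thm:Macaulay} and every entry of a Macaulay vector is nonnegative. I expect the only real obstacle to be the degree computation via Theorem~\ref{thm:numericaldim}; the remaining steps are bookkeeping, and --- as recalled in the text above --- the reduction of Speyer's $f$-vector conjecture to exactly this nonnegativity statement was already carried out in \cite{FSS, BF}, so no further information about Speyer's polynomial $g_\M(t)$ enters this argument.
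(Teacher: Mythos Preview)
Your proposal is correct and follows essentially the same route as the paper: identify $\omega(\M)$ with $h_d^*(\M,\mathcal L_{-P(\M)})$ via $(-1)^d p(-1)=h_d^*$, compute $d=r-n+\dim P(\M)$, and conclude nonnegativity from Theorem~\ref{thm:Macaulay}. The only refinement is that the paper invokes the more specific Corollary~\ref{cor:numericaldim coarsening} rather than Theorem~\ref{thm:numericaldim} directly: since $-P(\M)$ and $P(\M)$ induce the same partition of~$[n]$, the hypothesis of that corollary is trivially satisfied and the degree formula $d=r-n+\dim P(\M)$ drops out immediately, so the step you flagged as the ``real obstacle'' is in fact routine.
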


If $\M$ is realizable over a field $\kk$ of characteristic zero and $\dim P(\M) = n-1$, then $\mathcal{L}_P$ is nef and big, so this follows immediately from the Kawamata--Viehweg vanishing theorem. 

\begin{proof}[Proof of Corollary~\ref{cor:speyer}]
Because $P(\M)$ and $-P(\M)$ induce the same partition of~$[n]$ in the sense of Section~\ref{sec:dimensions},
Corollary~\ref{cor:numericaldim coarsening} implies that the polynomial $a\mapsto\chi(\M,\mathcal L_{-P(\M)}^{\otimes a})$ has degree $d:=r-n+\dim P(\M)$.
Then $(-1)^d\chi(\M,\mathcal L_{-P(\M)}^{-1}) = h^*_d(\M,\mathcal L_{-P(\M)})$,
which is nonnegative by Theorem~\ref{thm:Macaulay}.
\end{proof}

We sketch the proof of the main theorems in the case when $\mathcal L_P$ is ample on~$X_{[n]}$.
If one can show that the wonderful variety $W_L$ degenerates to a reduced and Cohen--Macaulay union of torus-invariant strata in $X_{[n]}$, then the vanishing results in Theorem~\ref{thm:vanish} follow from Frobenius splitting techniques.
In fact, we produce a collection of $K$-classes on~$X_{[n]}$ such that every subscheme having one of these $K$-classes is reduced and Cohen--Macaulay: we call these subschemes ``kindred.''
We prove that $W_L$ is kindred. Since $W_L$ always has some torus-invariant degeneration, which shares its $K$-class, Theorem~\ref{thm:vanish} follows. As we have explained, this also gives Theorem~\ref{thm:Macaulay} when $\M$ is realizable.
In this way, we avoid the difficult task of explicitly controlling the combinatorics of degenerations of~$W_L$. 
To extend Theorem~\ref{thm:Macaulay} to all matroids, the task remaining is to obtain a kindred subscheme of~$X_{[n]}$ whose $K$-class is the one appropriate to compute $\chi(\M,{-})$. 
We construct a union of torus-invariant subvarieties with the desired property, which we call the ``tropical initial degeneration of~$\M$,'' using the tropical degeneration formula of Katz \cite[Theorem 10.1]{Kat09}.

\subsection*{Organization}
In Section~\ref{sec:kindred}, we define and exhibit properties of kindred subschemes. There we work in the setting of subschemes of $(\mathbb P^1)^\ell$. Applications to our main theorems are obtained via the embedding $X_{[n]}\hookrightarrow(\mathbb P^1)^{\binom n2}$ and coordinate projections thereof.
In Section~\ref{sec:vanish}, we construct the tropical initial degeneration, relate it to the wonderful variety and to matroid Euler characteristics, and prove our main theorems. 
In Section~\ref{sec:dimensions}, we give a formula for the degree of the polynomial $a\mapsto \chi(\M, \mathcal L_P^{\otimes a})$.
In Section~\ref{sec:discussion}, we collect examples that support the optimality of Theorem~\ref{thm:vanish}, and we establish similar vanishing theorems for close cousins of wonderful varieties.

\subsection*{Previous work}
When $P$ is a Minkowski sum of standard simplices, Theorem~\ref{thm:Macaulay} was proved in \cite{EL}.
However, this class of polytopes does not include many of the most interesting cases, such as the ones listed below Corollary~\ref{cor:ACM}.

In \cite{Ruizhen}, Liu extends the techniques of \cite{BF22} to show that,
if $\operatorname{char}\kk = 0$ and $P$ is a Minkowski sum of matroid polytopes, then
$H^i(W_L, \mathcal L_P) = 0$ for $i > 0$, proving part of Theorem~\ref{thm:vanish}\ref{item:vanish} in this case.

Corollary~\ref{cor:speyer} was also proved in \cite{BF} by a completely different method. The authors control the sign of Euler characteristics of the form $\chi(X_{[n]}, [\bigwedge^i \mathcal{Q}_{\M_1}^{\vee}] \otimes [\bigwedge^j \mathcal{Q}_{\M_2}^{\vee}])$ for a pair of matroids $\M_1, \M_2$ on~$[n]$, where $\mathcal{Q}_{\M}$ is one of the tautological classes of \cite{BEST23}. It follows from \cite[Proposition 5.3]{LLPP24} (see also \cite[Corollary 10.6]{BEST23}) that
$$\chi(\M_1, \mathcal{L}_{-P(\M_2)}^{-1}) = \chi(\M_1, [\textstyle{\bigwedge^{n-r}} \mathcal{Q}_{\M_2}^{\vee}]) = \sum_{i \ge 0} (-1)^i\chi(X_{[n]}, [\textstyle{\bigwedge^i} \mathcal{Q}_{\M_1}^{\vee}] \otimes [\textstyle{\bigwedge^{n-r}} \mathcal{Q}_{\M_2}^{\vee}]).$$
When $\M_1 = \M_2 = \M$, \cite[Lemma 6.2]{FS12} gives that $\chi(X_{[n]}, [\bigwedge^i \mathcal{Q}_{\M}^{\vee}] \otimes [\bigwedge^{n-r} \mathcal{Q}_{\M}^{\vee}]) = 0$ unless $i=r$, and so $\omega(\M) = (-1)^{\dim P(\M) - n}\chi(X_{[n]}, [\bigwedge^{r} \mathcal{Q}_{\M}^{\vee}] \otimes [\bigwedge^{n-r}\mathcal{Q}_{\M}^{\vee}])$. Using this formula, the positivity result \cite[Theorem D]{BF} then implies Corollary~\ref{cor:speyer}. However, the techniques of \cite{BF} cannot be used to prove either Theorem~\ref{thm:vanish} or Theorem~\ref{thm:Macaulay}, nor can the results in this paper be used to deduce the positivity consequences of \cite[Theorem D]{BF}.

\subsection*{Acknowledgments}
We thank Jenia Tevelev and Ronnie Cheng for showing us examples similar to Example~\ref{ex:grid},
Louis Esser for helping us understand the geometry of Example~\ref{ex:fano},
Andy Berget for helpful comments on an earlier draft, and Michel Brion for helpful conversations.
We thank the Mathematischen 
Forschungsinstitut Oberwolfach for hosting the 2025 Toric Geometry workshop where this paper began.
The first author is supported by US NSF DMS-2246518. 
The second author received support from the Engineering and Physical Sciences Research Council [grant number EP/X001229/1],
as well as the Institute for Advanced Study and the Fields Institute for Research in Mathematical Sciences. The third author is supported by the Charles Simonyi Endowment and the Oswald Veblen Fund at the Institute for Advanced Study. 

\section{Kindred subschemes}\label{sec:kindred}

We begin by introducing a remarkable family of connected components of the Hilbert scheme of a product of projective lines. 
Subschemes in these components, which we call kindred subschemes, have a very close relationship to their degenerations. 
Kindred subschemes are a special case of subschemes defined by Cartwright--Sturmfels ideals \cite{CDNG15,CDNG20} (Remark~\ref{rem:CS}). 

A set is \emph{independent} in a matroid $\M$ if it is a subset of a basis of~$\M$.

\begin{definition}\label{def:kindred}
A closed subscheme $X$ of $(\mathbb{P}^1)^{\ell}$ is said to be \emph{kindred} if it is empty, or if there is a matroid $\mathrm{M}$ on $[\ell]$ such that, for all $(a_1, \dotsc, a_{\ell}) \in \mathbb{Z}^{\ell}$, we have
$$\chi(X, \mathcal{O}(a_1, \dotsc, a_{\ell})) = \sum_{I \text{ independent in }\mathrm{M}} \,\, \prod_{i \in I} a_i.$$
\end{definition}

We call the matroid $\mathrm{M}$ appearing in Definition~\ref{def:kindred} the \emph{progenitor matroid} of $X$. 
It is clear that whether a subscheme $X$ is kindred only depends on the class $[\mathcal{O}_X]$ in the Grothendieck group of coherent sheaves $K_{\circ}((\mathbb{P}^1)^{\ell})$. 
The function $(a_1, \dotsc, a_{\ell}) \mapsto \chi(X, \mathcal{O}(a_1, \dotsc, a_{\ell}))$ is a polynomial known as the \emph{Snapper polynomial} of~$X$, after \cite{Sna59}.
It agrees with the $\mathbb{Z}^\ell$-graded Hilbert polynomial of any ideal $I$ determining $X$ in the homogeneous coordinate ring $\kk[(\mathbb{P}^1)^\ell]=\kk[x_1, \dotsc, x_\ell, y_1, \dotsc, y_{\ell}]$, i.e., for $a_1,\ldots,a_\ell$ sufficiently large, we have
\[\chi(X, \mathcal{O}(a_1, \dotsc, a_{\ell})) = \dim_{\kk}(\kk[(\mathbb{P}^1)^\ell]/I)_{(a_1,\ldots,a_\ell)}.\]

Recall that the fundamental class $[Y]$ of a subscheme $Y$ of $(\mathbb{P}^1)^{\ell}$ is the class in $A_{\dim Y}((\mathbb{P}^1)^{\ell})$ given by summing the classes of the top-dimensional irreducible components of $Y$, with coefficient equal to the multiplicity of that component. More generally, if $\mathcal{F}$ is a coherent sheaf on $(\mathbb{P}^1)^{\ell}$, then its fundamental class is the sum of the top-dimensional irreducible components of its support, with coefficient equal to the rank of the sheaf at the generic point of that component. 

For a subset $S$ of $[\ell]$, let $Y_S = \{(x_1, \dotsc, x_{\ell}) : x_i = 0 \text{ if }i \not \in S\} \subset (\mathbb{P}^1)^{\ell}$. Then $\{[Y_S]\}_{S \subseteq [\ell]}$ is a basis for $A_{*}((\mathbb{P}^1)^{\ell})$.
By the next proposition, an application of the Hirzebruch--Riemann--Roch theorem, we can recover the fundamental class of a sheaf from its Snapper polynomial.

\begin{proposition}\label{prop:HRR}\cite[Example 15.2.16 \& Example 15.1.5]{Ful98}
Let $\mathcal{F}$ be a coherent sheaf on $(\mathbb{P}^1)^{\ell}$ whose support has dimension $d$ and whose fundamental class is $\sum_{S \in \binom{[\ell]}{d}} c_S [Y_S]$. Then the polynomial $(a_1, \dotsc, a_{\ell}) \mapsto \chi((\mathbb{P}^1)^{\ell}, \mathcal{F}(a_1, \dotsc, a_{\ell}))$ has degree $d$, and its degree $d$ part is $\sum_{S \in \binom{[\ell]}{d}} c_S \prod_{i \in S} a_i$. 
\end{proposition}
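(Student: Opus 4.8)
The plan is to prove Proposition~\ref{prop:HRR} by applying the Hirzebruch--Riemann--Roch theorem on the smooth projective variety $X := (\PP^1)^\ell$ and extracting the top-degree part of the resulting polynomial. I would first fix notation for the Chow ring: writing $h_i \in A^1(X)$ for the pullback of the point class under the $i$-th projection $X \to \PP^1$, one has $A^*(X) = \ZZ[h_1, \dotsc, h_\ell]/(h_1^2, \dotsc, h_\ell^2)$, and $\int_X$ extracts the coefficient of $h_1 \cdots h_\ell$. The basic bookkeeping input is that $[Y_S] = \prod_{i \notin S} h_i$ (as $Y_S$ is the transverse intersection of the coordinate divisors $\{x_i = 0\}$ for $i \notin S$), so that for subsets $T, S$ with $|T| = |S| = d$ one has $\int_X \bigl(\prod_{i \in T} h_i\bigr)\cdot [Y_S] = \delta_{T,S}$, since in $\ZZ[h_i]/(h_i^2)$ the product $\prod_{i \in T} h_i \cdot \prod_{j \notin S} h_j$ is nonzero only when $T = S$, in which case it is $h_1 \cdots h_\ell$.

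Next I would expand the Euler characteristic. By Hirzebruch--Riemann--Roch, $\chi(X, \mathcal F(a_1, \dotsc, a_\ell)) = \int_X \operatorname{ch}(\mathcal F) \cdot \operatorname{ch}(\mathcal O(a_1, \dotsc, a_\ell)) \cdot \operatorname{td}(T_X)$. Since $\mathcal O(a_1, \dotsc, a_\ell)$ is a line bundle with first Chern class $\sum_i a_i h_i$ and $h_i^2 = 0$, its Chern character is $\prod_{i=1}^{\ell}(1 + a_i h_i)$. Expanding this product, I obtain $\chi(X, \mathcal F(a_1, \dotsc, a_\ell)) = \sum_{T \subseteq [\ell]} \bigl(\prod_{i \in T} a_i\bigr) \cdot \beta_T$, where $\beta_T := \int_X \bigl(\prod_{i \in T} h_i\bigr)\cdot \operatorname{ch}(\mathcal F) \cdot \operatorname{td}(T_X)$; this already exhibits the Euler characteristic as a squarefree polynomial in $a_1, \dotsc, a_\ell$.

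Then comes the dimension count, which is the only place one must be careful. The class $\operatorname{ch}(\mathcal F)$ lies in $\bigoplus_{e \le d} A_e(X)$, and its top-dimensional component equals the fundamental cycle $[\mathcal F] = \sum_S c_S [Y_S]$; this identification of $\operatorname{ch}(\mathcal F)$ with the fundamental class in top dimension is Fulton's Riemann--Roch without denominators, \cite[Example 15.1.5]{Ful98}, and is the one genuine input beyond linear algebra. Meanwhile $\operatorname{td}(T_X) = 1 + (\text{positive-codimension terms})$. Multiplying $\prod_{i \in T} h_i$ (codimension $|T|$) by a dimension-$e$ component of $\operatorname{ch}(\mathcal F)$ (codimension $\ell - e$) and a codimension-$c$ component of $\operatorname{td}(T_X)$ produces a class of codimension $|T| + (\ell - e) + c$, which must equal $\ell$ for a nonzero contribution; hence $|T| = e - c \le d$, with equality forcing $e = d$ and $c = 0$. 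This shows $\beta_T = 0$ for $|T| > d$, so the polynomial has degree at most $d$, and for $|T| = d$ it shows $\beta_T = \int_X \bigl(\prod_{i \in T} h_i\bigr)\cdot [\mathcal F] = c_T$ by the pairing computed in the first step. Thus the degree-$d$ part is $\sum_{T \in \binom{[\ell]}{d}} c_T \prod_{i \in T} a_i$; since $\mathcal F$ has support of dimension exactly $d$ some $c_S$ is nonzero, so this is nonzero and the degree is exactly $d$, completing the proof. I expect the main obstacle, such as it is, to be just this dimension bookkeeping: checking that neither the lower-dimensional pieces of $\operatorname{ch}(\mathcal F)$ nor the higher Todd terms can feed into the degree-$d$ part.
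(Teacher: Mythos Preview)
Your proof is correct and follows exactly the approach the paper indicates by citing \cite[Example 15.2.16 \& Example 15.1.5]{Ful98}: the paper gives no proof of its own beyond this citation, and you have simply written out the Hirzebruch--Riemann--Roch computation that those examples encode. The only minor remark is that the fact ``$\operatorname{ch}(\mathcal F)$ vanishes in dimensions $>d$ and has dimension-$d$ part equal to the fundamental class'' is really a statement about $\tau(\mathcal F)=\operatorname{ch}(\mathcal F)\cdot\operatorname{td}(T_X)$ (Fulton, Theorem 18.3(5)), but since $\operatorname{td}(T_X)=1+(\text{lower dimension})$ this is equivalent to what you wrote.
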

\noindent
Applying Proposition~\ref{prop:HRR} when $\mathcal{F} = \mathcal{O}_X$ for some kindred $X \subseteq (\mathbb{P}^1)^{\ell}$, we have the following formula. 

\begin{corollary}\label{prop:fund}
Let $X$ be a kindred subscheme of $(\mathbb{P}^1)^{\ell}$ with progenitor matroid $\mathrm{M}$. Then we have
$$[X] = \sum_{B \text{ basis of }\mathrm{M}} [Y_B] \in A_{\dim X}((\mathbb{P}^1)^{\ell}).$$
\end{corollary}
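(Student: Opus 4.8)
The plan is to read off the fundamental class directly from the Snapper polynomial by applying Proposition~\ref{prop:HRR} to the structure sheaf $\mathcal{F} = \mathcal{O}_X$. Since $X$ is equipped with a progenitor matroid it is nonempty, so the defining equality of Definition~\ref{def:kindred} gives
\[
\chi(X, \mathcal{O}(a_1,\dotsc,a_\ell)) = \sum_{I \text{ independent in }\mathrm{M}} \,\, \prod_{i\in I} a_i.
\]
First I would determine the degree of this polynomial, and hence $\dim X$. The monomials $\prod_{i\in I}a_i$ over independent sets $I$ are pairwise distinct, and the independent sets of largest cardinality are exactly the bases of~$\mathrm{M}$, all of common size $r := \operatorname{rank}(\mathrm{M})$. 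Thus the Snapper polynomial has degree $r$, with degree-$r$ homogeneous part $\sum_{B \text{ basis of }\mathrm{M}} \prod_{i\in B} a_i$. Since the support of $\mathcal{O}_X$ is $X$ itself, Proposition~\ref{prop:HRR} forces $\dim X = r$.

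Next I would compare top-degree parts. Writing $[X] = \sum_{S \in \binom{[\ell]}{r}} c_S [Y_S]$ in the basis $\{[Y_S]\}_{S \subseteq [\ell]}$ of $A_{*}((\mathbb{P}^1)^\ell)$, Proposition~\ref{prop:HRR} asserts that the degree-$r$ part of the Snapper polynomial of $\mathcal{O}_X$ equals $\sum_{S \in \binom{[\ell]}{r}} c_S \prod_{i\in S} a_i$. The polynomials $\{\prod_{i\in S} a_i : |S| = r\}$ are linearly independent, so matching this against $\sum_{B \text{ basis}} \prod_{i\in B} a_i$ yields $c_S = 1$ when $S$ is a basis of $\mathrm{M}$ and $c_S = 0$ otherwise, which is precisely the claimed identity $[X] = \sum_{B \text{ basis of }\mathrm{M}} [Y_B]$.

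There is no substantive obstacle: the entire content lies in Proposition~\ref{prop:HRR} (a form of Hirzebruch--Riemann--Roch) together with the elementary combinatorial observation that the top-degree monomials of the matroid's independence-generating polynomial are indexed by the bases. The only point deserving a moment of care is confirming that the existence of a progenitor matroid excludes the case $X = \varnothing$: the empty set is independent in every matroid, so the right-hand side of the kindred equality has nonzero constant term and cannot vanish identically. Hence Proposition~\ref{prop:HRR} applies with $\mathcal{F} = \mathcal{O}_X$ and $d = \dim X = r$, as used above.
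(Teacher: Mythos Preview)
Your proof is correct and follows exactly the approach the paper takes: apply Proposition~\ref{prop:HRR} to $\mathcal{F}=\mathcal{O}_X$ and read off the top-degree coefficients of the Snapper polynomial, which are indexed by the bases of~$\mathrm{M}$. The paper states this in a single sentence, while you have spelled out the details (including the nonemptiness check) more carefully.
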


\begin{proposition}\label{prop:matroidexample}
Let $\mathrm{M}$ be a matroid on $[\ell]$, and let $ Y_{\mathrm{M}} = \bigcup_{B \text{ basis}} Y_B$. Then $Y_{\mathrm{M}}$ is a kindred subscheme of $(\mathbb{P}^1)^{\ell}$ with progenitor matroid $\mathrm{M}$. 
\end{proposition}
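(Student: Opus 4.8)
The plan is to work directly in the Cox ring $S = \kk[x_1, \dotsc, x_\ell, y_1, \dotsc, y_\ell]$ of $(\mathbb{P}^1)^\ell$, with $x_i, y_i$ placed in multidegree $\be_i$, and to compute the $\mathbb{Z}^\ell$-graded Hilbert function of an ideal defining $Y_{\mathrm M}$; by the identification recalled just after Definition~\ref{def:kindred}, this Hilbert function equals the Snapper polynomial $(a_1, \dotsc, a_\ell) \mapsto \chi(Y_{\mathrm M}, \mathcal O(a_1, \dotsc, a_\ell))$ for $\mathbf a \gg 0$. Since $Y_B$ is cut out by $\langle x_i : i \notin B\rangle \subseteq S$, the union $Y_{\mathrm M} = \bigcup_{B} Y_B$ is cut out by $I := \bigcap_{B \text{ a basis}} \langle x_i : i \notin B\rangle$; sheafification is exact, hence commutes with this finite intersection, so $I$ is an ideal ``determining $Y_{\mathrm M}$'' in the sense of the excerpt.

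Next I would read off a monomial $\kk$-basis of $S/I$. The ideal $I$ is a monomial ideal, and a monomial $x^c y^d$ (with $c, d \in \mathbb{Z}_{\ge 0}^\ell$) lies in $\langle x_i : i \notin B\rangle$ exactly when $\operatorname{supp}(c) \not\subseteq B$; therefore $x^c y^d \in I$ exactly when $\operatorname{supp}(c)$ is contained in no basis, i.e.\ when $\operatorname{supp}(c)$ is a dependent set of $\mathrm M$. So the monomials $x^c y^d$ with $\operatorname{supp}(c)$ independent form a $\kk$-basis of $S/I$. Fixing a multidegree $\mathbf a = (a_1, \dotsc, a_\ell) \in \mathbb{Z}_{\ge 0}^\ell$ and grouping these basis monomials by the independent set $J := \operatorname{supp}(c)$, one needs $c_i \in \{1, \dotsc, a_i\}$ (and $d_i = a_i - c_i$) for $i \in J$ and $c_i = 0$, $d_i = a_i$ for $i \notin J$, which gives $\prod_{i \in J} a_i$ monomials — with the usual convention that the empty product is $1$, and observing that a vanishing factor $a_i = 0$ correctly kills every $J$ containing $i$. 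Summing over the independent sets $J$ of $\mathrm M$ gives $\dim_\kk (S/I)_{\mathbf a} = \sum_{J \text{ independent}} \prod_{i \in J} a_i$ for all $\mathbf a \in \mathbb{Z}_{\ge 0}^\ell$.

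To conclude: the displayed Hilbert function agrees with the Snapper polynomial of $Y_{\mathrm M}$ for $\mathbf a \gg 0$, and both are polynomials in $\mathbf a$, so they agree as polynomials and hence for all $\mathbf a \in \mathbb{Z}^\ell$; since $\mathrm M$ has at least one basis, $Y_{\mathrm M}$ is nonempty, and comparison with Definition~\ref{def:kindred} shows that $Y_{\mathrm M}$ is kindred with progenitor matroid $\mathrm M$.

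I do not anticipate a serious obstacle: once the defining ideal is pinned down the argument is a bookkeeping exercise. The two points needing (standard) care are the claim that $I = \bigcap_B \langle x_i : i \notin B\rangle$ genuinely cuts out $Y_{\mathrm M}$ rather than a thickening of it, and the passage from agreement of the Hilbert function with $\sum_J \prod_{i \in J} a_i$ in high multidegree to equality of polynomials — which is exactly the Snapper-polynomial remark following Definition~\ref{def:kindred}. An alternative to the monomial count is an inclusion--exclusion over bases, using that $\bigcap_{B \in \mathcal C} Y_B = Y_{\bigcap_{B \in \mathcal C} B}$, that $\chi(Y_T, \mathcal O(\mathbf a)) = \prod_{i \in T}(a_i + 1)$, and that $\sum_{\emptyset \neq \mathcal C} (-1)^{|\mathcal C| + 1} = 1$ when $\mathcal C$ ranges over nonempty families of bases containing a fixed independent set (and the sum is empty otherwise); but the direct basis computation sidesteps having to verify exactness of the corresponding \v{C}ech-type complex.
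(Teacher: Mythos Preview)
Your proposal is correct and follows essentially the same approach as the paper: compute the multigraded Hilbert function of the defining monomial ideal by exhibiting an explicit monomial basis indexed by independent sets, then invoke the Snapper-polynomial identification for $\mathbf a\gg 0$. The only cosmetic difference is that the paper writes the ideal directly as $\langle \prod_{i\in D} y_i : D\text{ dependent}\rangle$ (with the roles of $x_i$ and $y_i$ swapped relative to yours), whereas you obtain it as the intersection $\bigcap_B \langle x_i : i\notin B\rangle$ and then characterize its monomials; these are the same computation.
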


\begin{proof}
This is a special case of \cite[Proposition 2.14]{EL}, but we sketch an easy direct proof.
The homogeneous coordinate ring of $Y_{\mathrm{M}}$ is the quotient of $\kk[x_1, \dotsc, x_{\ell}, y_{1}, \dotsc, y_{\ell}]$ by the ideal $\langle \prod_{i \in D} y_i : D\subseteq [\ell] \text{ not independent in }\M\rangle$.
The $\mathbb{Z}^{\ell}$-graded Hilbert function of this ring is given by the formula in Definition~\ref{def:kindred} since, for $\boldsymbol a\in \ZZ_{\geq 0}^\ell$, the set $\{\prod_{i\in [\ell]} x_i^{a_i - b_i}y_i^{b_i} : \{i : b_i >0\} \text{ is independent in $\M$}\}$ forms a monomial basis for the degree $\boldsymbol a$ component of the quotient ring.
Because the Hilbert function agrees with the Snapper polynomial (i.e., the multigraded Hilbert polynomial) on points of $\mathbb{Z}^{\ell}$ which are sufficiently deep in the positive orthant, this implies the result. 
\end{proof}

\begin{proposition}\label{prop:kindredprops}
Let $X$ be a kindred subscheme of $(\mathbb{P}^1)^{\ell}$ with progenitor matroid $\mathrm{M}$. Then $X$ degenerates to $Y_{\mathrm{M}}$ inside of $(\mathbb{P}^1)^{\ell}$.  
\end{proposition}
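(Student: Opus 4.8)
The plan is to exhibit an explicit flat degeneration of $X$ to $Y_{\mathrm{M}}$ by using a generic one-parameter subgroup of the torus acting on $(\mathbb{P}^1)^{\ell}$ and analyzing the resulting initial scheme via its $K$-class. First I would fix an ideal $I$ defining $X$ in $\kk[x_1,\dotsc,x_\ell,y_1,\dotsc,y_\ell]$ and choose a generic cocharacter (equivalently, a generic term order refining the $\ZZ^\ell$-grading) to form the initial ideal $\operatorname{in}(I)$; the associated subscheme $X_0 = V(\operatorname{in}(I))$ is a flat limit of $X$ inside $(\mathbb{P}^1)^\ell$ under the induced $\mathbb{G}_m$-action, so $[\mathcal{O}_{X_0}] = [\mathcal{O}_X]$ in $K_\circ((\mathbb{P}^1)^\ell)$. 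In particular $X_0$ is again kindred with the same progenitor matroid $\mathrm{M}$, and by Corollary~\ref{prop:fund} its fundamental class is $[X_0] = \sum_{B \text{ basis of }\mathrm{M}} [Y_B]$.

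The key step is to show that $\operatorname{in}(I)$ can be taken to be a squarefree monomial ideal. This is precisely the content of the Cartwright--Sturmfels theory invoked in Remark~\ref{rem:CS}: ideals whose multigraded Hilbert polynomial is of the kindred form lie in a connected component of the multigraded Hilbert scheme on which a generic initial ideal is radical (indeed, defines a reduced union of coordinate subspaces). So I would cite this to conclude that, after a generic choice of cocharacter, $\operatorname{in}(I)$ is the Stanley--Reisner ideal of a simplicial complex $\Delta$ on the $y$-variables, i.e. $X_0 = \bigcup_{S \in \Delta} Y_S$ set-theoretically and scheme-theoretically (with appropriate multiplicities). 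Then $[\mathcal{O}_{X_0}] = \sum_{S \in \Delta} [\mathcal{O}_{Y_S}]$ together with the expansion of each $[\mathcal{O}_{Y_S}]$ in the $[Y_T]$ basis forces, upon comparing with $[X_0] = \sum_{B} [Y_B]$ and using that the $[Y_S]$ for $S$ of varying cardinality are linearly independent in $K_\circ$, that $\Delta$ has facets exactly the bases of $\mathrm{M}$ and that $X_0$ is reduced. Hence $X_0 = Y_{\mathrm{M}}$, and concatenating the two degenerations ($X \rightsquigarrow X_0 = Y_{\mathrm{M}}$) gives the claim.

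The main obstacle is the bookkeeping needed to pass from "$\operatorname{in}(I)$ is a squarefree monomial ideal with the right multigraded Hilbert polynomial" to "$\operatorname{in}(I)$ is exactly the Stanley--Reisner ideal of the independence complex of $\mathrm{M}$." The Snapper polynomial only records leading-order data directly, so I need to leverage that kindredness pins down the $K$-class on the nose, not just the fundamental class, and that among squarefree monomial ideals the $K$-class determines the ideal. Concretely, $[\mathcal{O}_{\kk[\mathbf{x},\mathbf{y}]/J}]$ for a Stanley--Reisner ideal $J = I_\Delta$ is $\sum_{S \in \Delta}(-1)^{?}\cdots$—an inclusion-exclusion over faces—and one must check this assignment $\Delta \mapsto [\mathcal{O}_{X_0}]$ is injective, so that matching the $K$-class of $Y_{\mathrm{M}}$ (computed in Proposition~\ref{prop:matroidexample}) yields $\Delta = \{I : I \text{ independent in }\mathrm{M}\}$. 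This is a finite combinatorial linear-algebra check, but it is where care is required; everything else is a routine appeal to semicontinuity and the cited Cartwright--Sturmfels results.
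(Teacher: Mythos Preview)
Your argument has a genuine circularity at its core. You invoke Remark~\ref{rem:CS} (or the Cartwright--Sturmfels papers it points to) to conclude that the generic initial ideal of $I$ is radical. But the paper is explicit that the implication runs the other way: Remark~\ref{rem:CS} says the Cartwright--Sturmfels property of kindred ideals is a \emph{consequence} of Proposition~\ref{prop:kindredprops} together with Proposition~\ref{prop:kindredvanishing}. So you are assuming what you are trying to prove. If instead you mean to cite \cite{CDNG15,CDNG20} directly, the relevant criterion there is stated for the multigraded Hilbert \emph{function}: one must exhibit a radical Borel-fixed ideal with the same Hilbert function as $I$. You know the Snapper polynomial of $X$, but you do not yet know that the Hilbert function of the saturated ideal of $X$ agrees with it in all nonnegative multidegrees; that agreement is exactly what Corollary~\ref{prop:kindredvanishing} supplies, and that corollary is downstream of the proposition you are proving. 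So the ``key step'' you describe as routine is in fact the whole content.

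The paper's proof sidesteps this entirely. Rather than taking a toric initial degeneration and then arguing radicality, it acts by the full Borel subgroup $B$ (the $\ell$-fold product of lower-triangular groups) on the Hilbert scheme and uses the Borel fixed point theorem to land on a $B$-fixed limit $Z$. The only $B$-fixed \emph{subvarieties} are the $Y_S$, so $Z^{\mathrm{red}}$ is a union of these; Corollary~\ref{prop:fund} on the fundamental class forces $Y_{\mathrm M}\subseteq Z$; and then the equality $[\mathcal O_Z]=[\mathcal O_X]=[\mathcal O_{Y_{\mathrm M}}]$ in $K_\circ$ (the last from Proposition~\ref{prop:matroidexample}) forces $Z=Y_{\mathrm M}$, since a closed subscheme containing $Y_{\mathrm M}$ with the same $K$-class must equal it. This argument never needs to know that the limit is defined by a monomial ideal, let alone a squarefree one, and it never needs the Hilbert function of $X$; the $K$-class alone does all the work. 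Your ``main obstacle'' paragraph is thus aimed at the wrong place: the injectivity of $\Delta\mapsto[\mathcal O_{Y_\Delta}]$ is easy, but getting to a squarefree $\operatorname{in}(I)$ without circularity is not, and the paper's containment-plus-$K$-class trick is precisely what replaces it.
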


\begin{proof}
Let $B$ be the $\ell$-fold product of the group of $2 \times 2$ lower triangular matrices acting factorwise on $(\mathbb{P}^1)^{\ell}$, with unique fixed point given by $(0, \dotsc, 0)$. Then $B$ acts on the Hilbert scheme of $(\mathbb{P}^1)^{\ell}$. Because $B$ is solvable, by the Borel fixed point theorem, the $B$-orbit of the point corresponding to $X$ in the Hilbert scheme has a $B$-fixed point in its closure. This gives a degeneration of $X$ to a subscheme $Z$ of $(\mathbb{P}^1)^{\ell}$ which is fixed by $B$. 

We have $[\mathcal{O}_X] = [\mathcal{O}_Z]$ in $K_{\circ}((\mathbb{P}^1)^{\ell})$, and therefore $[X] = [Z] \in A_{\dim X}((\mathbb{P}^1)^{\ell})$.  The reduction $Z^{\mathrm{red}}$ is a union of $B$-fixed subvarieties of $(\mathbb{P}^1)^{\ell}$. The only $B$-fixed subvarieties of $(\mathbb{P}^1)^{\ell}$ are the $Y_S$, for various $S$. 
These have linearly independent classes in $A_{\dim X}((\mathbb{P}^1)^{\ell})$,
so in order to have $[X] = [Z]$, Corollary~\ref{prop:fund} shows that $Z$ must contain $Y_{\mathrm{M}}$, where $\mathrm{M}$ is the progenitor matroid of~$X$. 
By Proposition~\ref{prop:matroidexample}, $Y_{\mathrm{M}}$ is kindred, and so $[\mathcal{O}_{Y_\M}] = [\mathcal{O}_X] = [\mathcal{O}_Z]$. In particular, $Y_{\mathrm{M}} = Z$. 
\end{proof}

\begin{corollary}\label{cor:CM}
Let $X$ be a kindred subscheme of $(\mathbb{P}^1)^{\ell}$. Then $X$ is Cohen--Macaulay, pure dimensional, geometrically reduced, and geometrically connected. 
\end{corollary}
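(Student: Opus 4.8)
The plan is to transport all four properties from the monomial subscheme $Y_{\mathrm M}$ to $X$ using the degeneration furnished by Proposition~\ref{prop:kindredprops}, together with the standard principle that Cohen--Macaulayness, purity, reducedness, and connectedness are preserved under specialization in a flat family (in the direction from general to special fiber) or rather deformed in the reverse direction. Concretely, if $X$ is empty the statement is vacuous, so assume $X$ has progenitor matroid $\mathrm M$. Proposition~\ref{prop:kindredprops} gives a flat degeneration of $X$ to $Y_{\mathrm M}$ inside $(\mathbb P^1)^\ell$, i.e.\ a flat family over a smooth curve (or $\mathbb A^1$) whose generic fiber is $X$ and whose special fiber is $Y_{\mathrm M}$. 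Since $Y_{\mathrm M}$ is a reduced union of the coordinate subspaces $Y_B$ and in particular all of its components have the same dimension $\operatorname{rk}(\mathrm M)$, it is pure dimensional, and one checks directly that its coordinate ring (the quotient of $\kk[x_i,y_i]$ by $\langle \prod_{i\in D} y_i : D \text{ dependent}\rangle$, as in the proof of Proposition~\ref{prop:matroidexample}) is Cohen--Macaulay: this Stanley--Reisner-type ideal corresponds to a shellable or at least Cohen--Macaulay simplicial complex built from the independence complex of $\mathrm M$, which is itself Cohen--Macaulay (indeed shellable) by a theorem of Provan--Billera. Reducedness and geometric reducedness of $Y_{\mathrm M}$ are immediate since the defining ideal is a squarefree monomial ideal, hence radical over any field. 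Geometric connectedness follows because every $Y_B$ contains the origin $(0,\dotsc,0)$, so the union is connected, and this persists after base change to $\bar\kk$.

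With the special fiber understood, I would invoke the semicontinuity/openness results for families of schemes. Flatness over a regular one-dimensional base, together with the special fiber $Y_{\mathrm M}$ being Cohen--Macaulay, forces the total space to be Cohen--Macaulay in a neighborhood of the special fiber, and then the generic fiber $X$ is Cohen--Macaulay as well (a fiber of a flat morphism with Cohen--Macaulay total space and regular base is Cohen--Macaulay); alternatively one uses that the non-Cohen--Macaulay locus is closed and does not meet the special fiber, hence misses the generic fiber too. Purity of dimension transfers because the Hilbert polynomial is constant in a flat family, so $\dim X = \dim Y_{\mathrm M} = \operatorname{rk}(\mathrm M)$, and an embedded or lower-dimensional component of $X$ would force a corresponding defect in the limit, contradicting purity of $Y_{\mathrm M}$; more cleanly, a Cohen--Macaulay scheme is automatically equidimensional once we know it is connected. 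For geometric reducedness: the locus where the fibers of a flat family are geometrically reduced is open on the base, and it contains the special fiber $Y_{\mathrm M}$, hence the generic point, so $X$ is geometrically reduced. For geometric connectedness: by the theorem on connectedness of fibers (Zariski's connectedness principle, or the fact that $h^0(\mathcal O)$ is upper semicontinuous combined with $h^0(\mathcal O_{Y_{\mathrm M}}) = 1$ — which itself follows from the Snapper polynomial evaluated appropriately, or from the explicit coordinate ring), the generic fiber is geometrically connected as well; over $\bar\kk$ one likewise gets $h^0(\mathcal O_{X_{\bar\kk}})=1$.

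The main obstacle, and the only point requiring genuine input rather than bookkeeping, is establishing that $Y_{\mathrm M}$ itself is Cohen--Macaulay. This is where the combinatorics of matroids enters essentially: the key fact is that the independence complex of a matroid is shellable (equivalently, pure shellable, since all maximal independent sets are bases), a classical result of Provan and Billera, which implies that the associated Stanley--Reisner ring — and hence, after the toric/multigraded translation to $(\mathbb P^1)^\ell$, the coordinate ring of $Y_{\mathrm M}$ — is Cohen--Macaulay by Reisner's criterion. Everything else is a standard application of semicontinuity theorems for flat families (Cohen--Macaulayness, geometric reducedness, geometric connectedness, and constancy of Hilbert polynomial), for which I would cite the relevant statements in the literature rather than reprove them. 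A minor technical point to handle carefully is that the degeneration in Proposition~\ref{prop:kindredprops} is stated set-theoretically as ``$X$ degenerates to $Y_{\mathrm M}$''; I should make sure this is a genuine flat family with reduced special fiber $Y_{\mathrm M}$ (not merely $Y_{\mathrm M}^{\mathrm{red}}$), which indeed it is because $[\mathcal O_{Y_{\mathrm M}}] = [\mathcal O_X] = [\mathcal O_Z]$ in $K_\circ$ pins down the scheme structure, as shown in the proof of that proposition.
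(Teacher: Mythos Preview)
Your proposal is correct and follows essentially the same approach as the paper: verify the four properties for $Y_{\mathrm M}$ directly (with Cohen--Macaulayness coming from the Provan--Billera result on matroid independence complexes), then transfer them to $X$ via the flat degeneration of Proposition~\ref{prop:kindredprops} using standard openness/semicontinuity in flat families. The paper's proof is simply a terser version of what you wrote, citing Stacks Project tags for each transfer step rather than spelling out the arguments.
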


\begin{proof}
Let $\mathrm{M}$ be the progenitor matroid of $X$. It is clear that $Y_{\mathrm{M}}$ is pure dimensional, geometrically reduced, and geometrically connected.
Furthermore, it is known that $Y_{\mathrm{M}}$ is Cohen--Macaulay: the multihomogeneous coordinate ring of $Y_\M$ is equal to (the extension by extra variables of) the Stanley--Reisner ring of the matroid independence complex, which is Cohen--Macaulay \cite[Theorem 3.2.1]{PB80}.
Therefore the same properties are true for $X$ \cite[\href{https://stacks.math.columbia.edu/tag/045U}{Tag 045U}, \href{https://stacks.math.columbia.edu/tag/02NM}{Tag 02NM}, \href{https://stacks.math.columbia.edu/tag/0C0D}{Tag 0C0D}, \href{https://stacks.math.columbia.edu/tag/055J}{Tag 055J}]{stacks-project}.
\end{proof}

\begin{corollary}\label{prop:kindredvanishing}
Let $X$ be a kindred subscheme of $(\mathbb{P}^1)^{\ell}$. Then, for any $a_1, \dotsc, a_{\ell} \ge 0$, we have 
$$H^i(X, \mathcal{O}(a_1, \dotsc, a_{\ell})) = 0 \text{ for }i > 0,$$
and the restriction map 
$$H^0((\mathbb{P}^1)^{\ell}, \mathcal{O}(a_1, \dotsc, a_{\ell})) \to H^0(X, \mathcal{O}(a_1, \dotsc, a_{\ell}))$$ 
is surjective. 
For any $a_1, \dotsc, a_{\ell} < 0$, we have
$$H^i(X, \mathcal{O}(a_1, \dotsc, a_{\ell})) = 0 \text{ for }i < \dim X.$$
\end{corollary}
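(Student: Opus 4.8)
The plan is to reduce the statement, for arbitrary kindred $X$, to the single scheme $Y_{\M}$ of its progenitor matroid $\M$, and then to prove it for $Y_{\M}$ by induction along a shelling of the matroid independence complex.

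\emph{Reduction to $Y_{\M}$.} Proposition~\ref{prop:kindredprops} gives a flat degeneration of $X$ to $Y_{\M}$ inside $(\mathbb{P}^1)^{\ell}$: restricting the universal family over the Hilbert scheme to the normalization of a curve in $\overline{B\cdot[X]}$ joining $[X]$ to $[Y_{\M}]$ produces a flat family $\mathcal{X}\to T$ over a smooth affine curve with special fiber $Y_{\M}$ and general fiber a translate $g\cdot X$ with $g\in(\mathrm{PGL}_2)^{\ell}$. As $(\mathrm{PGL}_2)^{\ell}$ acts trivially on $\operatorname{Pic}((\mathbb{P}^1)^{\ell})$, both $\mathcal{O}(\mathbf{a})|_{g\cdot X}$ and $\mathcal{I}_{g\cdot X}(\mathbf{a})$ have the same cohomology as their counterparts for $X$, and $\mathcal{I}_{\mathcal{X}}$ is flat over $T$ since $\mathcal{O}_{\mathcal{X}}$ is. Upper semicontinuity of fibrewise cohomology then gives $h^i(X,\mathcal{O}(\mathbf{a}))\le h^i(Y_{\M},\mathcal{O}(\mathbf{a}))$ and $h^1((\mathbb{P}^1)^{\ell},\mathcal{I}_X(\mathbf{a}))\le h^1((\mathbb{P}^1)^{\ell},\mathcal{I}_{Y_{\M}}(\mathbf{a}))$. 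Since $\dim X=\dim Y_{\M}=\rk(\M)$, and since for $\mathbf{a}\ge 0$ the surjectivity of $H^0((\mathbb{P}^1)^{\ell},\mathcal{O}(\mathbf{a}))\to H^0(X,\mathcal{O}(\mathbf{a}))$ is equivalent to $H^1((\mathbb{P}^1)^{\ell},\mathcal{I}_X(\mathbf{a}))=0$ (using $H^1((\mathbb{P}^1)^{\ell},\mathcal{O}(\mathbf{a}))=0$), all three assertions for $X$ follow from the same three assertions for $Y_{\M}$. No control of Euler characteristics enters.

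\emph{Setup for $Y_{\M}$.} Matroid independence complexes are shellable; fix a shelling $B_1,\dots,B_t$ of the bases and put $Z_k=Y_{B_1}\cup\dots\cup Y_{B_k}$, so $Z_t=Y_{\M}$. The shelling property identifies the scheme-theoretic intersection $Z_{k-1}\cap Y_{B_k}$ with the reduced union $\bigcup_{v\in\mathcal{R}_k}\{x_v=0\}$ of coordinate divisors inside $Y_{B_k}\cong(\mathbb{P}^1)^{B_k}$, for a nonempty restriction set $\mathcal{R}_k\subseteq B_k$; hence, $(\mathbb{P}^1)^{B_k}$ being locally factorial, this intersection has ideal sheaf $\mathcal{O}(-\sum_{v\in\mathcal{R}_k}\be_v)$. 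One induces on $k$ using the two Mayer--Vietoris sequences, twisted by $\mathcal{O}(\mathbf{a})$,
$$0\to\mathcal{O}_{Z_k}\to\mathcal{O}_{Z_{k-1}}\oplus\mathcal{O}_{Y_{B_k}}\to\mathcal{O}_{Z_{k-1}\cap Y_{B_k}}\to 0 \quad\text{and}\quad 0\to\mathcal{I}_{Z_k}\to\mathcal{I}_{Z_{k-1}}\oplus\mathcal{I}_{Y_{B_k}}\to\mathcal{I}_{Z_{k-1}\cap Y_{B_k}}\to 0,$$
whose exactness (and the identification of the intersection scheme) rests on the distributivity of the lattice of monomial ideals. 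The one local input one needs is that $H^{\bullet}(\mathbb{P}^1,\mathcal{O}(a))$ lives in degree $0$ when $a\ge 0$, vanishes when $a=-1$, and lives in degree $1$ when $a\le-1$; by Künneth, $H^{\bullet}((\mathbb{P}^1)^{B_k},\mathcal{O}(\mathbf{a}-\sum_{v\in\mathcal{R}_k}\be_v))$ then vanishes outright when $\mathbf{a}\ge 0$ and is concentrated in degree $\rk(\M)$ when $\mathbf{a}<0$, and the same holds for $\mathcal{O}(\mathbf{a})$ on $(\mathbb{P}^1)^{B_k}$.

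\emph{Running the inductions.} For $\mathbf{a}\ge 0$: the vanishing above yields $H^{>0}(Z_{k-1}\cap Y_{B_k},\mathcal{O}(\mathbf{a}))=0$ and the surjectivity of $H^0(Y_{B_k},\mathcal{O}(\mathbf{a}))\to H^0(Z_{k-1}\cap Y_{B_k},\mathcal{O}(\mathbf{a}))$, which, combined with the inductive hypothesis and Künneth on $Y_{B_k}$ in the first sequence, gives $H^{>0}(Z_k,\mathcal{O}(\mathbf{a}))=0$. For the restriction surjectivity one uses the second sequence: every monomial ideal $I_{Z_j}$, $I_{Y_{B_j}}$, $I_{Z_{k-1}\cap Y_{B_k}}$ is radical with no associated prime containing the multigraded irrelevant ideal, so that $H^0((\mathbb{P}^1)^{\ell},\widetilde{I}(\mathbf{a}))=I_{\mathbf{a}}$ for every $\mathbf{a}$; using the trivial identity $(I+J)_{\mathbf{a}}=I_{\mathbf{a}}+J_{\mathbf{a}}$ together with monomial distributivity, the connecting map $H^0(\mathcal{I}_{Z_{k-1}\cap Y_{B_k}}(\mathbf{a}))\to H^1(\mathcal{I}_{Z_k}(\mathbf{a}))$ is zero, whence $H^1(\mathcal{I}_{Z_k}(\mathbf{a}))\hookrightarrow H^1(\mathcal{I}_{Z_{k-1}}(\mathbf{a}))\oplus H^1(\mathcal{I}_{Y_{B_k}}(\mathbf{a}))=0$ (the base case $H^1(\mathcal{I}_{Y_B}(\mathbf{a}))=0$ being Künneth). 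For $\mathbf{a}<0$: top-degree concentration gives $H^i(Z_{k-1}\cap Y_{B_k},\mathcal{O}(\mathbf{a}))=0$ for $i<\rk(\M)-1$ and $H^i(Y_{B_k},\mathcal{O}(\mathbf{a}))=0$ for $i<\rk(\M)$, and the first sequence then propagates $H^i(Z_k,\mathcal{O}(\mathbf{a}))=0$ for $i<\rk(\M)=\dim Z_k$. Taking $k=t$ completes $Y_{\M}$, and hence the corollary.

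\emph{Main obstacle, and an alternative.} I expect the delicate point to be propagating the surjectivity through the $\mathbf{a}\ge 0$ induction: the connecting map above is not visibly zero, and confirming that it is uses both that these squarefree monomial ideals have no irrelevant associated prime (so their ideal sheaves have the expected global sections even though the quotient rings need not be saturated) and the easily missed exactness of passing to a fixed multidegree on a sum of graded submodules. A less combinatorial route for the step $X=Y_{\M}$ is to read the cohomology of $\mathcal{O}(\mathbf{a})$ on the multiprojective Stanley--Reisner scheme $Y_{\M}$ off Hochster-type formulas for the local cohomology of its coordinate ring, which is Cohen--Macaulay by Corollary~\ref{cor:CM}, using that links of faces of a matroid complex are again matroid complexes; there the cases $\mathbf{a}\ge 0$ and $\mathbf{a}<0$ correspond to the positive- and negative-orthant behaviour of these local cohomology modules.
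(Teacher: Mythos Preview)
Your argument is correct and takes a genuinely different route from the paper's. For the step $X = Y_{\M}$, the paper observes that $Y_{\M}$ is a union of Schubert varieties in $(\mathbb{P}^1)^{\ell}$ regarded as a homogeneous space for $(GL_2)^{\ell}$, and invokes compatible Frobenius splitting relative to an ample divisor, together with Cohen--Macaulayness of~$Y_{\M}$, to obtain the vanishing and surjectivity in positive characteristic, then transfers to characteristic~$0$ by semicontinuity. You instead exploit the shellability of the matroid independence complex to run an explicit Mayer--Vietoris induction built entirely from K\"unneth computations on products of~$\mathbb{P}^1$. The paper's route is shorter and parallels the Frobenius-splitting argument later used for $\indeg_w\M$ inside $X_P$; yours is more elementary, works uniformly over any field without the characteristic-$p$ detour, and makes the role of the matroid structure directly visible. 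The reduction to $Y_{\M}$ via semicontinuity is essentially shared, though the paper handles surjectivity by applying semicontinuity to the kernel of $H^0((\mathbb{P}^1)^{\ell},\mathcal{O}(\mathbf{a}))\otimes\mathcal{O}_X\to\mathcal{O}_X(\mathbf{a})$ rather than to~$\mathcal{I}_X(\mathbf{a})$.

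One small slip worth correcting: you write that $H^{\bullet}((\mathbb{P}^1)^{B_k},\mathcal{O}(\mathbf{a}-\sum_{v\in\mathcal{R}_k}\be_v))$ ``vanishes outright'' for $\mathbf{a}\ge 0$, and that ``the same holds for $\mathcal{O}(\mathbf{a})$.'' This is false as stated---the $H^0$ is typically nonzero. What is true, and what your induction actually uses, is that every entry of the twist is $\ge -1$, so by K\"unneth the cohomology is concentrated in degree~$0$; that is exactly what is needed for the surjectivity of $H^0(Y_{B_k},\mathcal{O}(\mathbf{a}))\to H^0(Z_{k-1}\cap Y_{B_k},\mathcal{O}(\mathbf{a}))$ and for the vanishing of the higher cohomology.
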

\begin{proof}
By upper semicontinuity \cite[\href{https://stacks.math.columbia.edu/tag/0BDN}{Tag 0BDN}]{stacks-project}, it suffices to prove these statements for $Y_{\mathrm{M}}$. For the surjectivity of the restriction map, one applies upper semicontinuity to the kernel of the surjective map $H^0((\mathbb{P}^1)^{\ell}, \mathcal{O}(a_1, \dotsc, a_{\ell})) \otimes \mathcal O_X \to \mathcal{O}_X(a_1, \dotsc, a_{\ell})$ of vector bundles.

When $(\mathbb{P}^1)^{\ell}$ is viewed as a homogeneous space for $(GL_2)^{\ell}$,
$Y_{\mathrm{M}}$ is a union of Schubert varieties. In particular, $Y_{\mathrm{M}}$ is defined over $\operatorname{Spec} \mathbb{Z}$, and the reduction of $Y_{\mathrm{M}}$ modulo any prime $p$ is compatibly Frobenius split in $(\mathbb{P}^1)^{\ell}$ relative to an ample divisor \cite[Theorem 2.3.10, Proposition 1.2.1]{BK05}. The line bundle $\mathcal{O}(a_1, \dotsc, a_{\ell})$ is defined over the prime field $\mathbb F$ of $\kk$, and we can check the cohomology vanishing and the surjectivity of the restriction map over $\mathbb F$ \cite[\href{https://stacks.math.columbia.edu/tag/02KH}{Tag 02KH}]{stacks-project}.
If the characteristic of $\kk$ is positive, then the splitting and Cohen--Macaulayness of $Y_\M$
implies the vanishing and the surjectivity: see \cite[Theorem 1.4.8, Lemma 1.4.7(ii), and Theorem 1.2.9]{BK05} as well as \cite[Section 4]{Bri03}. If $\kk$ has characteristic $0$, then the desired result follows from upper semicontinuity. 
\end{proof}

We will make use of the following standard consequence of the Leray spectral sequence and Serre vanishing. See, for example, \cite[Lemma 2.1]{Hyry}. 

\begin{proposition}\label{prop:leray}
Let $f \colon X \to Y$ be a proper morphism, and let $\mathcal{L}$ be an ample line bundle on $Y$. Then $R^if_* \mathcal{O}_X = 0$ for all $i > 0$ if and only if $H^i(X, f^* \mathcal{L}^{\otimes n}) = 0$ for all $i > 0$ and all $n$ sufficiently large.
\end{proposition}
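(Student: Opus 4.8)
The plan is to compare the cohomology of $X$ with that of $Y$ through the Leray spectral sequence of $f$, using Serre vanishing on $Y$ to kill the error terms. Since $Y$ is proper and carries an ample line bundle it is projective, and since $f$ is proper each $R^qf_*\mathcal{O}_X$ is a coherent sheaf on $Y$, vanishing for $q$ greater than some fixed $N$ (a bound on the dimension of the fibers of $f$). By the projection formula, $R^qf_*(f^*\mathcal{L}^{\otimes n}) \cong (R^qf_*\mathcal{O}_X)\otimes\mathcal{L}^{\otimes n}$, so the Leray spectral sequence reads $E_2^{p,q} = H^p\bigl(Y,(R^qf_*\mathcal{O}_X)\otimes\mathcal{L}^{\otimes n}\bigr) \Rightarrow H^{p+q}(X,f^*\mathcal{L}^{\otimes n})$. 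I would fix $n_0$ so that, for all $n\ge n_0$, Serre vanishing forces $E_2^{p,q}=0$ whenever $p>0$; this is possible because only the finitely many coherent sheaves $R^0f_*\mathcal{O}_X,\dotsc,R^Nf_*\mathcal{O}_X$ are involved.

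For the forward implication, if $R^qf_*\mathcal{O}_X=0$ for all $q>0$ then the $E_2$ page is concentrated in the row $q=0$, the spectral sequence degenerates, and $H^i(X,f^*\mathcal{L}^{\otimes n})\cong H^i(Y,f_*\mathcal{O}_X\otimes\mathcal{L}^{\otimes n})$, which is $0$ for $i>0$ and $n\ge n_0$. For the converse, suppose $R^qf_*\mathcal{O}_X\ne 0$ for some $q>0$. Every differential out of $E_r^{0,q}$ with $r\ge2$ lands in a subquotient of $E_2^{r,\,q-r+1}=H^r(Y,(R^{q-r+1}f_*\mathcal{O}_X)\otimes\mathcal{L}^{\otimes n})$, which vanishes for $n\ge n_0$ because $r>0$, and no nonzero differential enters $E_r^{0,q}$; hence $E_\infty^{0,q}=E_2^{0,q}$. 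Every other entry $E_\infty^{p,\,q-p}$ on this antidiagonal has $p>0$, hence vanishes for $n\ge n_0$, so $H^q(X,f^*\mathcal{L}^{\otimes n})\cong H^0(Y,(R^qf_*\mathcal{O}_X)\otimes\mathcal{L}^{\otimes n})$. Enlarging $n_0$ so that the nonzero coherent sheaf $R^qf_*\mathcal{O}_X$ becomes globally generated after twisting by $\mathcal{L}^{\otimes n}$, the right-hand side is nonzero; since $q>0$, this contradicts the hypothesis, and therefore $R^qf_*\mathcal{O}_X=0$ for all $q>0$.

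There is no serious obstacle: the result is classical, and one could simply cite \cite[Lemma~2.1]{Hyry}. The only point requiring care is the uniformity of $n_0$: a single bound must simultaneously handle Serre vanishing for all the sheaves $R^qf_*\mathcal{O}_X$ and the eventual global generation of the relevant one, which is immediate because finitely many sheaves are in play. I would accordingly keep the write-up to a few lines.
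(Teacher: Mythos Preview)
Your argument is correct and is exactly the standard Leray spectral sequence plus Serre vanishing argument that the paper has in mind; indeed, the paper does not supply a proof at all but simply records the statement as a known consequence of these two ingredients, citing \cite[Lemma 2.1]{Hyry}, which you also mention. There is nothing to add.
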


\begin{proposition}\label{prop:projection}
Let $X$ be a kindred subscheme of $(\mathbb{P}^1)^{\ell}$, and let $p \colon (\mathbb{P}^1)^{\ell} \to (\mathbb{P}^1)^{m}$ be a coordinate projection. Then $p(X)$ is a kindred subscheme of $(\mathbb{P}^1)^{m}$ and $Rp_* \mathcal{O}_X = \mathcal{O}_{p(X)}$. 
\end{proposition}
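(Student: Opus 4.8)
The plan is to prove the two assertions—that $p(X)$ is kindred and that $Rp_*\mathcal O_X = \mathcal O_{p(X)}$—more or less simultaneously, by first passing to the torus-invariant degeneration and then transporting the conclusions back to $X$. First I would let $\mathrm M$ be the progenitor matroid of $X$ on $[\ell]$, and I would identify the projection $p\colon (\mathbb P^1)^\ell \to (\mathbb P^1)^m$ with the coordinate projection forgetting the factors indexed by $[\ell]\setminus T$ for some $T\subseteq[\ell]$ with $|T|=m$; after reindexing we may take $T = [m]$. Recall from Proposition~\ref{prop:matroidexample} that $Y_{\mathrm M} = \bigcup_{B} Y_B$ is kindred with progenitor $\mathrm M$, and from Proposition~\ref{prop:kindredprops} that $X$ degenerates to $Y_{\mathrm M}$ inside $(\mathbb P^1)^\ell$ via a one-parameter family over (the henselization or completion of) a smooth curve. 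The image $p(Y_{\mathrm M})$ is manifestly $\bigcup_B Y_{B\cap[m]}$, which equals $Y_{\mathrm M|_{[m]}}$, the analogous union for the restriction matroid $\mathrm M|_{[m]}$ (whose bases are the maximal sets among $\{B\cap[m] : B \text{ a basis of }\mathrm M\}$; note every $B\cap[m]$ is independent in $\mathrm M|_{[m]}$ and the maximal ones are exactly the bases). By Proposition~\ref{prop:matroidexample} again, $Y_{\mathrm M|_{[m]}}$ is kindred with progenitor $\mathrm M|_{[m]}$.

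The heart of the argument is to compute $Rp_*\mathcal O_{Y_{\mathrm M}}$. Since $Y_{\mathrm M}$ is a union of Schubert varieties in the homogeneous space $(\mathbb P^1)^\ell$ for $(GL_2)^\ell$ (as noted in the proof of Corollary~\ref{prop:kindredvanishing}), and $p$ is an equivariant projection to another such homogeneous space, the image $Y_{\mathrm M|_{[m]}}$ is again a union of Schubert varieties. I would argue that $p\colon Y_{\mathrm M} \to Y_{\mathrm M|_{[m]}}$ has $Rp_*\mathcal O_{Y_{\mathrm M}} = \mathcal O_{Y_{\mathrm M|_{[m]}}}$ as follows. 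Using Proposition~\ref{prop:leray} with an ample line bundle $\mathcal L = \mathcal O(c,\dots,c)$ on $(\mathbb P^1)^m$ (pulled back to $Y_{\mathrm M|_{[m]}}$), it suffices to show $H^i(Y_{\mathrm M}, p^*\mathcal O(c,\dots,c)) = 0$ for $i>0$ and $c \gg 0$, and that the natural map $\mathcal O_{Y_{\mathrm M|_{[m]}}} \to p_*\mathcal O_{Y_{\mathrm M}}$ is an isomorphism. The cohomology vanishing is immediate from Corollary~\ref{prop:kindredvanishing} applied to $Y_{\mathrm M}$ with the effective weight $(c,\dots,c,0,\dots,0)$ (nonnegative entries), which gives $H^i = 0$ for $i>0$ for all $c\ge 0$. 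For the isomorphism on $p_*$, one checks it Frobenius-stalkwise / locally: the fibers of $p$ over points of $Y_{\mathrm M|_{[m]}}$ are unions of coordinate subspaces $Y_S$ of $(\mathbb P^1)^{[\ell]\setminus[m]}$ (the relevant $S$ being $\{B\setminus[m]\}$ ranging over bases $B$ with $B\cap[m]$ in the appropriate face), and such subschemes of products of projective lines are connected with $H^0 = \kk$ — again a consequence of Corollary~\ref{cor:CM} (geometric connectedness and reducedness) applied to the kindred subscheme cut out on the fiber. So $p_*\mathcal O_{Y_{\mathrm M}} = \mathcal O_{Y_{\mathrm M|_{[m]}}}$, and the higher direct images vanish by cohomology-and-base-change from the vanishing on fibers (the fibers being kindred, hence with vanishing higher cohomology of $\mathcal O$ by Corollary~\ref{prop:kindredvanishing} with weight $0$). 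Thus $Rp_*\mathcal O_{Y_{\mathrm M}} = \mathcal O_{Y_{\mathrm M|_{[m]}}}$.

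Finally I would propagate from the central fiber to $X$. Since $X$ and $Y_{\mathrm M}$ lie in the same $K_\circ((\mathbb P^1)^\ell)$-class, $p(X)$ and $p(Y_{\mathrm M}) = Y_{\mathrm M|_{[m]}}$ have the same Snapper polynomial provided $Rp_*$ behaves compatibly, which is exactly what I want to establish — so the logic must be arranged carefully. Concretely: let $\pi\colon \mathcal X \to \operatorname{Spec} R$ be the flat one-parameter degeneration of $X$ to $Y_{\mathrm M}$ from Proposition~\ref{prop:kindredprops}, sitting inside $(\mathbb P^1)^\ell_R$. Form the family $p(\mathcal X) \subseteq (\mathbb P^1)^m_R$ (the scheme-theoretic image). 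By upper semicontinuity of fiber dimension and of cohomology, together with the fact (just shown) that on the special fiber $p_*\mathcal O_{Y_{\mathrm M}} = \mathcal O_{Y_{\mathrm M|_{[m]}}}$ with vanishing higher direct images, cohomology-and-base-change gives that $Rp_*\mathcal O_{\mathcal X} $ is $R$-flat and commutes with base change, equal to $\mathcal O_{p(\mathcal X)}$, whose special fiber is $Y_{\mathrm M|_{[m]}}$ and whose general fiber is $\mathcal O_{p(X)}$; in particular $Rp_*\mathcal O_X = \mathcal O_{p(X)}$. Flatness of $p(\mathcal X)$ then forces $[\mathcal O_{p(X)}] = [\mathcal O_{Y_{\mathrm M|_{[m]}}}]$ in $K_\circ((\mathbb P^1)^m)$, and since $Y_{\mathrm M|_{[m]}}$ is kindred with progenitor $\mathrm M|_{[m]}$ and being kindred depends only on the $K$-class, $p(X)$ is kindred with progenitor $\mathrm M|_{[m]}$. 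The main obstacle I anticipate is the cohomology-and-base-change bookkeeping at the central fiber: one must know $Rp_*\mathcal O_{Y_{\mathrm M}}$ is concentrated in degree $0$ (so that base change applies without derived corrections), and the cleanest route to that is the fiberwise vanishing $H^{>0}(\text{fiber}, \mathcal O) = 0$, which in turn relies on identifying the scheme-theoretic fibers of $p|_{Y_{\mathrm M}}$ as kindred subschemes of smaller products of projective lines — the one spot where a small combinatorial lemma about restriction matroids and the geometry of the fibers genuinely has to be checked.
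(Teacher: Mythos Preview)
Your approach is different from the paper's and, while it can probably be made to work, it takes a substantial detour and leaves real gaps. The paper does not pass through the degeneration $Y_{\mathrm M}$ at all. Instead it applies Corollary~\ref{prop:kindredvanishing} directly to $X$: since $X$ is kindred, $H^i(X,\mathcal O(a_1,\dots,a_m,0,\dots,0))=0$ for $i>0$ and $a_j\ge 0$, and then Proposition~\ref{prop:leray} gives $R^{>0}p_*\mathcal O_X=0$ immediately. You invoke exactly the same pair of results, but only for $Y_{\mathrm M}$, and then spend effort transporting the conclusion back to~$X$; that detour is unnecessary. For the remaining assertion $p_*\mathcal O_X=\mathcal O_{p(X)}$ and kindredness of $p(X)$, the paper argues purely in $K$-theory: from $R^{>0}p_*=0$ one gets $[p_*\mathcal O_X]=[\mathcal O_{Y_{\mathrm N}}]$ by comparing Snapper polynomials, and then, after degenerating $p(X)$ to some $Z\supseteq Y_{\mathrm N}$ as in Proposition~\ref{prop:kindredprops}, the identity
\[
[\mathcal I_{Y_{\mathrm N}\subset Z}] \;=\; [\mathcal O_{p(X)}]-[\mathcal O_{Y_{\mathrm N}}] \;=\; -[p_*\mathcal O_X/\mathcal O_{p(X)}]
\]
forces both sheaves to vanish by a positivity argument (twist by $\mathcal O(n,\dots,n)$ for $n\gg0$). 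This simultaneously yields $p_*\mathcal O_X=\mathcal O_{p(X)}$ and $Z=Y_{\mathrm N}$, so $p(X)$ is kindred.

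Your route instead requires two nontrivial verifications that you do not carry out. First, the claim that the scheme-theoretic fibers of $p|_{Y_{\mathrm M}}$ are kindred is not just a ``small combinatorial lemma'': over a point with support $S\subseteq[m]$ one must identify the fiber with $Y_{(\mathrm M/S)|_{[\ell]\setminus[m]}}$, checking both the set-theoretic and scheme-theoretic statements. Second, and more seriously, your assertion that ``$Rp_*\mathcal O_{\mathcal X}$ \dots\ [is] equal to $\mathcal O_{p(\mathcal X)}$'' is exactly the point at issue. Knowing $p_*\mathcal O_{\mathcal X}$ is $R$-flat with special fiber $\mathcal O_{Y_{\mathrm M|_{[m]}}}$ tells you that $\operatorname{Spec}(p_*\mathcal O_{\mathcal X})$ is a flat family whose special fiber is a closed subscheme of $(\mathbb P^1)^m$; one still needs an argument (e.g.\ that surjectivity of $\mathcal O_{(\mathbb P^1)^m_R}\to p_*\mathcal O_{\mathcal X}$ is an open condition, via Nakayama) to conclude it is a closed subscheme on the generic fiber, and hence equals $p(X)$. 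The paper's $K$-theoretic positivity trick sidesteps both issues entirely.
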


\begin{proof}
We may assume that $p$ is the projection onto the first $m$ coordinates. 
 By Proposition~\ref{prop:kindredvanishing}, the higher cohomology on $X$ of positive twists of $\mathcal{O}(1, \dotsc, 1, 0, \dotsc, 0) = p^* \mathcal{O}(1, \dotsc, 1)$ vanishes. By Proposition~\ref{prop:leray}, we have $R^ip_* \mathcal{O}_X = 0$ for $i > 0$. 

We proceed to show that the natural map from $\mathcal{O}_{p(X)}$ to $p_*\mathcal{O}_X$ is an isomorphism. Because the higher direct images vanish, we have
\begin{equation}\label{eq:projection 1}
\chi((\mathbb{P}^1)^m, p_* \mathcal{O}_X \otimes \mathcal{O}(a_1, \dotsc, a_m)) = \chi(X, \mathcal{O}(a_1, \dotsc, a_m, 0, \dotsc, 0))
\end{equation}
for any $a_1, \dotsc, a_m$. Let $\mathrm{M}$ be the progenitor of $X$, and let $\mathrm{N}$ denote the restriction of $\mathrm{M}$ to $[m]$, i.e., $\mathrm{N}$ is the matroid whose independent sets are the independent sets of $\mathrm{M}$ which are contained in~$[m]$. 
By considering the leading term of the polynomial on the right-hand side of \eqref{eq:projection 1}, 
Proposition~\ref{prop:HRR} implies that the fundamental class of $p_* \mathcal{O}_X$ is $\sum_{B \text{ basis of }\mathrm{N}} [Y_B]$. 
Because the support of $p_*\mathcal{O}_X$ is $p(X)$, this implies that the fundamental class of $p(X)$ is $\sum_{B \text{ basis of }\mathrm{N}} [Y_B]$. 

As in the proof of Proposition~\ref{prop:kindredprops}, we can degenerate $p(X)$ to a subscheme $Z$ which contains $Y_{\mathrm{N}} \coloneqq \bigcup_{B \text{ basis of }\mathrm{N}}Y_B$. Let $\mathcal{I}$ be the ideal sheaf of $Y_{\mathrm{N}}$ in $Z$. We have 
$$[\mathcal{O}_{p(X)}] - [\mathcal{O}_{Y_{\mathrm{N}}}] = [\mathcal{I}] \in K_{\circ}((\mathbb{P}^1)^m).$$
On the other hand, because there is an injective map $\mathcal{O}_{p(X)} \to p_* \mathcal{O}_X$
we have $[\mathcal{O}_{p(X)}] = [p_* \mathcal{O}_X] - [p_* \mathcal{O}_X/\mathcal{O}_{p(X)}]$. By comparing the Snapper polynomials and using Proposition~\ref{prop:matroidexample}, we have that $[p_* \mathcal{O}_X] = p_* [\mathcal{O}_X] = [\mathcal{O}_{Y_{\mathrm{N}}}]$. We deduce that
$$-[p_* \mathcal{O}_X/\mathcal{O}_{p(X)}] = [\mathcal{I}].$$
For $n$ sufficiently large, we have $\chi((\mathbb{P}^1)^m, p_* \mathcal{O}_X/\mathcal{O}_{p(X)} \otimes \mathcal{O}(n, \dotsc, n)) \ge 0$, with equality if and only if $p_* \mathcal{O}_X/\mathcal{O}_{p(X)} = 0$. Similarly, $\chi((\mathbb{P}^1)^m, \mathcal{I}\otimes \mathcal{O}(n, \dotsc, n)) \ge 0$, with equality if and only if $\mathcal{I} = 0$. We deduce that $p_*\mathcal{O}_X = \mathcal{O}_{p(X)}$ and $\mathcal{O}_Z = \mathcal{O}_{Y_{\mathrm{N}}}$, so $p(X)$ is kindred. 
\end{proof}

\begin{remark}
It is to obtain Proposition~\ref{prop:projection} that we have included the matroid condition in the definition of kindred subschemes.
Proposition \ref{prop:kindredprops} and its corollaries 
hold true for the broader class of subschemes whose Snapper polynomial is $\sum_{I\in\mathcal S}\prod_{i\in I}a_i$ 
for any family $\mathcal S\subseteq 2^{[\ell]}$ of sets closed under taking subsets, all of whose maximal elements have the same cardinality:
let us call such families $\mathcal S$ \emph{pure}.
In the proof of Proposition~\ref{prop:projection}, we see that the fundamental class of the projection of $X$ to~$(\mathbb{P}^1)^J$ is a sum over the maxima of the set family $\{I\in\mathcal S:I\subseteq J\}$.
Requiring this set family to be pure for all $J\subseteq[\ell]$
forces $\mathcal S$ to be the independent set family of a matroid (or empty) \cite[Exercise 1.1.3]{Oxl11}.
In any event, the matroid condition loses us no generality for integral subschemes (Proposition~\ref{prop:integralBrion}).
\end{remark}

We now give examples of kindred subschemes and discuss related results in the literature. The most important example for us will be Corollary~\ref{cor:kindredinM}, to come. Many other examples arise from the following result of Brion \cite{Bri03} (see also \cite[\S4.4]{Per14} for an exposition in the context of spherical varieties).

\begin{proposition}\label{prop:integralBrion}
Let $X$ be an integral subvariety of $(\mathbb{P}^1)^{\ell}$ whose fundamental class is multiplicity-free, i.e., the coefficients in the expansion of $[X]$ in terms of the $[Y_S]$ are all either $0$ or $1$. Then $X$ is kindred. 
\end{proposition}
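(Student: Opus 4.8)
The plan is to deduce this from Brion's theorem \cite{Bri03} that a multiplicity-free subvariety of a flag variety degenerates flatly to the reduced union of the Schubert varieties appearing in its class, followed by an induction on $\ell$ via coordinate projections. View $(\mathbb{P}^1)^\ell$ as the flag variety $(\mathrm{SL}_2)^\ell/B$, where $B$ is the Borel subgroup of Proposition~\ref{prop:kindredprops}; its Schubert varieties are exactly the $Y_S$. Write $[X] = \sum_{B \in \mathcal{B}}[Y_B] \in A_{\dim X}((\mathbb{P}^1)^\ell)$, so that $\mathcal{B}$ is a collection of subsets of $[\ell]$ all of cardinality $r := \dim X$. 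By \cite{Bri03}, $X$ degenerates flatly to the reduced union $Y_\mathcal{B} := \bigcup_{B \in \mathcal{B}}Y_B$; hence $[\mathcal{O}_X] = [\mathcal{O}_{Y_\mathcal{B}}]$ in $K_\circ((\mathbb{P}^1)^\ell)$, so the Snapper polynomial of $X$ equals that of $Y_\mathcal{B}$, which (computed as in the proof of Proposition~\ref{prop:matroidexample}) is $\sum_{F \in \Delta}\prod_{i \in F}a_i$, where $\Delta$ is the simplicial complex with facet set $\mathcal{B}$. In particular $\Delta$ is pure, and $X$ is kindred as soon as $\Delta$ is the independence complex of a matroid; proving this is the remaining task.

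By the combinatorial criterion recalled in the remark following Proposition~\ref{prop:projection} (see \cite[Exercise 1.1.3]{Oxl11}), $\Delta$ is a matroid independence complex if and only if $\Delta|_J := \{F \in \Delta : F \subseteq J\}$ is pure for every $J \subseteq [\ell]$. Since $\Delta|_{[\ell]} = \Delta$ is pure, and since every restriction of a matroid complex is a matroid complex (hence pure), it suffices to show that $\Delta|_{E_j}$ is a matroid complex for each $j \in [\ell]$, where $E_j := [\ell] \setminus \{j\}$. We do this by induction on $\ell$, the case $\ell \le 1$ being trivial. Fix $j$, and let $q := q_j \colon (\mathbb{P}^1)^\ell \to (\mathbb{P}^1)^{E_j}$ be the projection forgetting the $j$-th coordinate.

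Since $Y_\mathcal{B}$ is a union of Schubert varieties, $R^iq_*\mathcal{O}_{Y_\mathcal{B}} = 0$ for $i > 0$ and $q_*\mathcal{O}_{Y_\mathcal{B}} = \mathcal{O}_{q(Y_\mathcal{B})}$ (e.g.\ \cite{BK05}); by upper semicontinuity of cohomology in the degeneration of $X$ to $Y_\mathcal{B}$ together with Proposition~\ref{prop:leray}, also $R^iq_*\mathcal{O}_X = 0$ for $i > 0$. Hence $\chi((\mathbb{P}^1)^{E_j}, q_*\mathcal{O}_X(\mathbf{a})) = \chi(X, q^*\mathcal{O}(\mathbf{a}))$ equals the Snapper polynomial of $X$ with the variable $a_j$ set to $0$, namely $\sum_{F \in \Delta|_{E_j}}\prod_{i \in F}a_i$. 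Applying Proposition~\ref{prop:HRR} and reading off leading terms, the fundamental class of the sheaf $q_*\mathcal{O}_X$, whose support is $q(X)$, is a multiplicity-free sum $\sum[Y_F]$ over the facets of $\Delta|_{E_j}$ of maximal size; since $q(X)$ is irreducible, this forces $q_*\mathcal{O}_X$ to have generic rank $1$ on $q(X)$ and $[q(X)]$ to equal this same multiplicity-free class. Thus $q(X) = q_j(X)$ is an integral subvariety of $(\mathbb{P}^1)^{E_j}$ with multiplicity-free fundamental class, so by the inductive hypothesis it is kindred, with some progenitor matroid $M_j$. By Corollary~\ref{prop:fund}, the bases of $M_j$ are exactly the maximal-size facets of $\Delta|_{E_j}$, so the independence complex of $M_j$ is contained in $\Delta|_{E_j}$. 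Let $Q$ be the cokernel of the injection $\mathcal{O}_{q(X)} \hookrightarrow q_*\mathcal{O}_X$; comparing Snapper polynomials, $\operatorname{Snap}_Q = \sum_{F \in \Delta|_{E_j},\ F \text{ not independent in } M_j}\prod_{i \in F}a_i$. If $\Delta|_{E_j}$ were strictly larger than the independence complex of $M_j$, then $Q \ne 0$ and, by Proposition~\ref{prop:HRR}, its support --- which lies inside the irreducible variety $q(X)$ --- would contain $Y_F$ for some set $F$ dependent in $M_j$. But $q(X)$ is kindred with progenitor $M_j$, hence degenerates to $Y_{M_j}$ (Proposition~\ref{prop:kindredprops}), and since $Y_F$ is stable under the Borel $B$, the inclusion $Y_F \subseteq q(X)$ would force $Y_F \subseteq Y_{M_j}$, i.e.\ $F$ independent in $M_j$ --- a contradiction. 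Therefore $\Delta|_{E_j}$ is the independence complex of $M_j$, completing the induction, and $X$ is kindred.

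The step I expect to be the main obstacle is the vanishing of the cokernel sheaf $Q$, equivalently the identifications $q_{j*}\mathcal{O}_X = \mathcal{O}_{q_j(X)}$ and $\Delta|_{E_j} = $ (independence complex of $M_j$): the fundamental-class computations only pin down top-dimensional parts, and --- unlike in the proof of Proposition~\ref{prop:projection} --- the sign in the relevant $K$-theoretic identity is not the one that would force vanishing by a Hilbert-polynomial positivity argument. What rescues the argument is the rigidity of kindred varieties, namely that $q_j(X)$, being a flat degeneration of the Stanley--Reisner scheme $Y_{M_j}$, can contain a Borel-stable coordinate subspace $Y_F$ only for $F$ independent. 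The other essential ingredient, used as a black box, is Brion's theorem, which guarantees that the degeneration of $X$ is reduced, so that the entire Snapper polynomial of $X$ --- not merely its leading term --- is the expected matroidal one.
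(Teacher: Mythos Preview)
Your approach differs from the paper's. The paper invokes the external result of \cite{BH20} or \cite{CCrLMZ} that the support of a multiplicity-free fundamental class of an irreducible subvariety of $(\mathbb{P}^1)^\ell$ is automatically the basis set of a matroid; Brion's degeneration then immediately gives $[\mathcal{O}_X]=[\mathcal{O}_{Y_\M}]$. You instead try to reprove that combinatorial fact by induction on~$\ell$ via coordinate projections.

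There is a genuine gap in the inductive step, exactly where you flag the main obstacle. From $Q\ne0$ and the computation of its Snapper polynomial, Proposition~\ref{prop:HRR} determines only the \emph{Chow class} of the top-dimensional part of $\operatorname{supp}(Q)$, not the support itself. The components of $\operatorname{supp}(Q)$ are subvarieties of the irreducible, non-Borel-stable variety $q(X)$; an effective cycle supported on $q(X)$ with class $\sum_F[Y_F]$ need not contain any coordinate subspace~$Y_F$. Hence the premise ``$Y_F\subseteq q(X)$'' that feeds your rigidity argument is unsupported. Nor is the sign trick of Proposition~\ref{prop:projection} available: even granting your pushforward claims for $Y_{\mathcal B}$, one obtains $[Q]=[\mathcal I]$ with $\mathcal I$ the ideal sheaf of $Y_{M_j}$ in $q(Y_{\mathcal B})$, and both classes have nonnegative Snapper polynomial at large twists, so no contradiction results. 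A secondary concern is that your appeal to \cite{BK05} for $R^iq_*\mathcal{O}_{Y_{\mathcal B}}=0$ is not routine: the standard Frobenius-splitting vanishing is for ample line bundles, whereas $q^*\mathcal{O}(n,\dotsc,n)$ is only nef, and the stronger vanishing you need is essentially Cohen--Macaulayness of the complex~$\Delta$, which is part of what you are trying to prove.
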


\begin{proof}
By \cite[Theorem 4.6]{BH20} or \cite{CCrLMZ}, the fundamental class of $X$ is equal to the fundamental class of $Y_{\mathrm{M}}$ for some matroid $\mathrm{M}$. By \cite{Bri03}, $X$ degenerates to a reduced union of the $Y_S$. As the fundamental class is preserved under this degeneration, $X$ degenerates to $Y_{\mathrm{M}}$, which is kindred by Proposition~\ref{prop:matroidexample}. 
The degeneration also preserves the class in $K_{\circ}((\mathbb{P}^1)^{\ell})$, so $X$ is kindred.
\end{proof}

\begin{example}\label{ex:WL}
The wonderful variety $W_L$, embedded in $(\mathbb{P}^1)^{\binom{n}{2}}$ as in Section~\ref{sec:introduction}, has a fundamental class which is multiplicity-free \cite{BinglinLi}, and so $W_L$ is a kindred subscheme.  This also follows directly from \cite[Corollary 7.5]{LLPP24}, which computes the Snapper polynomial of~$W_L$:
see the proof of Corollary~\ref{cor:kindredinM} below for more detail.
As a consequence, any subscheme $Z$ of $X_{[n]}$ with $[\mathcal{O}_Z] = [\mathcal{O}_{W_L}]$ in $K_{\circ}(X_{[n]})$ defines a kindred subscheme of $(\mathbb{P}^1)^{\binom{n}{2}}$, and in particular is reduced and Cohen--Macaulay. 
\end{example}

Many further examples of subvarieties of $(\mathbb{P}^1)^{\ell}$ whose fundamental class is multiplicity-free can be obtained by taking the closure in $(\mathbb{P}^1)^{\ell}$ of certain subvarieties of $\mathbb{A}^{\ell}$. 
Closures of linear subspaces \cite{AB16} and of certain determinantal varieties \cite{HL} are multiplicity-free, and so give examples of kindred subschemes. A fact central to~\cite{BF} is that if $L_1$ and $L_2$ are linear subspaces of $\kk^n$ such that $L_2$ is not contained in any coordinate hyperplane, then the closure of the semi-inverted Hadamard product
$\{v_1 \cdot v_2^{-1} : v_1 \in L_1, \, v_2 \in L_2 \cap (\kk^*)^n\}$
in $(\mathbb{P}^1)^n$ is multiplicity-free.

\begin{remark}\label{rem:CS}
Kindred subschemes of $(\mathbb{P}^1)^{\ell}$ are closely related to \emph{Cartwright--Sturmfels} ideals in the $\mathbb{N}^{\ell}$-graded ring $\kk[(\mathbb{P}^1)^{\ell}] = \kk[x_1, \dotsc, x_\ell, y_1, \dotsc, y_{\ell}]$, the homogeneous coordinate ring of $(\mathbb{P}^1)^{\ell}$ \cite{CDNG15,CDNG20}. These are, by definition, homogeneous ideals whose multigraded generic initial ideal is radical. It is a consequence of Proposition~\ref{prop:kindredvanishing} and Proposition~\ref{prop:kindredprops} that the saturated ideal in $\kk[(\mathbb{P}^1)^{\ell}]$ defining a kindred subscheme is a Cartwright--Sturmfels ideal. Conversely, the subscheme of $(\mathbb{P}^1)^{\ell}$ which is defined by a Cartwright--Sturmfels ideal whose multidegree is the bases of a matroid is a kindred subscheme. However, the $K$-theoretic nature of the definition of kindred schemes is more convenient for our purposes. 
\end{remark}

\begin{remark}
Finally, we remark that in \cite{Brion} (see also \cite[Theorem 4.3]{BF22}), Brion showed that an integral subvariety of $(\mathbb{P}^1)^{\ell}$ whose multidegree is multiplicity-free is normal and, if the characteristic of $\kk$ is $0$, has rational singularities. This complements Proposition~\ref{prop:projection}, as in some cases the singularities of a kindred subvariety can be resolved using a projection from a kindred subvariety embedded in a larger product of projective lines. 
In this manner Proposition \ref{prop:projection} can be used to replace the invocations of rational singularities in the collapsing arguments of \cite{BF22},
thus allowing the characteristic~0 assumption of that work to be removed.
\end{remark}

\section{Vanishing theorems}\label{sec:vanish}
In this section we introduce a subscheme of the permutohedral variety $X_{[n]}$ for each loopless matroid $\M$,
the \emph{tropical initial degeneration} $\indeg_w\M$. It depends on the choice of a ``sufficiently generic'' weight $w$. When $\mathrm{M}$ is realized by $L \subseteq \kk^n$, the subscheme $\indeg_w\M$ will be a Gr\"{o}bner degeneration of $W_L$ inside of $X_{[n]}$. Recall that there is an embedding of $X_{[n]}$ into $(\mathbb{P}^1)^{\binom{n}{2}}$.
We will show that $\indeg_w \M$ is a kindred subscheme of $(\mathbb{P}^1)^{\binom{n}{2}}$, and we will use this to prove Theorems \ref{thm:vanish} and~\ref{thm:Macaulay}.

\smallskip
Let $\Sigma_{[n]}$ be the \emph{permutohedral fan}, the fan of the toric variety $X_{[n]}$,
so that cones $\sigma$ of $\Sigma_{[n]}$ index closed boundary strata $V(\sigma)$ of~$X_{[n]}$.
To each loopless matroid $\M$ on~$[n]$, there is an associated subfan $\Sigma_\M$ of~$\Sigma_{[n]}$ called the \emph{Bergman fan} of~$\M$ \cite{AK06}.
Concretely, $\Sigma_{[n]}$ is the rational fan on $\mathbb Z^n/\mathbb Z\be_{[n]}$
whose rays are generated by the $\be_S$ for all nonempty proper subsets $S\subset[n]$,
and in which $\be_{S_1},\ldots,\be_{S_k}$ lie in a common cone if and only if, up to reordering, $S_1\subset\cdots\subset S_k$.
The \emph{rank} $\rk_\M(S)$ of a set $S\subseteq[n]$ in the matroid $\M$ is the size of a largest intersection of $S$ with a basis of~$\M$,
and a \emph{flat} of~$\M$ is a subset of~$[n]$ maximal with respect to its rank.
Then $\Sigma_\M$ contains exactly the cones of $\Sigma_{[n]}$ whose ray generators all correspond to flats of~$\M$.
The dimension of $\Sigma_\M$ is $r-1$, where $r$ is the rank of~$\M$.

Throughout, choose some $w \in \mathbb{Q}^n/\mathbb{Q}\be_{[n]}$ which is sufficiently generic, in the sense that for each pair of cones $\sigma$ and~$\tau$ of $\Sigma_{[n]}$,
if the intersection $\sigma\cap(w+\tau)$ is nonempty, then it is transverse.
In particular, if $\dim \sigma + \dim \tau < n-1$, we require that $\sigma$ and $w+\tau$ are disjoint.

\begin{definition}\label{defn:matroidDegen}
For a loopless matroid $\M$ of rank $r$ on $[n]$, let $\indeg_w\M$
be the union of those strata $V(\sigma) \subseteq X_{[n]}$
such that $\sigma$ intersects a maximal cone of $w+\Sigma_\M$, where $\sigma$ is a cone of $\Sigma_{[n]}$ of codimension $r-1$.
If $\mathrm{M}$ has loops, then define $\indeg_w\M$ to be empty. 
\end{definition} 

Note that $\indeg_w\M$ is by definition reduced.
The symbol $\indeg$ stands for ``initial degeneration''.
\footnote{We have avoided choosing the symbol ``$\operatorname{in}\M$'' to avert any confusion with the notion of the initial matroid of a valuated matroid, which we do not use.}

\begin{proposition}\label{prop:degenerindM}
Let $L \subseteq \kk^n$ be a realization of a loopless matroid $\mathrm{M}$. Then there is a flat degeneration of $W_L$ to $\indeg_w\M$ inside of $X_{[n]}$. In particular, $[\mathcal{O}_{W_L}] = [\mathcal{O}_{\indeg_w\M}]$ in $K_{\circ}(X_{[n]})$. 
\end{proposition}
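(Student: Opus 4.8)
The plan is to realize $\indeg_w\M$ as a genuine Gröbner degeneration of $W_L$ inside the toric variety $X_{[n]}$, driven by the weight vector $w$. Recall that $W_L$ is the closure in $X_{[n]}$ of the image of $\PP L$ under the rational map $\phi$; equivalently, $W_L$ is the closure of the torus orbit of a point of $\PP L$ intersected with the big torus $T$ of $X_{[n]}$. More precisely, the intersection $W_L\cap T$ is cut out inside $T$ by the linear ideal associated to $L$ (the relations among the coordinate functions on $L$, pulled back along the coordinate projections $\kk^n\to\kk^{\{i,j\}}$). The tropicalization of $W_L\cap T$ with respect to the trivial valuation is exactly the Bergman fan $\Sigma_\M$, by the theorem of Ardila--Klivans \cite{AK06}. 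I would then invoke the theory of tropical compactifications and Gröbner degenerations: a generic one-parameter degeneration of $W_L$ in $X_{[n]}$ with cocharacter (rational) direction $w$ produces a flat family over $\mathbb A^1$ whose special fiber is the union of the strata $V(\sigma)$ of $X_{[n]}$ that meet the shifted fan $w+\operatorname{trop}(W_L\cap T) = w+\Sigma_\M$ in the appropriate codimension. This is precisely the description of $\indeg_w\M$ in Definition~\ref{defn:matroidDegen}: the strata $V(\sigma)$ with $\sigma$ of codimension $r-1$ (matching $\dim W_L = r-1$) meeting a maximal cone of $w+\Sigma_\M$.

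Concretely, the key steps are as follows. First, set up the degeneration: extend the weight $w$ to a one-parameter subgroup $\lambda\colon \mathbb G_m\to T$ (clearing denominators in $w$), act on the point $[\phi(\ell)]\in W_L\cap T$ for generic $\ell\in L$, and take the flat limit as $t\to 0$ of the family $t\cdot W_L$; this limit $W_0$ is $T$-fixed since $w$ is generic, hence a reduced-or-not union of torus-invariant strata of $X_{[n]}$. Second, identify the top-dimensional strata of $W_0$: by the theory of geometric tropicalization / Tevelev's lemma together with the genericity hypothesis on $w$ (which forces $\sigma\cap(w+\tau)$ to be transverse, so that only strata of complementary codimension contribute), the strata appearing are exactly those $V(\sigma)$ with $\dim\sigma = n-1-(r-1) = n-r$ and $\sigma\cap(w+\Sigma_\M)\neq\emptyset$, each appearing with multiplicity one because the Bergman fan of a matroid is a tropical cycle all of whose weights are $1$. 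This matches $\indeg_w\M^{\mathrm{red}}$. Third, upgrade from a set-theoretic/cycle-level statement to a scheme-theoretic one: since $W_L$ is kindred (Example~\ref{ex:WL}), its $K$-class $[\mathcal O_{W_L}]$ in $K_\circ(X_{[n]})$ is preserved by the flat degeneration, so $[\mathcal O_{W_0}] = [\mathcal O_{W_L}] = [\mathcal O_{\indeg_w\M}]$, where the last equality holds because both $W_0$ and $\indeg_w\M$ are reduced unions of the same torus-invariant strata (the classes $[Y_\sigma]$ being linearly independent, and $\indeg_w\M$ being reduced by construction); alternatively, one shows $W_0$ is reduced directly from kindredness via Proposition~\ref{prop:kindredprops} applied in $(\PP^1)^{\binom n2}$, forcing $W_0 = Y_{\M'}\cap$(something), but the cleanest route is that a flat family preserves both the fundamental class and the $K$-class, and a reduced $T$-fixed subscheme is determined by its support. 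Hence $W_0 = \indeg_w\M$ as schemes, giving the flat degeneration, and $[\mathcal O_{W_L}] = [\mathcal O_{\indeg_w\M}]$ in $K_\circ(X_{[n]})$.

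The main obstacle I anticipate is the careful bookkeeping in the second step: matching the combinatorial recipe of Definition~\ref{defn:matroidDegen} — which strata $V(\sigma)$ of $X_{[n]}$ appear, and with what multiplicity — against the output of the Gröbner/tropical degeneration machinery. One must check that the special fiber has the expected dimension (that no lower-dimensional components appear, which is where flatness and the purity of the Bergman fan are used), that each top stratum appears with multiplicity exactly one (using that $\Sigma_\M$ is a weight-one tropical cycle, i.e., the matroid has no "tropical multiplicities"), and that the genericity condition on $w$ is exactly what is needed to make the stable intersection $\sigma\cap(w+\Sigma_\M)$ behave like an honest transverse intersection so that the fan-displacement rule computes the limit cycle. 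It may be cleanest to deduce the scheme structure formally: once the cycle $[\indeg_w\M]$ and the cycle of the flat limit agree, and the flat limit is reduced (which follows from kindredness of $W_L$ via the projections in Proposition~\ref{prop:projection} and the reducedness in Corollary~\ref{cor:CM}, since the limit is kindred too and any $T$-fixed kindred subscheme equals $Y_\M$ for its progenitor), the two subschemes coincide because a reduced union of torus-invariant strata is determined by its fundamental cycle.
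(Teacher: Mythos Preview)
Your proposal is correct and follows essentially the same approach as the paper: a one-parameter degeneration along the cocharacter $w$, identification of the limit's support via a tropical degeneration formula (the paper cites Katz \cite[Theorem~10.1]{Kat09} for this), and reducedness of the limit via kindredness of $W_L$ together with Corollary~\ref{cor:CM}. Two minor refinements: your multiplicity-one claim needs more than ``the Bergman fan has weight one'' (the stable intersection multiplicity also involves a lattice index---see Remark~\ref{lem:multiplicities indM}), though this is irrelevant since you ultimately get reducedness from kindredness anyway; and for the final $K$-class equality the paper explicitly invokes the perfect pairing $K^\circ(X_{[n]})\times K_\circ(X_{[n]})\to\ZZ$ of \cite{AP15} rather than asserting outright that flat degenerations preserve $K$-classes.
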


\begin{proof}
Note that $\indeg_w\M$ does not change if we scale $w$, so we may assume that $w$ lies in $\mathbb{Z}^n/\mathbb{Z}\be_{[n]}$, the cocharacter lattice of the torus in $X_{[n]}$. In particular, $w$ defines an action of $\mathbb{G}_m$ on $X_{[n]}$. We claim that the flat limit $\lim_{t \to 0} t \cdot W_L$ is equal to $\operatorname{indeg}_w \M$. 

In \cite[Theorem 10.1]{Kat09}, Katz shows that, as $w$ is sufficiently generic, the limit is a union of torus-invariant strata. He also gives a combinatorial description of the strata which occur, and this description matches the definition of $\indeg_w\M$ because $\Sigma_\M$ is the tropicalization of $\PP L \cap \PP((\kk^*)^{[n]})$ \cite[\S9.3]{Stu02}.
In particular, this shows that the reduction of $\lim_{t \to 0} t \cdot W_L$ is equal to $\indeg_w\M$, so it suffices to show that this limit is reduced. 

By Example~\ref{ex:WL}, $W_L$ defines a kindred subscheme of $(\mathbb{P}^1)^{\binom{n}{2}}$. In particular, because Euler characteristics are locally constant in proper flat families, any flat degeneration of $W_L$ inside of $(\mathbb{P}^1)^{\binom{n}{2}}$ is kindred. Corollary~\ref{cor:CM} then implies that the flat limit is reduced. 

It remains to show that $[\mathcal{O}_{W_L}] = [\mathcal{O}_{\indeg_w\M}]$ in $K_{\circ}(X_{[n]})$. For any class $a \in K^{\circ}(X_{[n]})$, we have 
$$\chi(X_{[n]}, a \cdot [\mathcal{O}_{W_L}]) = \chi(W_L, a) = \chi(\indeg_w\M, a) = \chi(X_{[n]}, a \cdot [\mathcal{O}_{\indeg_w\M}])$$ because Euler characteristics are preserved by degeneration. By \cite[Theorem 1.3]{AP15}, the pairing $K^{\circ}(X_{[n]}) \times K_{\circ}(X_{[n]}) \to \mathbb{Z}$ given by $(a, b) \mapsto \chi(X_{[n]}, ab)$ is perfect, so $[\mathcal{O}_{W_L}] = [\mathcal{O}_{\indeg_w\M}]$. 
\end{proof}

\begin{remark}\label{lem:multiplicities indM}
Reducedness of the flat limit implies that all multiplicities in Katz' formula equal $1$.
That is, it is a consequence of the proof that for any codimension $r-1$ cone $\sigma$ of~$\Sigma_{[n]}$, there is at most one maximal cone of $w+\Sigma_\M$ which intersects it, 
and the tropical intersection multiplicity is~$1$.
This fact can also be proved using the agreement of stable intersection and matroid intersection in the sense of Welsh \cite[\S4]{Spe08} (see also \cite[Corollary 1.7]{EHL23}).
\end{remark}

Proposition~\ref{prop:degenerindM} implies that Euler characteristic computations on $\indeg_w\M$ are equal to the analogous  computations on the wonderful variety, and so to the analogous  computations in $K(\mathrm{M})$. In order to prove Theorem~\ref{thm:Macaulay}, we will need to relate computations on $\indeg_w\M$ to computations in $K(\mathrm{M})$ for non-realizable matroids. Our tool to do this will be \emph{valuativity}. 

For a set $P\subseteq\mathbb R^n$, let $\mathbf 1_P \colon \mathbb R^n\to\mathbb Z$ be its indicator function.
A function valued in an abelian group, with domain the set of all matroids of rank~$r$ on~$[n]$, 
is said to be \emph{valuative} if it factors through the map $\M\mapsto\mathbf 1_{P(\M)}$.
Valuative functions form an abelian group under addition.  

\begin{proposition}\label{prop:Kvaluative}
The assignment $\M \mapsto [\mathcal O_{\indeg_w\M}] \in K_\circ(X_{[n]})$ is valuative.
\end{proposition}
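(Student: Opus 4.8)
I want to show that $\M \mapsto [\mathcal{O}_{\indeg_w \M}] \in K_\circ(X_{[n]})$ factors through $\M \mapsto \mathbf{1}_{P(\M)}$. The natural approach is to show that $[\mathcal{O}_{\indeg_w \M}]$ has an explicit expansion in terms of torus-invariant $K$-classes whose coefficients are manifestly valuative functions of $\M$. Since $\indeg_w \M$ is by definition a reduced union of strata $V(\sigma)$ over the codimension-$(r-1)$ cones $\sigma$ that meet a maximal cone of $w + \Sigma_\M$, and since by Remark~\ref{lem:multiplicities indM} each such $\sigma$ meets at most one maximal cone of $w + \Sigma_\M$ transversely in a single point, one knows that
\[
[\indeg_w \M] = \sum_{\sigma} c_\sigma(\M)\, [V(\sigma)] \in A_*(X_{[n]}),
\]
where the sum is over codimension-$(r-1)$ cones $\sigma$ of $\Sigma_{[n]}$ and $c_\sigma(\M) \in \{0,1\}$ records whether $\sigma$ meets a maximal cone of $w + \Sigma_\M$. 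The key point is then that $c_\sigma(\M)$ is itself a valuative function of $\M$: this is a tropical stable-intersection count, namely the stable intersection number of the Bergman fan $\Sigma_\M$ (shifted by $w$) with the affine cone $w' + \sigma^\vee$ dual to $\sigma$, or equivalently (by the compatibility of stable and matroid intersection cited in Remark~\ref{lem:multiplicities indM}) a coefficient of the Minkowski-weight cycle associated to $\M$. Such intersection numbers, and more generally the coefficients of the Bergman-fan Minkowski weight, are known to be valuative in $\M$ — this is exactly the content of the valuativity of the matroid intersection ring / tropical cycle, as in the references to \cite{Spe08, EHL23, AHK18} and the valuativity results of Derksen–Fink.

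**Passing from $A_*$ to $K_\circ$.** The decomposition above lives in the Chow group, but $\indeg_w \M$ is not just a cycle — it is an honest subscheme, and $[\mathcal{O}_{\indeg_w \M}] \in K_\circ$ carries more information than $[\indeg_w \M] \in A_*$. To lift the argument to $K$-theory, I would use that $\indeg_w \M$ is a reduced union of the torus-invariant strata $V(\sigma)$ indexed by the set $\mathcal{S}(\M) := \{\sigma : c_\sigma(\M) = 1\}$ together with all their faces. The class $[\mathcal{O}_{\indeg_w \M}]$ is then computed by inclusion-exclusion over this simplicial-type complex of cones:
\[
[\mathcal{O}_{\indeg_w \M}] = \sum_{\tau} \varepsilon_\tau(\mathcal{S}(\M))\, [\mathcal{O}_{V(\tau)}],
\]
where $\tau$ ranges over all cones of $\Sigma_{[n]}$ and $\varepsilon_\tau$ is a Möbius-type coefficient depending only on which maximal cones of $\mathcal{S}(\M)$ contain $\tau$, i.e., only on the set $\mathcal{S}(\M) \subseteq \Sigma_{[n]}$ of maximal cells. (Concretely: since $\indeg_w \M$ is reduced and its components are the $V(\sigma)$, $\sigma \in \mathcal{S}(\M)$, one has a resolution of $\mathcal{O}_{\indeg_w \M}$ by the \v{C}ech-type complex of the $\mathcal{O}_{V(\sigma \cap \sigma')}$, whose $K$-class only sees the combinatorics of $\mathcal{S}(\M)$.) Therefore it suffices to prove that the assignment $\M \mapsto \mathcal{S}(\M)$, viewed as taking values in the abelian group of formal $\mathbb{Z}$-linear combinations of subsets of the set of maximal cones of $\Sigma_{[n]}$ in the obvious indicator way $\M \mapsto \sum_{\sigma \in \mathcal{S}(\M)} [\sigma]$, is valuative — but that assignment is exactly $\M \mapsto [\indeg_w\M] \in A_*(X_{[n]})$ read off in the $[V(\sigma)]$ basis (using $c_\sigma \in \{0,1\}$), whose valuativity was established in the previous paragraph.

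**The main obstacle.** The delicate step is the valuativity of $\M \mapsto c_\sigma(\M)$, the indicator that $w + \Sigma_\M$ meets the cone $\sigma$. One must be careful that this is genuinely a valuative function — a priori "does $\sigma$ meet $w + \Sigma_\M$" is a Boolean condition, not obviously additive over matroid subdivisions. The resolution is that, because $w$ is generic and $\sigma$ has complementary codimension, $c_\sigma(\M)$ equals the \emph{tropical stable intersection number} of the balanced fan $\Sigma_\M$ with the generically-translated complementary-dimensional fan associated to $\sigma$, and stable intersection numbers of Bergman fans are known to be valuative in $\M$: indeed they are polynomial evaluations of the Tutte-type / Derksen–Fink $\mathcal{G}$-invariant, or, in the language used in this paper, coefficients of the Minkowski weight $\Delta_\M \in \mathrm{MW}_{r-1}(\Sigma_{[n]})$ corresponding to $\M$ under the identification with $[\indeg_w \M]$, and the map $\M \mapsto \Delta_\M$ is valuative by the matroid intersection/valuativity results cited above (e.g.\ \cite[\S4]{Spe08}, \cite{EHL23}, and the valuativity of Chow classes of Bergman fans). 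Once that is in hand, the inclusion-exclusion passage to $K_\circ$ is purely formal, since it depends only on the (valuative) data of which strata appear. I would organize the write-up as: (1) recall that $\indeg_w \M$ is reduced with components $V(\sigma)$, $\sigma \in \mathcal{S}(\M)$, and $\mathcal{S}(\M)$ is determined by the stable-intersection pattern; (2) invoke valuativity of that pattern; (3) express $[\mathcal{O}_{\indeg_w\M}]$ via inclusion-exclusion over $\mathcal{S}(\M)$ and conclude.
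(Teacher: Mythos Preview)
Your plan has a genuine gap at the passage from $A_*$ to $K_\circ$. You correctly observe that the top-level indicators $c_\sigma(\M)$, for $\sigma$ of codimension $r-1$, are valuative (they are the coefficients of the Bergman class). But the inclusion--exclusion expansion of $[\mathcal O_{\indeg_w\M}]$ is \emph{not} linear in these indicators. Concretely, if $V(\sigma_1)\cap V(\sigma_2)=V(\tau)$ and these are the only two components that can contain $V(\tau)$, then the coefficient of $[\mathcal O_{V(\tau)}]$ in your \v{C}ech formula is $-c_{\sigma_1}(\M)\,c_{\sigma_2}(\M)$, a quadratic expression in the $c_\sigma$. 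Equivalently, the map ``$\mathcal S(\M)\mapsto[\mathcal O_{\bigcup_{\sigma\in\mathcal S(\M)}V(\sigma)}]$'' is not the restriction of a group homomorphism from the free abelian group on cones to $K_\circ(X_{[n]})$: one has $[\mathcal O_{V(\sigma_1)\cup V(\sigma_2)}]\neq[\mathcal O_{V(\sigma_1)}]+[\mathcal O_{V(\sigma_2)}]$ in general. So knowing that $\M\mapsto\sum_\sigma c_\sigma(\M)[\sigma]$ is valuative does not let you conclude.

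What the paper does instead is prove valuativity of the indicator $i_\tau(\M):=[V(\tau)\subseteq\indeg_w\M]$ for \emph{every} cone $\tau$, not only the top-dimensional ones. The M\"obius/Knutson recursion then expresses the $K$-theory coefficients as \emph{linear} combinations of the $i_\tau$'s (with coefficients depending only on the fixed poset $\Sigma_{[n]}$), so valuativity follows. The nontrivial point is that for $\tau$ of codimension strictly less than $r-1$, the condition ``$\tau$ meets $w+\Sigma_\M$'' is no longer a transverse stable-intersection count, so your stable-intersection argument does not apply. The paper handles this via Lemma~\ref{lem:bigger cones} together with a topological trick: $\tau\cap(w+\Sigma_\M)$ is an intersection of tropically convex sets, hence contractible when nonempty, so $i_\tau(\M)$ equals the topological Euler characteristic of a bounded truncation of $\tau\cap(w+\Sigma_\M)$. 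Since Euler characteristic is linear in indicator functions of polyhedral sets and $\M\mapsto\mathbf 1_{\Sigma_\M}$ is valuative, $i_\tau$ is valuative. This Euler-characteristic step is the idea your proposal is missing.
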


We prepare for the proof with the following lemma.

\begin{lemma}\label{lem:bigger cones}
A cone $\sigma$ of $\Sigma_{[n]}$, of any dimension, intersects $w+\Sigma_\M$
if and only if $V(\sigma)\subseteq\indeg_w\M$.
\end{lemma}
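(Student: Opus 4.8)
The equivalence has an easy direction: if $V(\sigma)\subseteq\indeg_w\M$, then since $\indeg_w\M$ is by definition a finite union of the irreducible subvarieties $V(\rho)$ over those cones $\rho\in\Sigma_{[n]}$ of codimension $r-1$ meeting $w+\Sigma_\M$, and $V(\sigma)$ is irreducible, we get $V(\sigma)\subseteq V(\rho)$ for one such $\rho$. This forces $\rho$ to be a face of $\sigma$, and as $\rho$ meets $w+\Sigma_\M$ so does $\sigma$.

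For the converse I plan to prove the sharper statement that, if $\sigma$ meets $w+\Sigma_\M$, then some face $\rho$ of $\sigma$ of codimension exactly $r-1$ meets $w+\Sigma_\M$; the lemma then follows since $V(\sigma)\subseteq V(\rho)\subseteq\indeg_w\M$. I argue by descending induction on $\dim\sigma$. Genericity of $w$ gives transversality, so a cone $\tau$ of $\Sigma_\M$ with $\sigma\cap(w+\tau)\neq\emptyset$ satisfies $\dim\sigma+\dim\tau\geq n-1$; as $\Sigma_\M$ has dimension $r-1$, this yields $\dim\sigma\geq n-r$, so the base case $\dim\sigma=n-r$ is trivial. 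For the inductive step it suffices to produce a proper face of $\sigma$ that still meets $w+\Sigma_\M$: such a face has dimension $\geq n-r$, so induction applies.

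Hence the heart of the matter is: \emph{if $\dim\sigma>n-r$ and $\sigma\cap(w+\Sigma_\M)\neq\emptyset$, then $\partial\sigma$ meets $w+\Sigma_\M$.} Suppose not; then $Z:=\sigma\cap(w+\Sigma_\M)\subseteq\operatorname{relint}\sigma$. By transversality and purity of $\Sigma_\M$ (dimension $r-1$), $Z$ is a polyhedral complex pure of dimension $\dim\sigma+r-n\geq 1$, whose maximal cells are the transverse intersections $\sigma\cap(w+\tau)$ with $\tau$ a maximal cone of $\Sigma_\M$. The Bergman fan $\Sigma_\M$, being (the support of) a tropical linear space, is balanced with positive weights; in particular each of its codimension-one cones is a facet of at least two maximal cones. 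Because $Z$ avoids $\partial\sigma$, every codimension-one cell of $Z$ lies, near its relative interior, on a transverse slice of $w+\Sigma_\M$ across some codimension-one cone $\tau_1\in\Sigma_\M$, and the balancing relation of $\Sigma_\M$ at $\tau_1$ transports to a balancing relation for $Z$ there; so $Z$ is itself a balanced polyhedral complex with positive weights. But the support of a balanced complex of positive dimension with positive weights cannot lie in any closed half-space — pushing forward along a generic linear functional $\RR^n/\RR\be_{[n]}\to\RR$ produces a nonzero effective $1$-cycle in $\RR$, which is a positive multiple of $[\RR]$ and hence supported on all of $\RR$ — whereas $Z\subseteq\sigma$, a pointed cone. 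This contradiction proves the claim.

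The step I expect to require the most care is verifying that $Z$ is balanced, i.e.\ that the transverse restriction of the tropical cycle $w+\Sigma_\M$ to the open cone $\operatorname{relint}\sigma$ is again balanced at each codimension-one cell. Concretely, at a codimension-one cell $c$ of $Z$ lying over a codimension-one cone $\tau_1\in\Sigma_\M$, one uses the transversality hypothesis to compute $\dim(\operatorname{span}\sigma\cap\operatorname{span}\tau_1)$ and deduce that the composite $\operatorname{span}\sigma\hookrightarrow\RR^n/\RR\be_{[n]}\to(\RR^n/\RR\be_{[n]})/\operatorname{span}\tau_1$ is surjective; pushing the balancing relation $\sum_{\tau\supset\tau_1}u_{\tau/\tau_1}\in\operatorname{span}\tau_1$ through this map gives the balancing of $Z$ at $c$. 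The remaining ingredients — the reduction to the boundary statement, the dimension counts from transversality, and the elementary principle about balanced complexes in pointed cones — are routine.
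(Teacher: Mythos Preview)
Your proof is correct and follows essentially the same approach as the paper: the hard direction in both proceeds by descending induction on $\dim\sigma$, reducing to the observation that a positive-dimensional balanced polyhedral complex with positive weights cannot be contained in a pointed cone. The only difference is cosmetic---the paper produces its balanced complex as the stable intersection of $w+\Sigma_\M$ with the linear span $\operatorname{span}_{\RR}\sigma$ (so that balancing is inherited for free from the general theory), whereas you intersect with $\sigma$ itself and verify balancing of $Z$ by hand under the contradiction hypothesis $Z\subseteq\operatorname{relint}\sigma$; the paper's route thereby sidesteps the lattice-index bookkeeping you flag in your final paragraph.
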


\begin{proof}
If $V(\sigma)$ is a component of $\indeg_w\M$, 
then the strata $V(\tau)$ contained in $V(\sigma)$ are those indexed by cones $\tau\supseteq\sigma$.
Because $\sigma$ contains a point $v$ of the relative interior of a face $F$ of $w+\Sigma_\M$ and $F$ meets $\sigma$ transversely,
a neighborhood of $v$ within~$F$ intersects every such cone $\tau$.

Conversely, suppose $\sigma$ is a cone of $\Sigma_{[n]}$ of codimension less than $r-1$ meeting $w+\Sigma_\M$.
By induction on the dimension, it suffices to find a proper face of $\sigma$ which meets $w+\Sigma_\M$.
By our transversality assumptions, the intersection $A$ of $w+\Sigma_\M$ with the subspace $\operatorname{span}_{\mathbb R}\sigma$ equals the support of their stable intersection.
Because $w+\Sigma_\M$ is a balanced polyhedral complex when every maximal cone is given weight~$1$,
the stable intersection $A$ also has positive weights on its facets making it balanced.
The dimension of $A$ is $\dim\Sigma_\M-\operatorname{codim}\sigma > 0$.
But $\sigma$ is a strictly convex cone, and a positive-dimensional balanced complex cannot be contained in  a strictly convex cone,
so $A$ meets the boundary of~$\sigma$.
\end{proof}

\begin{proof}[Proof of Proposition~\ref{prop:Kvaluative}]
Let $\M$ be a matroid of rank~$r$ on~$[n]$.
Let $\mathcal P_r$ denote the poset of all torus-orbit closures in $X_{[n]}$ of dimension at most $r-1$, ordered by inclusion.
Then $\mathcal P_r$ has the intersect-decompose property of \cite{Knu09} and contains the components of $\indeg_w\M$,
and we can use \cite[Theorem 1]{Knu09} to find integers $c_\sigma(\M)$ such that
\[
[\mathcal O_{\indeg_w\M}] = \sum_{V(\sigma) \in \mathcal P_r} c_\sigma(\M) [\mathcal O_{V(\sigma)}].
\]
The computation uses a recurrence, which we restate to be careful about the effect of using a larger intersect-decompose set than the minimal one.
First define 
\[i_\sigma(\M)=\begin{cases}
    1 & \text{if }V(\sigma)\subseteq\indeg_w\M,\\
    0 & \text{otherwise.}
\end{cases}\]
Now the $c_\sigma(\M)$ are defined by the following recursive property:
\[
c_\sigma(\M) = i_\sigma(\M) - \sum_{\substack{V(\sigma') \in \mathcal P_r\\ V(\sigma') \supsetneq V(\sigma)}} c_{\sigma'}(\M).
\]
The sum on the right side is empty when $\dim V(\sigma) > r-1$.
By descending induction on $\dim V(\sigma)$, if $i_\sigma(\M)=0$, then $c_\sigma(\M)=0$.
Then considering only those $\sigma$ for which $i_\sigma(\M)=1$ yields the M\"obius recurrence as Knutson states it.
 
By Lemma~\ref{lem:bigger cones},
$i_\sigma(\M)=1$ if and only if $\sigma\cap(w+\Sigma_\M)$ is nonempty.
If $\sigma\cap(w+\Sigma_\M)$ is nonempty, then it is the intersection of two tropically convex sets \cite[Proposition 2.14]{Ham15}, and therefore is tropically convex itself and hence contractible \cite[Theorem 2]{DS04}.
Because a polyhedral complex with a bounded face admits a deformation retract onto its bounded subcomplex (see, e.g., the proof of \cite[Lemma 3.4]{EMPV25}), for a sufficiently large cube $C$ centered at the origin, the intersection $\sigma\cap (w+ \Sigma_\M)$ deformation retracts to $C\cap \sigma\cap (w+\Sigma_\M)$.
In particular, we have $i_\sigma(\M) = \chi_{\text{top}}(C\cap \sigma\cap (w+\Sigma_\M))$, where $\chi$ is the topological Euler characteristic.
If $A$ and $B$ are bounded polyhedral sets, then one may find polyhedral complex structures for $A$, $B$, $A\cap B$, and $A\cup B$, compatible with inclusions, so $\chi_{\text{top}}(A) + \chi_{\text{top}}(B) = \chi_{\text{top}}(A\cup B) + \chi_{\text{top}}(A\cap B)$.
Because polyhedral sets are intersection closed, \cite{Gro78} then implies that the topological Euler characteristic of a bounded polyhedral set is linear in its indicator function.
Since the assignment $\M \mapsto \mathbf 1_{\Sigma_\M}$ is valuative \cite[Proposition 2.26 \& Remark 5.17]{Ham17}, \cite[Corollary 7.11(ii)]{BEST23}, the association $\M\mapsto i_\sigma(\M)$ is therefore valuative.
The recursive formula then implies that the coefficients $c_\sigma(\M)$ are also valuative.
\end{proof}

For any loopless matroid $\mathrm{M}$, there is a restriction map from $K^{\circ}(X_{[n]})$ to $K(\mathrm{M})$. In particular, for any class $\xi \in K^{\circ}(X_{[n]})$, we may consider the Euler characteristic $\chi(\mathrm{M}, \xi)$ of the image of $\xi$ in $K(\mathrm{M})$. 
When $\mathrm{M}$ is realized by $L$, this map coincides with the restriction map $K^{\circ}(X_{[n]}) \to K^{\circ}(W_L)$ followed by the identification $K^{\circ}(W_L) \cong K(\mathrm{M})$, see \cite[Section 5]{LLPP24}. 

\begin{proposition}\label{prop:Kclass}
For any class $\xi\in K^\circ(X_{[n]})$ and any loopless matroid $\mathrm{M}$, we have
\[\chi(\M,\xi)=\chi(\indeg_w\M,\xi).\]
\end{proposition}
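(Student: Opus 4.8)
The plan is to prove the identity $\chi(\M,\xi) = \chi(\indeg_w\M,\xi)$ by a two-step argument: first establish it for realizable matroids, where it follows from what has already been set up, and then extend to all loopless matroids by valuativity, using the fact that the realizable matroids span the relevant space of valuative functions.

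For the realizable case, suppose $\M$ is realized by $L \subseteq \kk^n$. By Proposition~\ref{prop:degenerindM}, $W_L$ degenerates flatly to $\indeg_w\M$ inside $X_{[n]}$, so $[\mathcal O_{W_L}] = [\mathcal O_{\indeg_w\M}]$ in $K_\circ(X_{[n]})$. Applying the projection formula along $W_L \hookrightarrow X_{[n]}$ and the identification $K^\circ(W_L) \cong K(\M)$ recalled just before the statement, we get
\[
\chi(\M,\xi) = \chi(W_L, \xi|_{W_L}) = \chi(X_{[n]}, \xi \cdot [\mathcal O_{W_L}]) = \chi(X_{[n]}, \xi \cdot [\mathcal O_{\indeg_w\M}]) = \chi(\indeg_w\M,\xi),
\]
where the last equality is again the projection formula for $\indeg_w\M \hookrightarrow X_{[n]}$. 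So both sides agree for every realizable $\M$.

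Now fix the class $\xi \in K^\circ(X_{[n]})$ and consider the two functions $\M \mapsto \chi(\M,\xi)$ and $\M \mapsto \chi(\indeg_w\M,\xi)$ on the set of matroids of a fixed rank $r$ on $[n]$. The second is valuative: it is the composition of the valuative assignment $\M \mapsto [\mathcal O_{\indeg_w\M}]$ from Proposition~\ref{prop:Kvaluative} with the linear map $K_\circ(X_{[n]}) \to \ZZ$, $\eta \mapsto \chi(X_{[n]}, \xi \cdot \eta)$ (again using the projection formula to identify $\chi(\indeg_w\M,\xi)$ with $\chi(X_{[n]}, \xi\cdot[\mathcal O_{\indeg_w\M}])$). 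The first function, $\M \mapsto \chi(\M,\xi)$, is valuative by \cite[Proposition 5.3]{LLPP24} (the $K$-class $[\mathcal O_{W_L}]$, and more generally the matroid $K$-ring construction, is valuative); alternatively this follows from the realizable case together with the fact that a valuative function is determined by its values on realizable matroids. The two valuative functions agree on all realizable matroids by the previous paragraph. Since every valuative function on matroids of rank $r$ on $[n]$ is a $\QQ$-linear combination of indicator functions of base polytopes of realizable matroids — indeed the realizable matroids already span the space dual to valuative invariants, by \cite{DF10} — two valuative functions that agree on all realizable matroids agree everywhere. Hence $\chi(\M,\xi) = \chi(\indeg_w\M,\xi)$ for all loopless $\M$.

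The main obstacle is the last step: one must invoke (and cite correctly) the fact that valuative functions are determined by their restriction to realizable matroids, equivalently that indicator functions of realizable matroid base polytopes span the same space as all matroid base polytope indicators over $\QQ$. This is a known structural result about the valuative module of matroids (Derksen--Fink), but care is needed because $\indeg_w\M$ and $\chi(\M,-)$ are only defined for loopless matroids, so one should work within the span of loopless realizable matroids, or equivalently note that the characteristic functions involved vanish appropriately on matroids with loops; alternatively, one can avoid the spanning statement by appealing directly to a basis of valuative invariants realized by realizable matroids. Everything else is a routine unwinding of the projection formula and the definitions.
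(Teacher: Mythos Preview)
Your proof is correct and follows the same approach as the paper: verify the identity for realizable matroids via Proposition~\ref{prop:degenerindM}, check that both sides are valuative (Proposition~\ref{prop:Kvaluative} for the right-hand side, a result from \cite{LLPP24} for the left), and invoke the Derksen--Fink spanning result to conclude. Two minor corrections: the paper cites \cite[Lemma~6.4]{LLPP24} rather than Proposition~5.3 for the valuativity of $\M\mapsto\chi(\M,\xi)$, and your proposed ``alternative'' justification for that valuativity is circular (agreement with a valuative function on realizable matroids does not by itself establish valuativity), so you should drop it.
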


\begin{proof}
When $\mathrm{M}$ is realizable, this follows from Proposition~\ref{prop:degenerindM}.  By Proposition~\ref{prop:Kvaluative}, the function which assigns a matroid $\mathrm{M}$ to $\chi(\indeg_w\M,\xi)$ is valuative. By \cite[Lemma 6.4]{LLPP24}, the function which assigns a matroid $\mathrm{M}$ to $\chi(\mathrm{M}, \xi)$ is valuative. 
If two valuative functions agree on all realizable matroids, then they are equal on all matroids \cite[Theorem 5.4]{DF10} (see also \cite[Lemma 5.9]{BEST23}).
\end{proof}

\begin{corollary}\label{cor:kindredinM}
For every matroid~$\M$, the embedding of $\indeg_w\M$ into $(\mathbb{P}^1)^{\binom{n}{2}}$ realizes it as a kindred subscheme. 
\end{corollary}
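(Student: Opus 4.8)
The plan is to reduce Corollary~\ref{cor:kindredinM} to the identification of Snapper polynomials and the characterization of the kindred property in terms of these polynomials. By Definition~\ref{def:kindred}, it suffices to exhibit a matroid $\mathrm{N}$ on $\binom{[n]}{2}$ such that the Snapper polynomial of $\indeg_w\M$ inside $(\mathbb{P}^1)^{\binom n2}$ is $\sum_{I \text{ independent in }\mathrm{N}} \prod_{i \in I} a_i$, since whether a subscheme is kindred depends only on its class in $K_\circ((\mathbb{P}^1)^{\binom n2})$.

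First I would use valuativity to transfer the computation to the realizable case. By Proposition~\ref{prop:Kvaluative}, the assignment $\M \mapsto [\mathcal{O}_{\indeg_w\M}] \in K_\circ(X_{[n]})$ is valuative, hence so is $\M \mapsto \chi(\indeg_w\M, \mathcal{O}(a_1,\dotsc,a_{\binom n2}))$ for each fixed multidegree (here we push forward along $X_{[n]} \hookrightarrow (\mathbb{P}^1)^{\binom n2}$, which is an inclusion of schemes, so the structure sheaf pushes forward to the structure sheaf and Euler characteristics agree). On the other hand, the assignment sending $\M$ to the polynomial $(a_1,\dotsc,a_{\binom n2}) \mapsto \sum_{I \text{ independent in }\mathrm{N}(\M)}\prod_{i\in I} a_i$, where $\mathrm{N}(\M)$ is the matroid whose bases are the fundamental-class data of $W_L$ for $L$ realizing $\M$, should also be valuative: this is exactly the content of the Snapper polynomial formula for $W_L$ in \cite[Corollary 7.5]{LLPP24} (or \cite[Corollary 10.6]{BEST23}), which exhibits this polynomial as a valuative matroid invariant, namely the one whose exponents are governed by the matroid $\mathrm{N}(\M)$.

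Next, for realizable $\M$ with realization $L$, Proposition~\ref{prop:degenerindM} gives $[\mathcal{O}_{W_L}] = [\mathcal{O}_{\indeg_w\M}]$ in $K_\circ(X_{[n]})$, hence the same equality of pushed-forward classes in $K_\circ((\mathbb{P}^1)^{\binom n2})$, so $\indeg_w\M$ and $W_L$ have the same Snapper polynomial. By Example~\ref{ex:WL}, $W_L$ is kindred, so this Snapper polynomial has the form $\sum_{I \text{ independent in }\mathrm{N}}\prod_{i\in I} a_i$ for the progenitor matroid $\mathrm{N}$ of $W_L$; by the cited computation of the Snapper polynomial of $W_L$, this matroid $\mathrm{N}$ depends only on the matroid $\M$, not on the realization. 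Thus the two valuative functions above — $\M \mapsto (\text{Snapper polynomial of }\indeg_w\M)$ and $\M \mapsto \sum_{I \text{ ind. in }\mathrm{N}(\M)}\prod_{i\in I}a_i$ — agree on all realizable matroids. By \cite[Theorem 5.4]{DF10} (see also \cite[Lemma 5.9]{BEST23}), two valuative functions agreeing on realizable matroids agree on all matroids, so $\indeg_w\M$ has Snapper polynomial $\sum_{I \text{ independent in }\mathrm{N}(\M)}\prod_{i\in I}a_i$ for every $\M$. Hence $\indeg_w\M$ is kindred with progenitor matroid $\mathrm{N}(\M)$.

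The main obstacle I anticipate is bookkeeping around the embedding $X_{[n]} \hookrightarrow (\mathbb{P}^1)^{\binom n2}$: one must check that the Snapper polynomial computed in $(\mathbb{P}^1)^{\binom n2}$ really is the one appearing in \cite[Corollary 7.5]{LLPP24} (i.e., that the $K$-class identity in $K_\circ(X_{[n]})$ from Proposition~\ref{prop:Kvaluative} and Proposition~\ref{prop:degenerindM} correctly determines multigraded Euler characteristics on the ambient product of projective lines), and that valuativity of the target polynomial-valued invariant follows cleanly from the LLPP/BEST formula. Once the precise form of the Snapper polynomial of $W_L$ is in hand, the argument is a routine application of valuativity together with the $K$-theoretic characterization of the kindred property; indeed, for realizable $\M$ the statement is already Example~\ref{ex:WL} combined with Proposition~\ref{prop:degenerindM}, and the only new input is the extension to nonrealizable matroids by valuativity.
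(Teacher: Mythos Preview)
Your approach is essentially the paper's: both identify the Snapper polynomial of $\indeg_w\M$ with $\chi(\M,{-})$ and then invoke the explicit formula of \cite[Corollary 7.5]{LLPP24}. You reprove Proposition~\ref{prop:Kclass} inline via valuativity rather than citing it, which is redundant but not wrong.

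There is, however, a genuine gap. You define $\mathrm{N}(\M)$ as the progenitor matroid of $W_L$, which is only meaningful for realizable $\M$. Your valuativity argument then shows that, for every $\M$, the Snapper polynomial of $\indeg_w\M$ equals the valuative extension of $\M\mapsto(\text{Snapper polynomial of }W_L)$. But to conclude that $\indeg_w\M$ is \emph{kindred} you must verify that this polynomial is of the form $\sum_{I\text{ independent in }\mathrm{N}}\prod_{i\in I}a_i$ for some matroid $\mathrm{N}$, for \emph{every} $\M$. Valuativity alone does not give this: the predicate ``is the independent-set generating function of a matroid'' is not linear, so it need not be preserved under the valuative relations you are exploiting. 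Your argument as written only establishes the kindred property for realizable $\M$, which is already Example~\ref{ex:WL} plus Proposition~\ref{prop:degenerindM}.

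The paper closes this gap not by valuativity but by using that \cite[Corollary 7.5]{LLPP24} already gives, for \emph{all} loopless $\M$, the closed formula
\[
\chi\Big(\M,\ \bigotimes_{\{i,j\}}\mathcal L_{i,j}^{\otimes a_{i,j}}\Big)=\sum_S\prod_{\{i,j\}\in S}a_{i,j},
\]
the sum over subsets $S\subseteq\binom{[n]}{2}$ satisfying the dragon Hall--Rado condition, and then citing the purely combinatorial fact that these subsets are the independent sets of a matroid on $\binom{[n]}{2}$, namely the Dilworth truncation of~$\M$. That is the step your proposal is missing: an identification of $\mathrm{N}(\M)$ that makes sense, and is a matroid, for arbitrary $\M$.
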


\begin{proof}
If $\M$ has loops, then $\indeg_w\M$ is empty, so this is automatic. We may therefore assume that $\M$ is loopless. 
Let $\mathcal{L}_{i,j}$ denote the restriction of the $\mathcal{O}(1)$ on the factor of $(\mathbb{P}^1)^{\binom{n}{2}}$ labeled by $\{i,j\}$. In \cite[Corollary 7.5]{LLPP24}, the authors show that
$$\chi(\M, \bigotimes_{i,j} \mathcal{L}_{i,j}^{\otimes a_{i,j}}) = \sum_{S} \prod_{\{i,j\} \subset S} a_{i,j},$$
where the sum is over subsets $S$ of $\binom{[n]}2\coloneqq \{\{i,j\} : i, j \in [n],i\ne j\}$ that satisfy the \emph{dragon Hall--Rado condition}: for any nonempty $T \subset S$, it holds that $\rk_{\M}(\bigcup_{\{i,j\} \in T} \{i,j\}) \ge |T| + 1$. It is known that the subsets  which satisfy the dragon Hall--Rado condition are the bases of a matroid on~$\binom{[n]}2$ called the \emph{Dilworth truncation} of $\M$: see, e.g., 
\cite[\S1.1]{Mas81}. Proposition~\ref{prop:Kclass} implies that the Snapper polynomial of $\indeg_w\M$ has the same formula, and so $\indeg_w\M$ is a kindred subscheme, with the Dilworth truncation of~$\M$ as its progenitor matroid. 
\end{proof}

Using Corollary~\ref{cor:CM}, 
we deduce the following result from Corollary~\ref{cor:kindredinM}. 

\begin{corollary}\label{cor:connected}
$\indeg_w\M$ is Cohen--Macaulay, geometrically connected, and geometrically reduced.
\end{corollary}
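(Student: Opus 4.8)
The plan is straightforward: Corollary~\ref{cor:kindredinM} has just established that $\indeg_w\M$, embedded in $(\mathbb{P}^1)^{\binom n2}$, is a kindred subscheme. The desired properties are then immediate from the general structural result about kindred subschemes, namely Corollary~\ref{cor:CM}, which asserts precisely that any kindred subscheme of a product of projective lines is Cohen--Macaulay, pure dimensional, geometrically reduced, and geometrically connected.

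\begin{proof}
By Corollary~\ref{cor:kindredinM}, the embedding $\indeg_w\M\hookrightarrow(\mathbb{P}^1)^{\binom n2}$ realizes $\indeg_w\M$ as a kindred subscheme. The claim is now a special case of Corollary~\ref{cor:CM}, which states that every kindred subscheme of a product of projective lines is Cohen--Macaulay, geometrically reduced, and geometrically connected (indeed also pure dimensional). Since Cohen--Macaulayness, geometric connectedness, and geometric reducedness are intrinsic to the scheme $\indeg_w\M$ and do not depend on its embedding, this gives the result.
\end{proof}

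There is no real obstacle here; the content has already been front-loaded into the identification of $\indeg_w\M$ as kindred (via the Snapper polynomial formula of \cite[Corollary 7.5]{LLPP24}, Proposition~\ref{prop:Kclass}, and the observation that dragon Hall--Rado subsets form the Dilworth truncation matroid) and into Corollary~\ref{cor:CM}, whose proof reduces to the Cohen--Macaulayness of the Stanley--Reisner ring of a matroid independence complex together with the degeneration $X\rightsquigarrow Y_\M$ of Proposition~\ref{prop:kindredprops}. The only thing worth remarking on is that one should note these three properties are embedding-independent, so that transferring them from the kindred structure in $(\mathbb{P}^1)^{\binom n2}$ back to $\indeg_w\M\subseteq X_{[n]}$ costs nothing.
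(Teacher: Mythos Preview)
Your proof is correct and is essentially identical to the paper's own argument, which simply invokes Corollary~\ref{cor:kindredinM} and then Corollary~\ref{cor:CM}. The only addition in your write-up is the explicit remark that these properties are intrinsic to the scheme and independent of the embedding, which is a harmless clarification.
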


We already have the tools to prove Theorems~\ref{thm:vanish} and \ref{thm:Macaulay} for ample line bundles on $X_{[n]}$, but we will need to develop an additional tool to prove the main theorems for nef line bundles. 
We start with the following fact, the so-called strong normality of generalized permutohedra.
Recall that, by our definitions, generalized permutohedra are lattice polytopes.

\begin{proposition}[{\cite[Theorem 18.6.3]{Wel76}}]\label{prop:strong normality}
Let $P_1,\ldots,P_\ell$ be a collection of generalized permutohedra in $\mathbb Z^n$.
Every lattice point of the Minkowski sum $P_1+\cdots+P_\ell$ can be written as $p_1 + \dotsb + p_{\ell}$, with $p_i\in P_i\cap\mathbb Z^n$.
\end{proposition}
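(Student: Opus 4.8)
The plan is to deduce this from the known unimodularity and integer-decomposition properties of generalized permutohedra, phrased via their normal fans. First I would recall that a generalized permutohedron $P$ in $\RR^n$ is precisely a lattice polytope whose normal fan coarsens the braid (permutohedral) fan $\Sigma_{[n]}$; equivalently, $P$ is a deformation of the standard permutohedron. A Minkowski sum $P_1 + \dots + P_\ell$ of generalized permutohedra is again a generalized permutohedron, since the normal fan of a Minkowski sum is the common refinement of the normal fans, and the braid fan refines itself. So the statement reduces to: a single generalized permutohedron is a \emph{normal} lattice polytope in the strong sense that every lattice point of a Minkowski sum of copies (or, here, of possibly distinct generalized permutohedra) decomposes into lattice points of the summands.

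The key mechanism is that the braid fan $\Sigma_{[n]}$ is a smooth (unimodular) fan: every maximal cone is generated by a subset of the rays $\be_S$ that forms part of a $\ZZ$-basis of the lattice $\ZZ^n/\ZZ\be_{[n]}$ (these correspond to the maximal chains of subsets). Hence the associated projective toric variety $X_{[n]}$ is smooth, and any nef divisor on a smooth projective toric variety is basepoint-free; more relevantly, the polytopes of nef divisors on a smooth projective toric variety are exactly the lattice polytopes whose normal fan is refined by the fan — and such polytopes have the integer decomposition property with respect to Minkowski sums whose normal fans are all refined by the same smooth fan. The precise statement I would invoke is the combinatorial fact (this is essentially \cite[Theorem 18.6.3]{Wel76}, but I would also point to the toric formulation, e.g.\ via \cite{CLS11}) that for polytopes $P_1,\dots,P_\ell$ all of whose normal fans coarsen a fixed smooth complete fan, the natural map $\bigoplus_i (P_i \cap \ZZ^n) \to (P_1 + \dots + P_\ell)\cap \ZZ^n$ given by summation is surjective. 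Applying this with the smooth fan $\Sigma_{[n]}$ and the generalized permutohedra $P_1,\dots,P_\ell$ gives the claim.

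Concretely, the argument I would carry out runs as follows. Given a lattice point $q \in (P_1 + \dots + P_\ell) \cap \ZZ^n$, choose a generic linear functional $u$ in the interior of a maximal cone $\sigma$ of the common refinement of the normal fans (which is refined by $\Sigma_{[n]}$, hence I may take $\sigma$ to be a maximal cone of $\Sigma_{[n]}$). Then each $P_i$ has a unique vertex $v_i$ maximizing $u$, these $v_i$ are lattice points (vertices of lattice polytopes with vertices in $\ZZ^n$ — true for generalized permutohedra, whose vertices are lattice points), and $v := v_1 + \dots + v_\ell$ is the $u$-maximal vertex of the sum. Because $\sigma$ is a unimodular cone, the edge directions of $P_i$ at $v_i$ — which are among the primitive lattice vectors $\be_j - \be_k$ — together span a sublattice of full rank that is \emph{saturated}, indeed they can be completed to a $\ZZ$-basis adapted to $\sigma$. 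One then writes $q - v$ in the dual cone coordinates and uses a descending induction on $\langle u, v - q\rangle \ge 0$: if $q \ne v$, some edge of $P_1 + \dots + P_\ell$ at $v$ in a direction $\be_j - \be_k$ points ``toward'' $q$, this edge comes from an edge of one of the summands $P_i$ (by the Minkowski-sum edge description), replace $v_i$ by the adjacent vertex along that edge, which is again a lattice point, and recurse. Unimodularity of the cone is exactly what guarantees that a lattice point stays a lattice point under these moves and that the induction terminates at $q$.

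The main obstacle, and the step I would be most careful about, is the bookkeeping in that induction: ensuring that at each step the chosen edge direction of the sum lifts to an edge of one of the $P_i$ at the \emph{current} vertex $v_i$, and that after the move the new tuple of vertices still sums to a lattice point moving monotonically toward $q$. This is where the unimodularity of the braid fan is used essentially — it is false for general lattice polytopes — and where one must be careful that ``generalized permutohedron'' is being used in the sense of this paper (lattice polytope, normal fan coarsening $\Sigma_{[n]}$), so that all vertices and all the relevant edge directions lie in the lattice and are primitive. Since the paper explicitly cites \cite[Theorem 18.6.3]{Wel76} for exactly this statement, the cleanest proof is simply to cite it; I would include the sketch above only as a remark for the reader's convenience, noting that it is the standard consequence of the smoothness of the permutohedral fan together with the classical fact that nef line bundles on smooth projective toric varieties are basepoint-free.
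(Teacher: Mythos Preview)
The paper gives no proof of this proposition: it simply records the citation to \cite[Theorem 18.6.3]{Wel76}. Your proposal to cite Welsh is therefore exactly what the paper does, and on that level the approaches agree.

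Your supplementary sketch, however, is a genuinely different argument from the one in Welsh. Welsh's Theorem 18.6.3 is a statement in polymatroid theory, proved combinatorially via the machinery of submodular functions and matroid union/intersection; it does not pass through toric varieties or unimodular fans. Your route---deducing the decomposition from the smoothness of the permutohedral fan $\Sigma_{[n]}$---is the toric-geometric perspective, and it is correct in outline. The general principle you invoke, that lattice polytopes whose normal fans all coarsen a fixed \emph{smooth} complete fan enjoy this strong Minkowski integer-decomposition property, is true, but it is not quite as immediate as ``nef implies basepoint-free on smooth toric varieties''; that only gives global generation of each $\mathcal L_{P_i}$, not surjectivity of the multiplication map $\bigotimes_i H^0(\mathcal L_{P_i})\to H^0(\mathcal L_{\sum P_i})$. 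The edge-walk induction you describe is the standard way to close this gap, and you correctly identify the delicate point: after moving along an edge of one summand, one must check that the remaining discrepancy still lies in the tangent cone at the new vertex tuple so that the induction can continue. This works, but if you include the sketch you should either spell out that step or cite a source where the general smooth-fan statement is proved (e.g., the discrete-convex-analysis or toric-normality literature), rather than leaving it as an exercise.
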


Recall that every nef line bundle on $X_{[n]}$ corresponds to a generalized permutohedron $P$. 
As noted after \cite[Proposition 3.10]{EHL23},
Proposition~\ref{prop:strong normality} implies that
the map $X_{[n]}\to\prod_{i=1}^\ell\mathbb{P}(\kk^{P_i\cap\mathbb{Z}^n})$ induced by the complete linear series of the line bundle associated to each~$P_i$
has image isomorphic to the toric variety of the normal fan of $P_1+\cdots+P_\ell$.
In particular, the image of the map $f_P \colon X_{[n]} \to \PP(\kk^{P \cap \ZZ^n})$ is the toric variety $X_P$ of the normal fan of $P$, and when $P_1,\ldots,P_{\binom n2}$ is the collection of all segments $\operatorname{conv}\{\be_i,\be_j\}$,
we recover the defining embedding $X_{[n]}\hookrightarrow(\mathbb P^1)^{\binom n2}$ of the introduction.

\begin{proposition}\label{prop:push}
Let $P$ be a generalized permutohedron, and let $f_P \colon X_{[n]} \to X_P$ be the corresponding map. Then $f_P$ is ``locally a coordinate projection from $(\mathbb{P}^1)^{\binom{n}{2}}$,'' in the following sense. 
There is an open cover of $X_P$ by open sets $\{U_{\sigma}\}$ such that each $U_{\sigma}$ admits an open embedding into $(\mathbb{P}^1)^{\ell}$ for some $\ell$. Setting $Y_{\sigma}$ to be the closure of $U_{\sigma}$ in $(\mathbb{P}^1)^{\ell}$, we have the following commutative diagram
\begin{center}
\begin{tikzcd}
X_{[n]}  \arrow[d, "f_P"]
& f^{-1}_P(U_{\sigma}) \arrow[d] \arrow[l, hook'] \arrow[r, hook]
& X_{[n]} \arrow[r, hook] \arrow[d] &(\mathbb{P}^1)^{\binom{n}{2}} \arrow[d, "p"] \\
X_P 
& U_{\sigma} \arrow[l, hook'] \arrow[r, hook] & Y_{\sigma} \arrow[r, hook] & (\mathbb{P}^1)^{\ell},
\end{tikzcd}
\end{center}
where the map from $X_{[n]}$ to $(\mathbb{P}^1)^{\binom{n}{2}}$ is the defining embedding, and $p$ is a coordinate projection. 
\end{proposition}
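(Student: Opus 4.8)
The plan is to work locally on the toric variety $X_P$ using the standard open affine cover by torus-orbit charts, and to realize each such chart as an open subset of a product of projective lines. Concretely, the normal fan $\Sigma_P$ of $P$ is a coarsening of the permutohedral fan $\Sigma_{[n]}$, since $f_P$ is the toric morphism induced by the identity on the cocharacter lattice $\ZZ^n/\ZZ\be_{[n]}$. For each maximal cone $\sigma$ of $\Sigma_P$, let $U_\sigma = \operatorname{Spec}\kk[\sigma^\vee\cap M]$ be the corresponding affine chart of $X_P$, where $M$ is the character lattice. First I would observe that since $\sigma$ is a maximal (hence full-dimensional) cone of a complete fan, its dual $\sigma^\vee$ is a strictly convex full-dimensional cone; choosing a generating set of the monoid $\sigma^\vee\cap M$ of cardinality $\ell$ gives a closed embedding $U_\sigma\hookrightarrow\mathbb{A}^\ell$, and composing with $\mathbb{A}^\ell\hookrightarrow(\mathbb{P}^1)^\ell$ (one $\mathbb{P}^1$ per generator, each as the chart $\{y_i\ne 0\}$) gives the open embedding $U_\sigma\hookrightarrow(\mathbb{P}^1)^\ell$. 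The key point is to choose this generating set compatibly with the edge vectors $\be_i-\be_j$ of the permutohedral structure, so that the induced rational map $(\mathbb{P}^1)^{\binom n2}\dashrightarrow(\mathbb{P}^1)^\ell$ is literally a coordinate projection and not merely a monomial map.

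This compatibility is where the content lies. On $X_{[n]}$, the chart $f_P^{-1}(U_\sigma)$ is the union of the affine charts $U_{\sigma'}$ of $X_{[n]}$ over the maximal cones $\sigma'$ of $\Sigma_{[n]}$ contained in $\sigma$. The defining embedding $X_{[n]}\hookrightarrow(\mathbb{P}^1)^{\binom n2}$ records, in the factor indexed by $\{i,j\}$, the projectivized coordinate ratio corresponding to the linear functional $\be_i^*-\be_j^*$ (the pairing $\langle\be_i^*-\be_j^*,-\rangle$ on the cocharacter lattice). So I would show: the monoid $\sigma^\vee\cap M$ is generated by functionals of the form $\be_i^*-\be_j^*$. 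Indeed, the permutohedral fan is the normal fan of the standard permutohedron, whose edges are exactly the $\be_i-\be_j$; since $\Sigma_P$ is a coarsening, every ray of $\Sigma_P$ is a ray of $\Sigma_{[n]}$ and every wall is a union of walls of $\Sigma_{[n]}$, so the facet normals of $\sigma$ — which cut out $\sigma^\vee$ as a cone — are among the edge directions $\be_i-\be_j$ of $P$, i.e.\ the generators $\be_i^*-\be_j^*$ of $\sigma^\vee$. More carefully, Proposition~\ref{prop:strong normality} (strong normality) guarantees that the monoid $\sigma^\vee\cap M$ is generated by the primitive functionals along its extreme rays together with possibly a few more lattice points, but in the permutohedral situation these extreme-ray functionals are precisely differences $\be_i^*-\be_j^*$, and normality of the permutohedron ensures no additional generators are needed beyond such differences. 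Taking $\ell$ to be the number of facets of $\sigma$ and the generators to be the associated $\be_i^*-\be_j^*$ yields the open embedding $U_\sigma\hookrightarrow(\mathbb{P}^1)^\ell$ together with a coordinate projection $p\colon(\mathbb{P}^1)^{\binom n2}\to(\mathbb{P}^1)^\ell$ forgetting all factors except those indexed by the relevant pairs $\{i,j\}$.

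With the charts and projections in hand, I would verify commutativity of the diagram: on $f_P^{-1}(U_\sigma)$, the composite $X_{[n]}\hookrightarrow(\mathbb{P}^1)^{\binom n2}\xrightarrow{p}(\mathbb{P}^1)^\ell$ agrees with $f_P$ followed by $U_\sigma\hookrightarrow(\mathbb{P}^1)^\ell$, because both send a point to the tuple of coordinate ratios $(\text{coord}_i:\text{coord}_j)$ over exactly the pairs $\{i,j\}$ indexing the chosen generators of $\sigma^\vee\cap M$ — this is immediate from the definition of the toric morphism $f_P$ on the level of these monomial coordinates. Finally, I would define $Y_\sigma$ to be the closure of $U_\sigma$ inside $(\mathbb{P}^1)^\ell$ and note that the square with vertices $f_P^{-1}(U_\sigma)$, $X_{[n]}$, $Y_\sigma$, $(\mathbb{P}^1)^\ell$ commutes by construction. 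The main obstacle I anticipate is the bookkeeping in the previous paragraph: ensuring that the generating set of $\sigma^\vee\cap M$ can always be taken to consist of \emph{differences of dual basis vectors} rather than general lattice functionals, which requires a careful argument combining the coarsening property with the combinatorics of the permutohedron — the strong normality statement of Proposition~\ref{prop:strong normality} is the essential input that rules out spurious generators. The remaining steps — the affine embeddings, the reduction to maximal cones, the commutativity check — are routine toric geometry.
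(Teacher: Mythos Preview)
Your approach is essentially the same as the paper's: cover $X_P$ by the affine charts $U_\sigma$ for maximal cones $\sigma$, observe that the extreme rays of $\sigma^\vee$ are edge directions of~$P$ and hence of the form $\be_i-\be_j$, and invoke strong normality to rule out additional monoid generators. The paper packages your Hilbert-basis step more cleanly by explicitly forming the Minkowski sum $Q=\sum_k\operatorname{conv}\{\be_{i_k},\be_{j_k}\}$ of the segments corresponding to those edge directions, identifying $Y_\sigma$ with the toric variety $X_Q$, and then applying the consequence of Proposition~\ref{prop:strong normality} recorded just after it (that the map to $\prod_k\PP(\kk^{P_k\cap\ZZ^n})$ has image $X_{P_1+\cdots+P_\ell}$) to obtain the embedding $Y_\sigma\hookrightarrow(\PP^1)^\ell$ and the commutativity of the diagram in one stroke; this sidesteps the need to argue directly that the Hilbert basis of $\sigma^\vee\cap M$ has no spurious elements, which is the one step in your write-up that remains informal.
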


\begin{proof}
Let $\sigma$ be a maximal cone of the normal fan of $P$, and let $U_{\sigma}$ be the corresponding open subset of $X_P$. Let $T_1, \dotsc, T_{\ell}$ be the directions of the edges leaving the vertex of $P$ corresponding to $\sigma$. Let $Q$ be the Minkowski sum of the simplices corresponding to $T_1, \dotsc, T_{\ell}$, where the simplex corresponding to an edge direction $\be_i - \be_j$ is $\operatorname{conv}\{\be_i, \be_j\}$. Note that $Q$ is a generalized permutohedron. Let $Y_{\sigma}$ be the toric variety corresponding to the normal fan of $Q$. Then $\sigma$ is also a cone of the fan of $Y_{\sigma}$, so $U_{\sigma}$ can be identified with an open subset of $Y_{\sigma}$. 
It follows from Proposition~\ref{prop:strong normality} that there is an embedding of $Y_{\sigma}$ into $(\mathbb{P}^1)^{\ell}$ induced by $T_1, \dotsc, T_{\ell}$. As the embedding of $X_{[n]}$ into $(\mathbb{P}^1)^{\binom{n}{2}}$ is the map induced by taking all $\binom{n}{2}$ of the simplices $\operatorname{conv}\{\be_i, \be_j\}$, this gives the desired diagram. 
\end{proof}

\begin{corollary}\label{prop:CM}
Let $P$ be a generalized permutohedron, and let $f_P \colon X_{[n]} \to X_P$ be the corresponding map. Then, for any matroid $\M$, the natural map $\mathcal{O}_{f_P(\indeg_w\M)} \to Rf_{P*} \mathcal{O}_{\indeg_w\M} $ is an isomorphism, and $f_P(\indeg_w\M)$ is Cohen--Macaulay. If $\M$ is realized by $L$, then the natural map $\mathcal{O}_{f_P(W_L)}\to Rf_{P*} \mathcal{O}_{W_L}$ is an isomorphism, and $f_P(W_L)$ is Cohen--Macaulay. 
\end{corollary}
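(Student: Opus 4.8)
The plan is to handle the two assertions---about $\indeg_w\M$ and about $W_L$---at once. Write $Z$ for $\indeg_w\M$, or for $W_L$ when $\M$ is realized by $L$; in either case $Z$ is a kindred subscheme of $(\mathbb P^1)^{\binom n2}$ contained in $X_{[n]}$, by Corollary~\ref{cor:kindredinM} (respectively Example~\ref{ex:WL}). Both claims---that $\mathcal O_{f_P(Z)}\to Rf_{P*}\mathcal O_Z$ is an isomorphism, and that $f_P(Z)$ is Cohen--Macaulay---are local on the target $X_P$, so it is enough to check them after restricting to each open set $U_\sigma$ of the cover of $X_P$ produced by Proposition~\ref{prop:push}.

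So fix $U_\sigma$, and let $p\colon(\mathbb P^1)^{\binom n2}\to(\mathbb P^1)^\ell$ and $Y_\sigma\subseteq(\mathbb P^1)^\ell$ be as in Proposition~\ref{prop:push}, so that $Y_\sigma=p(X_{[n]})$, the open set $U_\sigma$ is identified with the open subset $Y_\sigma\cap V_\sigma$ of $Y_\sigma$ for a suitable open $V_\sigma\subseteq(\mathbb P^1)^\ell$, and the commutative diagram of that proposition identifies the restriction of $f_P$ over $U_\sigma$ with the restriction of $p$; in particular $f_P^{-1}(U_\sigma)=X_{[n]}\cap p^{-1}(V_\sigma)$, hence $Z\cap f_P^{-1}(U_\sigma)=Z\cap p^{-1}(V_\sigma)$ with $f_P$ restricting there to $p$. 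By Proposition~\ref{prop:projection}, $p(Z)$ is a kindred subscheme of $(\mathbb P^1)^\ell$ and the natural map $\mathcal O_{p(Z)}\to Rp_*\mathcal O_Z$ is an isomorphism. Now formation of the scheme-theoretic image, of higher direct images, and of pushforward all commute with restriction to an open subset of the target---the last two by flat base change along the open immersions $U_\sigma\hookrightarrow X_P$ and $V_\sigma\hookrightarrow(\mathbb P^1)^\ell$, all morphisms in sight being proper (hence quasi-compact and separated). Therefore $f_P(Z)\cap U_\sigma$ is identified with $p(Z)\cap V_\sigma$, and under this identification the restriction of $\mathcal O_{f_P(Z)}\to Rf_{P*}\mathcal O_Z$ over $U_\sigma$ is the restriction over $V_\sigma$ of $\mathcal O_{p(Z)}\to Rp_*\mathcal O_Z$, which is an isomorphism. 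Gluing over the $U_\sigma$ gives $\mathcal O_{f_P(Z)}\xrightarrow{\sim}Rf_{P*}\mathcal O_Z$.

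For the Cohen--Macaulay statement, $f_P(Z)\cap U_\sigma\cong p(Z)\cap V_\sigma$ is an open subscheme of the kindred subscheme $p(Z)$, hence Cohen--Macaulay by Corollary~\ref{cor:CM}; since these opens cover $f_P(Z)$, it is Cohen--Macaulay. I expect the only point requiring genuine care, rather than a direct appeal to Propositions~\ref{prop:push} and~\ref{prop:projection} and Corollary~\ref{cor:CM}, to be the bookkeeping: verifying that scheme-theoretic images are compatible with passage to the opens $U_\sigma$ and $V_\sigma$, and that the locally constructed isomorphisms are literally the restrictions of the global natural map $\mathcal O_{f_P(Z)}\to Rf_{P*}\mathcal O_Z$, so that they patch. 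If one prefers to stay out of the derived category, one can instead check separately over each $U_\sigma$ that $R^if_{P*}\mathcal O_Z=0$ for $i>0$ and that $\mathcal O_{f_P(Z)}\to f_{P*}\mathcal O_Z$ is an isomorphism, both being restrictions of the corresponding statements for $p$ in Proposition~\ref{prop:projection}. There is no further geometric obstacle: the substance lives entirely in Propositions~\ref{prop:push} and~\ref{prop:projection}.
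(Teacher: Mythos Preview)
Your proof is correct and follows essentially the same approach as the paper's: both arguments use the open cover of Proposition~\ref{prop:push} to reduce locally to a coordinate projection, invoke Proposition~\ref{prop:projection} to conclude that $p(Z)$ is kindred with $\mathcal O_{p(Z)}\xrightarrow{\sim}Rp_*\mathcal O_Z$, and then apply Corollary~\ref{cor:CM} for Cohen--Macaulayness. Your version is slightly more explicit about the bookkeeping (introducing $V_\sigma$, invoking flat base change, and unifying the two cases with the notation~$Z$), but there is no substantive difference.
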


\begin{proof}
Let $U_{\sigma}$ be an element of the open cover constructed in Proposition~\ref{prop:push}, which is equipped with a locally closed embedding $U_{\sigma} \hookrightarrow (\mathbb{P}^1)^{\ell}$. By Proposition~\ref{cor:kindredinM}, $\indeg_w\M$ is a kindred subscheme of $(\mathbb{P}^1)^{\binom{n}{2}}$. By Proposition~\ref{prop:projection}, $p(\operatorname{\indeg_w\M})$ is a kindred subscheme of $(\mathbb{P}^1)^{\ell}$, and the natural map from $\mathcal{O}_{p(\indeg_w\M)}$ to $Rp_* \mathcal{O}_{\indeg_w\M}$ is an isomorphism. By Corollary~\ref{cor:CM}, $p(\indeg_w\M)$ is Cohen--Macaulay, so $U_{\sigma} \cap p(\indeg_w\M)$ is Cohen--Macaulay. As being Cohen--Macaulay is a local property and $f_P(\indeg_w\M)$ is covered by open sets of the form $U_{\sigma} \cap p(\indeg_w\M)$, this implies that $f_P(\indeg_w\M)$ is Cohen--Macaulay. Similarly, that the natural map $\mathcal{O}_{f_P(\indeg_w\M)}\to Rf_{P*} \mathcal{O}_{\indeg_w\M}$ is an isomorphism is a local property, so this holds as well. When $\M$ is realized by $L$, since $W_L$ is kindred (Example~\ref{ex:WL}), an identical argument shows that the natural map $\mathcal{O}_{f_P(W_L)}\to Rf_{P*} \mathcal{O}_{W_L}$ is an isomorphism and that $f_P(W_L)$ is Cohen--Macaulay.
\end{proof}

\begin{proposition}\label{prop:vanishinM}
Let $P$ be a generalized permutohedron, and let $f_P \colon X_{[n]} \to X_P$ be the corresponding map. Then $H^i(\indeg_w\M, \mathcal{L}_P^{\otimes a}) = 0$ unless $a \ge 0$ and $i = 0$, or $a < 0$ and $i = \dim f_P(\indeg_w\M)$. For any $a$, the restriction map from $H^0(X_{[n]}, \mathcal{L}_P^{\otimes a}) \to H^0(\indeg_w\M, \mathcal{L}_P^{\otimes a})$ is surjective.
\end{proposition}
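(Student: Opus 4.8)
The plan is to transport the entire statement from $\indeg_w\M$ to its image $Z := f_P(\indeg_w\M)$ and then run the usual Frobenius-splitting argument on the toric variety $X_P$. Write $A := \mathcal{O}_{\PP(\kk^{P\cap\ZZ^n})}(1)|_{X_P}$, an ample line bundle on $X_P$ with $f_P^*A = \mathcal{L}_P$, and put $d := \dim Z$. I would first record three facts about $Z$. Because $\indeg_w\M = \bigcup_\sigma V(\sigma)$ is a reduced union of torus-invariant subvarieties of $X_{[n]}$ and $f_P \colon X_{[n]} \to X_P$ is the toric morphism attached to a refinement of fans (in particular proper and birational), $Z$ is a reduced union of torus-invariant subvarieties of $X_P$; it is Cohen--Macaulay by Corollary~\ref{prop:CM}; and it is connected and pure of dimension $d$, since by Proposition~\ref{prop:push} it is covered by opens isomorphic to opens in kindred subschemes, which are pure-dimensional by Corollary~\ref{cor:CM}. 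Next, Corollary~\ref{prop:CM} gives $Rf_{P*}\mathcal{O}_{\indeg_w\M} = \mathcal{O}_Z$, so the projection formula yields $Rf_{P*}(\mathcal{L}_P^{\otimes a}|_{\indeg_w\M}) = A^{\otimes a}|_Z$ and hence $H^i(\indeg_w\M, \mathcal{L}_P^{\otimes a}) = H^i(Z, A^{\otimes a})$ for all $i$ and $a$; and since $f_P$ is birational between normal varieties, $f_{P*}\mathcal{O}_{X_{[n]}} = \mathcal{O}_{X_P}$, whence $H^0(X_{[n]}, \mathcal{L}_P^{\otimes a}) = H^0(X_P, A^{\otimes a})$ compatibly with restriction. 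It thus remains to show: \textnormal{(i)} $H^i(Z, A^{\otimes a}) = 0$ for $a \ge 0$ and $i > 0$; \textnormal{(ii)} $H^0(X_P, A^{\otimes a}) \to H^0(Z, A^{\otimes a})$ is surjective for $a \ge 0$; \textnormal{(iii)} $H^i(Z, A^{\otimes a}) = 0$ for $a < 0$ and $i < d$.

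For (i)--(iii) I would argue as in the proof of Corollary~\ref{prop:kindredvanishing}. The toric variety $X_P$ is Frobenius split compatibly with all of its torus-invariant subvarieties and with their reduced unions, so $Z$ is compatibly Frobenius split in $X_P$, and in particular $Z$ is $F$-split; when $\operatorname{char}\kk = 0$ one first spreads $X_P \supseteq Z$ and $A$ out over $\operatorname{Spec}\ZZ$ (every object in sight is defined over $\ZZ$) and descends the conclusions by upper semicontinuity, so we may assume $\operatorname{char}\kk = p > 0$. For (i) with $a \ge 1$: $A^{\otimes a}$ is ample on the projective $F$-split scheme $Z$, and the direct-summand inclusions $H^i(Z, A^{\otimes a}) \hookrightarrow H^i(Z, A^{\otimes ap}) \hookrightarrow \dotsb \hookrightarrow H^i(Z, A^{\otimes ap^k})$ together with Serre vanishing for $k \gg 0$ force $H^i(Z, A^{\otimes a}) = 0$ for $i > 0$; for $a = 0$ one instead has $H^i(Z, \mathcal{O}_Z) = H^i(\indeg_w\M, \mathcal{O}_{\indeg_w\M}) = 0$ for $i > 0$, using $Rf_{P*}\mathcal{O}_{\indeg_w\M} = \mathcal{O}_Z$ and the vanishing in Corollary~\ref{prop:kindredvanishing} applied to the kindred subscheme $\indeg_w\M$. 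For (ii): this is the restriction theorem for compatibly Frobenius split subvarieties and ample line bundles \cite[Theorem~1.2.9, Lemma~1.4.7(ii), Theorem~1.4.8]{BK05} (the case $a = 0$ being trivial as $Z$ is connected and reduced). For (iii): $Z$ is a projective, pure-dimensional, Cohen--Macaulay scheme of dimension $d$, so Serre duality gives $H^i(Z, A^{\otimes a})^\vee \cong H^{d-i}(Z, \omega_Z \otimes A^{\otimes -a})$ for the dualizing sheaf $\omega_Z$; as $A$ is ample, Serre vanishing yields $H^i(Z, A^{\otimes m}) = 0$ for $i < d$ and $m \ll 0$, and applying the same summand inclusions with $ap^k \to -\infty$ gives $H^i(Z, A^{\otimes a}) = 0$ for $a < 0$ and $i < d$. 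Combined with the first paragraph and the identity $d = \dim f_P(\indeg_w\M)$, this is the proposition.

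The step requiring genuine care is the reduction in the first paragraph: checking that $Z = f_P(\indeg_w\M)$ really is a reduced, Cohen--Macaulay, pure-dimensional union of torus-invariant subvarieties of $X_P$, so that the toric Frobenius splitting applies to it verbatim, and that Corollary~\ref{prop:CM} together with the projection formula genuinely identifies the cohomology of $\mathcal{L}_P^{\otimes a}$ on $\indeg_w\M$ with that of $A^{\otimes a}$ on $Z$. Once that transfer is in place, the vanishing and surjectivity are squeezed out by the by-now-routine combination of Serre vanishing, Serre duality, and the splitting property of Frobenius, exactly as in Corollary~\ref{prop:kindredvanishing}.
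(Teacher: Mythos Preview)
Your proof is correct and follows essentially the same approach as the paper: both transfer the problem to $Z=f_P(\indeg_w\M)\subset X_P$ via Corollary~\ref{prop:CM} and the projection formula, then invoke compatible Frobenius splitting of torus-invariant unions in $X_P$ (together with Cohen--Macaulayness of $Z$ for the negative twists) after reducing to positive characteristic by upper semicontinuity. One small inaccuracy: $f_P\colon X_{[n]}\to X_P$ need not be birational when $\dim P<n-1$, but your conclusion $f_{P*}\mathcal{O}_{X_{[n]}}=\mathcal{O}_{X_P}$ still holds because $f_P$ is a proper surjective toric morphism with connected fibers onto a normal variety.
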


\begin{proof}
The toric variety $X_P$ is equipped with an ample line bundle $\mathcal{O}(1)$ corresponding to the polytope $P$; this line bundle pulls back to $\mathcal{L}_P$ under $f_P$. 
By Corollary~\ref{prop:CM} and the projection formula, the natural map from $H^i(f_P(\indeg_w\M), \mathcal{O}(a))$ to $H^i(\indeg_w\M, \mathcal{L}_P^{\otimes a})$ is an isomorphism, so it suffices to show the desired vanishing and surjectivity of global sections for $H^i(f_P(\indeg_w\M), \mathcal{O}(a))$. 

Note that $f_P(\indeg_w\M)$ is a union of torus-invariant subvarieties in the toric variety $X_P$. As such, it is defined over $\operatorname{Spec} \mathbb{Z}$. By upper semicontinuity, it suffices to show the desired vanishing and surjectivity of global sections after base changing $f_P(\indeg_w\M)$ to a field of positive characteristic. Over any field of positive characteristic, $f_P(\indeg_w\M)$ is a compatibly split subscheme of $X_P$ with respect to a certain Frobenius splitting of $X_P$ \cite[Proposition 3.2]{Pay09}, see also \cite[Proposition 1.2.1]{BK05}. 
As in the proof of Proposition~\ref{prop:kindredvanishing}, the vanishing and surjectivity of global sections then follows from \cite[Theorem 1.2.8, Theorem 1.2.9]{BK05}, as $f_P(\indeg_w\M)$ is Cohen--Macaulay by Corollary~\ref{prop:CM}. 
\end{proof}

\begin{proof}[Proof of Theorem~\ref{thm:vanish}]
Note that \ref{item:push} was proved in Corollary~\ref{prop:CM}. 
For parts \ref{item:vanish} and \ref{item:surject}, we use the degeneration of $W_L$ to $\indeg_w\M$ constructed in Proposition~\ref{prop:degenerindM}; these then follow from upper semicontinuity and Proposition~\ref{prop:vanishinM}. 
\end{proof}

\begin{proof}[Proof of Theorem~\ref{thm:Macaulay}]
By Proposition~\ref{prop:Kclass}, $\chi(\M, \mathcal{L}_P^{\otimes a}) = \chi(\indeg_w\M, \mathcal{L}_P^{\otimes a})$.
As $H^i(\indeg_w\M, \mathcal{L}_P^{\otimes a}) = 0$ for any $a \ge 0$ and $i > 0$, we see that 
$\chi(\indeg_w\M, \mathcal{L}_P^{\otimes a}) = \dim H^0(\indeg_w\M, \mathcal{L}_P^{\otimes a})$ 
and in particular, the right-hand side is a polynomial in $a$ of some degree $d$.
Using Proposition~\ref{prop:strong normality} and the results in Proposition~\ref{prop:vanishinM}, we see that the ring $\bigoplus_{a \ge 0} H^0(\indeg_w\M, \mathcal{L}_P^{\otimes a})$ is Cohen--Macaulay and generated in degree~$1$. 
As described in the introduction, this implies that if we write
$$\sum_{a \ge 0}\dim H^0(\indeg_w\M, \mathcal{L}_P^{\otimes a})t^a = \frac{h_0^* + h_1^*t +  \dotsb + h_d^*t^d}{(1 - t)^{d+1}},$$
then $(h_0^*, \dotsc, h_d^*)$ is a Macaulay vector. The result follows. 
\end{proof}

\section{Dimensions of images}\label{sec:dimensions}

Theorem~\ref{thm:vanish} states that the cohomology of a negative tensor power of a line bundle $\mathcal{L}_P$ on a wonderful variety vanishes except possibly in degree equal to the dimension of $f_P(W_L)$, where $f_P$ is the map to projective space induced by $\mathcal{L}_P$. 
In Theorem~\ref{thm:numericaldim} below, we give an explicit combinatorial formula for that dimension.
Theorem~\ref{thm:numericaldim} is stated in terms of $\indeg_w\M$, but if $L$ realizes $\M$, then $W_L$ degenerates to $\indeg_w\M$ by Proposition~\ref{prop:degenerindM}, and so they have the same class in the Chow groups of $X_{[n]}$. This implies that $\dim f_P(W_L) = \dim f_P(\indeg_w\M)$ for any generalized permutohedron $P$ since, if $A$ is an ample divisor on $X_{[n]}$, then $\dim W_L - \dim f_P(W_L)$ is the least $i$ such that $f_{P*}(A^i \smallfrown [W_L])$ is nonzero, and similarly for $\indeg_w\M$ because it is equidimensional. 

\smallskip
For a loopless matroid $\operatorname{M}$, we begin by showing that $\dim f_P(\indeg_w\M)$ is equal to the degree of the Snapper polynomial $\chi(\mathrm{M}, \mathcal{L}_P^{\otimes a})$, which is used to define the $h^*$ vector, and is also equal to a quantity derived from the \emph{Chow ring} $A^\bullet(\M)$ of $\M$. This will be used to deduce Theorem~\ref{thm:numericaldim}. For a rank $r$ loopless matroid $\M$, let $\int_\M: A^{r-1}(\M) \to \ZZ$ be the degree map as defined in \cite[Section 5]{AHK18}.

\begin{proposition}\label{prop:diminM}
Let $\operatorname{M}$ be a loopless matroid on $[n]$, and let $P$ be a generalized permutohedron in $\mathbb{R}^n$. Then the following quantities are equal:
\begin{enumerate}
\item The degree of the polynomial $a \mapsto \chi(\operatorname{M}, \mathcal{L}_P^{\otimes a})$.
\item The dimension of $f_P(\indeg_w\M)$. 
\item\label{item:numdim} The largest $d$ such that $c_1(\mathcal{L}_P)^d$ is nonzero in $A^\bullet(\operatorname{M})$, i.e., the {numerical dimension} of $c_1(\mathcal{L}_P)$ in $A^\bullet(\operatorname{M})$. 
\end{enumerate}
\end{proposition}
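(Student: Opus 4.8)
The plan is to prove the equalities by chaining together known facts about Euler characteristics, degenerations, and Frobenius-split intersection theory. The equality of (1) and (2) is essentially already in hand: by Proposition~\ref{prop:Kclass} we have $\chi(\M, \mathcal{L}_P^{\otimes a}) = \chi(\indeg_w\M, \mathcal{L}_P^{\otimes a})$, and by Corollary~\ref{prop:CM} together with the projection formula this equals $\chi(f_P(\indeg_w\M), \mathcal{O}(a))$. Since $\mathcal{O}(1)$ is ample on $X_P$, its restriction to the closed subscheme $f_P(\indeg_w\M)$ is ample, so the Snapper polynomial $a\mapsto\chi(f_P(\indeg_w\M),\mathcal{O}(a))$ has degree exactly $\dim f_P(\indeg_w\M)$. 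This gives $(1)=(2)$.

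For the equality with (3), the first step I would take is to pass from $\indeg_w\M$ to a cleaner intersection-theoretic object. By Proposition~\ref{prop:degenerindM} (when $\M$ is realizable) and by the valuativity/Chow-theoretic arguments underlying it, the class $[\indeg_w\M]$ in $A_{r-1}(X_{[n]})$ depends only on $\M$ and in fact is the ``Bergman class'' of $\M$; concretely, for any $\xi \in A^\bullet(X_{[n]})$ one has $\int_{X_{[n]}}\xi\smallfrown[\indeg_w\M] = \int_\M \psi^*\xi$, where $\psi\colon A^\bullet(X_{[n]})\to A^\bullet(\M)$ is the restriction map (this is the matroid-Chow incarnation of Proposition~\ref{prop:Kclass} at the level of the leading term, via Proposition~\ref{prop:HRR}, or can be cited from \cite{AHK18,BEST23}). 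The key point is that $c_1(\mathcal{L}_P)\in A^1(X_{[n]})$ restricts to the corresponding class in $A^1(\M)$, which I will also call $c_1(\mathcal{L}_P)$. Since $\indeg_w\M$ (equivalently $f_P(\indeg_w\M)$, as $f_P$ is proper) is equidimensional of dimension $r-1$, the dimension of its image $f_P(\indeg_w\M)$ equals the largest $d$ such that $f_{P*}\big(A^{\,r-1-d}\smallfrown[\indeg_w\M]\big)\neq 0$ for $A$ an ample class on $X_P$ pulled back to $X_{[n]}$; by the projection formula and ampleness of $\mathcal{O}(1)$ on $X_P$ this is the largest $d$ with $c_1(\mathcal{L}_P)^d\smallfrown[\indeg_w\M]\neq 0$ in $A_\bullet(X_{[n]})$, hence with $\int_{X_{[n]}}\eta\cdot c_1(\mathcal{L}_P)^d\smallfrown[\indeg_w\M]\neq 0$ for some $\eta$ of complementary degree. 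Translating through the identity above, this is exactly the largest $d$ with $c_1(\mathcal{L}_P)^d\neq 0$ in $A^\bullet(\M)$ — i.e.\ the numerical dimension in (3). So $(2)=(3)$.

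The step I expect to be the main obstacle is making the passage in the previous paragraph fully rigorous for \emph{non-realizable} $\M$: one needs that $[\indeg_w\M]\in A_{r-1}(X_{[n]})$ is the matroid Bergman class and that cap product with it is computed by $\int_\M$ after restriction. The clean way is to invoke valuativity once more — the assignment $\M\mapsto[\indeg_w\M]\in A_{r-1}(X_{[n]})$ is valuative (this follows from Proposition~\ref{prop:Kvaluative} by passing to associated graded / applying the Chern character componentwise, or directly by the argument of Lemma~\ref{lem:bigger cones} and \cite{Knu09}), it agrees with the Bergman class for realizable $\M$ by Proposition~\ref{prop:degenerindM}, and Bergman classes are themselves valuative \cite{BEST23}, so \cite[Theorem 5.4]{DF10} forces agreement for all $\M$. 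With the class identified, the identity $\int_{X_{[n]}}\psi^*(-)\smallfrown[\indeg_w\M]=\int_\M(-)$ is the defining property of the degree map on $A^\bullet(\M)$ as in \cite[Section 5]{AHK18} (see also \cite{BEST23}). Once this bookkeeping is done, the chain $(1)=(2)=(3)$ closes, and this is what I would write up.
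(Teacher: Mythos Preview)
Your approach matches the paper's closely. The equality $(1)=(2)$ is exactly as in the paper. For $(2)=(3)$, the paper also reduces to identifying $[\indeg_w\M]$ with the Bergman class $[\Sigma_\M]$ in $A_{r-1}(X_{[n]})$ and then comparing intersection numbers; however, rather than rerunning a valuativity argument at the Chow level, the paper extracts this identification directly from Proposition~\ref{prop:Kclass} via Hirzebruch--Riemann--Roch (the fundamental class is read off from the leading term of the Snapper polynomial, as in Proposition~\ref{prop:HRR}). This is shorter than your route, though yours is valid.

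There is one genuine slip. You assert that $\dim f_P(\indeg_w\M)$ equals the largest $d$ with $f_{P*}(A^{\,r-1-d}\smallfrown[\indeg_w\M])\neq 0$ for $A$ an ample class on~$X_P$ \emph{pulled back} to~$X_{[n]}$. This is false: if $\dim f_P(\indeg_w\M)<r-1$, then by the projection formula
\[
f_{P*}\bigl(f_P^*A^{\,r-1-d}\smallfrown[\indeg_w\M]\bigr)=A^{\,r-1-d}\smallfrown f_{P*}[\indeg_w\M]=0
\]
for \emph{every} $d$, since $f_{P*}[\indeg_w\M]\in A_{r-1}(X_P)$ is supported on a lower-dimensional set. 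So your criterion never fires. You need $A$ ample on~$X_{[n]}$. The paper takes such an $A$, uses that $f_{P*}(A^{\,r-1-d}\smallfrown[\indeg_w\M])$ is then an \emph{effective} class (hence nonzero if and only if its pairing with $c_1(\mathcal O(1))^d$ is nonzero), and invokes Poincar\'e duality for $A^\bullet(\M)$ together with generation in degree~$1$ to guarantee that a single generic ample $A$ witnesses $\int_\M c_1(\mathcal L_P)^k\cdot A^{r-1-k}\neq 0$ for all $k$ up to the numerical dimension. With that correction your argument goes through and coincides with the paper's.
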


\begin{proof}
Note that the line bundle $\mathcal{O}(1)$ on $\PP(\kk^{P \cap \ZZ^n})$ pulls back to $\mathcal{L}_P$ via $f_P$. 
By Proposition~\ref{prop:Kclass} and Corollary~\ref{prop:CM}, we have 
$\chi(\operatorname{M}, \mathcal{L}_P^{\otimes a}) = \chi(\indeg_w\M, \mathcal{L}_P^{\otimes a}) = \chi(f_P(\indeg_w\M), \mathcal{O}(a)).$
The line bundle $\mathcal{O}(1)$ is ample on $f_P(\indeg_w\M)$, so its Snapper polynomial has degree equal to the dimension of $f_P(\indeg_w\M)$, proving that the first quantity is equal to the second quantity. 

For the third quantity, we first note from \cite[Section 5]{AHK18} that
$A^\bullet(\M)$ is equipped with a surjection $A^\bullet(X_{[n]}) \to A^\bullet(\M)$, such that, for any $\xi\in A^{r-1}(X_{[n]})$, we have $\int_\M(\xi) =  \xi \smallfrown [\Sigma_\M] $. Here $[\Sigma_\M]\in A_{r-1}(X_{[n]})$ is the Chow homology class called the \emph{Bergman class} of $\M$, which is the Minkowski weight corresponding to $\Sigma_\M$ as a balanced fan.

Let $d$ be the numerical dimension of $c_1(\mathcal L_P)$.
For $0\leq k \leq d$, Poincar\'{e} duality for $A(\operatorname{M})$ \cite[Theorem 6.19]{AHK18} shows that there is some $x_k\in A^{r-1-k}(X_{[n]})$ such that $\int_{\M} x_k \cdot c_1(\mathcal{L}_P)^{k} \neq 0$.  As the ample cone is full dimensional and $A^{\bullet}(\M)$ is generated in degree $1$, we may take $x_k = A^{r-1-k}$ for some ample $A \in A^1(X_{[n]})$ and each $0\leq k \leq d$.
Because ${f_P}_*([\indeg_w\M] \cdot A^{r-1-k})$ is effective and $\mathcal O(1)$ is ample, we see that ${f_P}_*(A^{r-1-k} \smallfrown [\indeg_w\M]  )$ is nonzero if and only if $c_1(\mathcal O(1))^k \smallfrown {f_P}_*(A^{r-1-k} \smallfrown [\indeg_w\M]  ) $ is nonzero.
The latter is equal to $A^{r-1-k} \cdot c_1(\mathcal L_P)^k \smallfrown [\indeg_w \M] $ by the projection formula, which in turn is equal to $\int_\M  A^{r-1-k} \cdot c_1(\mathcal L_P)^k$ because the fundamental class of $\indeg_w\M$ in $A_{\bullet}(X_{[n]})$ is equal to the Bergman class $[\Sigma_{\M}]$ by Proposition~\ref{prop:Kclass} and the Hirzebruch--Riemann--Roch theorem, used like it is in Proposition~\ref{prop:HRR}. 
Therefore $r-1-d$ is the smallest number such that ${f_P}_*(A^{r-1-d} \smallfrown [\indeg_w\M]  )$ is nonzero, and so $r-1-d = \dim \indeg_w \M - \dim f_P(\indeg_w \M)$ since $\indeg_w\M$ is equidimensional of dimension $r-1$. Rearranging implies the result.
\end{proof}

We can now give a combinatorial formula for the quantities in Proposition~\ref{prop:diminM}. This formula will imply that the numerical dimension of $c_1(\mathcal{L}_P)$ in $A^\bullet(\M)$ only depends on the lineality space of the normal fan of $P$. 
A generalized permutohedron $P$ in $\mathbb{R}^n$ induces a partition $[n] = S_1 \sqcup \dotsb \sqcup S_{\ell}$, where the sets are the equivalence classes induced by the equivalence relation generated by setting $i \sim j$ if there is an edge of direction $\be_i - \be_j$ in $P$.
The parts of the partition induced by a matroid polytope $P(\M)$ are usually called the \emph{connected components} of~$\M$.

\begin{theorem}\label{thm:numericaldim}
Let $\M$ be a loopless matroid on $[n]$, and let $P$ be a generalized permutohedron with induced partition $[n] = S_1 \sqcup \dotsb \sqcup S_{\ell}$. Then the numerical dimension of $c_1(\mathcal{L}_P)$ in $A^\bullet(\mathrm{M})$ is the minimum over partitions $[n] = T_1 \sqcup \dotsb \sqcup T_k$ coarsening $[n] = S_1 \sqcup \dotsb \sqcup S_{\ell}$ of $\sum_{i=1}^{k} (\rk_{\mathrm{M}}(T_i) - 1)$.
\end{theorem}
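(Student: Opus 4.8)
The plan is to combine Proposition~\ref{prop:diminM} with the local description of $f_P$ in Proposition~\ref{prop:push} and the identification, from Corollary~\ref{cor:kindredinM}, of $\indeg_w\M$ as a kindred subscheme whose progenitor is the Dilworth truncation of~$\M$. Write $D(\M)$ for that Dilworth truncation (the matroid on $\binom{[n]}{2}$ whose bases are the subsets satisfying the dragon Hall--Rado condition), and let $E_0 \subseteq \binom{[n]}{2}$ be the set of pairs $\{a,b\}$ such that $\be_a - \be_b$ is an edge direction of~$P$; concretely $E_0 = \bigcup_i \binom{S_i}{2}$. By Proposition~\ref{prop:diminM}, the numerical dimension in question equals $\dim f_P(\indeg_w\M)$, so it suffices to prove
\[
\dim f_P(\indeg_w\M) = \rk_{D(\M)}(E_0)
\]
and then to rewrite the right-hand side as the asserted minimum over coarsenings of $S_\bullet$ using the classical rank formula for Dilworth truncations.

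For the first equality I would show that \emph{every} irreducible component of $f_P(\indeg_w\M)$ has dimension $\rk_{D(\M)}(E_0)$. Fix a maximal cone $\sigma$ of the normal fan of~$P$ with associated chart $U_\sigma$; by Proposition~\ref{prop:push} there is a coordinate projection $p_\sigma \colon (\mathbb P^1)^{\binom n2} \to (\mathbb P^1)^{E_\sigma}$, where $E_\sigma \subseteq \binom{[n]}{2}$ records the edge directions of~$P$ at the vertex dual to~$\sigma$, with $f_P(\indeg_w\M) \cap U_\sigma = p_\sigma(\indeg_w\M) \cap U_\sigma$ inside~$U_\sigma$. Since $\indeg_w\M$ is kindred with progenitor $D(\M)$, Proposition~\ref{prop:projection} shows $p_\sigma(\indeg_w\M)$ is kindred with progenitor the restriction $D(\M)|_{E_\sigma}$, hence (Proposition~\ref{prop:kindredprops} and Corollary~\ref{cor:CM}) is pure of dimension $\rk_{D(\M)}(E_\sigma)$. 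An irreducible component $C$ of $f_P(\indeg_w\M)$ meets some $U_\sigma$ in a dense open subset, which is therefore an irreducible component of the pure scheme $p_\sigma(\indeg_w\M) \cap U_\sigma$; thus $\dim C = \rk_{D(\M)}(E_\sigma)$. It remains to see this does not depend on~$\sigma$. The edge directions of a polytope at any vertex span $\operatorname{lin}(P - P) = \bigoplus_i \{x \in \RR^{S_i} : \sum_{a \in S_i} x_a = 0\}$, so within each block $S_i$ the pairs in $E_\sigma$ form a connected graph; hence $([n], E_\sigma)$ and $([n], E_0)$ have the same vertex set and the same connected components (the blocks $S_i$ with $|S_i| \ge 2$). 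By the rank formula recalled below, $\rk_{D(\M)}$ of a set of pairs depends only on that data, so $\rk_{D(\M)}(E_\sigma) = \rk_{D(\M)}(E_0)$ and $\dim f_P(\indeg_w\M) = \rk_{D(\M)}(E_0)$.

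For the second step, recall the classical formula: for $\mathcal A \subseteq \binom{[n]}{2}$,
\[
\rk_{D(\M)}(\mathcal A) = \min\Big\{ \sum_j \big(\rk_\M(V_j) - 1\big) : \{V_j\}\text{ is a partition of } V(\mathcal A)\text{ with every edge of }\mathcal A\text{ inside a }V_j\Big\},
\]
which follows from the dragon Hall--Rado description by an uncrossing argument (two blocks meeting in a common point may be merged without increasing the sum, since $\rk_\M(A\cup B) \le \rk_\M(A) + \rk_\M(B) - 1$ when $A \cap B \ne \emptyset$ and $\M$ is loopless). Applied to $\mathcal A = E_0$, the admissible partitions are the coarsenings of $\{S_i : |S_i| \ge 2\}$ on the ground set $V(E_0) = \bigcup_{|S_i| \ge 2} S_i$. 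Because $\M$ is loopless, a block $\{s\}$ coming from a singleton $S_i$ contributes $\rk_\M\{s\} - 1 = 0$, and merging such a block into another block $U$ changes the sum by $\rk_\M(U \cup \{s\}) - \rk_\M(U) \ge 0$; so the minimum of $\sum_i (\rk_\M(T_i) - 1)$ over coarsenings $T_\bullet$ of~$S_\bullet$ is attained while keeping all singleton blocks of~$S_\bullet$ separate, and this minimum equals $\rk_{D(\M)}(E_0)$. Combining with the first step gives the theorem, and shows that the numerical dimension depends on~$P$ only through the partition $S_\bullet$, i.e.\ only through the lineality space of the normal fan of~$P$.

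The step I expect to require the most care is the middle one: transferring the purely local statement of Proposition~\ref{prop:push} into a global computation of $\dim f_P(\indeg_w\M)$. This hinges on the purity (Corollary~\ref{cor:CM}) of the local images $p_\sigma(\indeg_w\M)$ and on checking that their progenitor matroids are exactly the restrictions $D(\M)|_{E_\sigma}$; and, on the combinatorial side, on matching the version of the Dilworth truncation and its rank formula to the dragon Hall--Rado condition used in Corollary~\ref{cor:kindredinM}.
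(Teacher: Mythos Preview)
Your approach is correct and genuinely different from the paper's. The paper proceeds by comparing $P$ to the permutohedral product $Q=\sum_{i}\sum_{S\subseteq S_i}\Delta_S$: since $f_P$ factors through $f_Q$, one inequality between the numerical dimensions is immediate; for the other, the paper uses that $\mathcal L_P$ is nef and big on $X_{S_1}\times\dots\times X_{S_\ell}$ to write $m\,c_1(\mathcal L_P)=c_1(\mathcal L_Q)+E$ with $E$ effective and then invokes the monotonicity of Proposition~\ref{prop:monotone}. The computation for $Q$ is finished via Proposition~\ref{prop:DHR} and Edmonds' polymatroid rank formula. Your route instead exploits the kindred machinery directly: via Proposition~\ref{prop:push} and Proposition~\ref{prop:projection}, each local image $p_\sigma(\indeg_w\M)$ is kindred and hence pure of dimension $\rk_{D(\M)}(E_\sigma)$, and you then identify this rank combinatorially. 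This is conceptually clean, stays close to the paper's central framework, and avoids the bigness/monotonicity detour entirely; the paper's argument, on the other hand, is more self-contained on the combinatorial side.

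Two comments. First, your justification of the rank formula $\rk_{D(\M)}(\mathcal A)=\min_{\{V_j\}}\sum_j(\rk_\M(V_j)-1)$ is incomplete: the uncrossing step you describe only reduces the edge-partition minimum to a vertex-partition minimum, it does not establish that this minimum equals the rank (the harder ``$\ge$'' direction, i.e.\ producing a large enough independent set). This is the classical rank formula for the first Dilworth truncation, essentially equivalent to the Edmonds polymatroid formula the paper cites in Lemma~\ref{lem:simplicialcone}; you should cite it as such rather than claim it ``follows from the dragon Hall--Rado description by an uncrossing argument.'' Second, the step you flag as delicate, passing from local to global, can be shortened: $\indeg_w\M$ is connected (Corollary~\ref{cor:connected}) and $f_P(\indeg_w\M)$ is Cohen--Macaulay (Corollary~\ref{prop:CM}), hence already equidimensional. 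So it suffices to compute $\rk_{D(\M)}(E_\sigma)$ at a single chart; the separate verification that $\rk_{D(\M)}(E_\sigma)$ is independent of~$\sigma$ is then a pleasant consequence rather than a needed input.
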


We will need a preparatory lemma. For a set $S\subseteq[n]$, write $\Delta_S$ for the convex hull of $\{\be_i:i\in S\}$, which is a generalized permutohedron, and let $h_S = c_1(\mathcal{L}_{\Delta_S})$.

\begin{lemma}\label{lem:simplicialcone}
Let $[n] = S_1 \sqcup \dotsb \sqcup S_{\ell}$ be a partition, and let $Q = \sum_{i = 1}^{\ell} \sum_{S \subseteq S_i} \Delta_{S}$. 
Let $d$ be the minimum over partitions $[n] = T_1 \sqcup \dotsb \sqcup T_k$ coarsening $[n] = S_1 \sqcup \dotsb \sqcup S_{\ell}$ of $\sum_{i=1}^{k} (\rk_{\mathrm{M}}(T_i) - 1)$.  Then $\int_{\M} c_1(\mathcal{L}_Q)^d h_{[n]}^{r - 1 - d} > 0$, and the numerical dimension of $c_1(\mathcal{L}_Q)$ is $d$. 
\end{lemma}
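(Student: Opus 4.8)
The lemma asserts two things: the positivity $\int_\M c_1(\mathcal L_Q)^d h_{[n]}^{r-1-d}>0$, and that the numerical dimension of $c_1(\mathcal L_Q)$ in $A^\bullet(\M)$ equals $d$. By Proposition~\ref{prop:diminM} the latter equals $\dim f_Q(\indeg_w\M)$, so the plan is to compute this dimension combinatorially via the theory of kindred subschemes, and then extract the positivity from the geometry of the morphism $f_Q$.

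\textbf{Step 1: the numerical dimension.} We will first observe that $\sum_{S\subseteq S_i}\Delta_S$ and $\sum_{\{j,k\}\subseteq S_i}\Delta_{\{j,k\}}$ have the same normal fan, namely the braid fan on $S_i$, because adjoining to a permutohedron the coarser normal fans of the simplices $\Delta_S$ with $|S|\ge 3$ does not refine anything. Hence, after the identification $X_Q\cong\prod_i X_{[S_i]}$ supplied by Proposition~\ref{prop:strong normality}, the morphism $f_Q\colon X_{[n]}\to X_Q$ is precisely the restriction to $X_{[n]}\subseteq(\mathbb P^1)^{\binom n2}$ of the coordinate projection $p$ onto the factors indexed by the intra-block pairs $A:=\bigsqcup_i\binom{S_i}2$. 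By Corollary~\ref{cor:kindredinM} the subscheme $\indeg_w\M$ is kindred with progenitor the Dilworth truncation $\operatorname{Dilworth}(\M)$, so by Proposition~\ref{prop:projection} the subscheme $f_Q(\indeg_w\M)=p(\indeg_w\M)$ is kindred with progenitor $\operatorname{Dilworth}(\M)|_A$; a kindred subscheme has dimension the rank of its progenitor (Corollary~\ref{prop:fund}), so $\dim f_Q(\indeg_w\M)=\rk_{\operatorname{Dilworth}(\M)}(A)$. It then remains to prove the purely combinatorial identity $\rk_{\operatorname{Dilworth}(\M)}(A)=d$. The inequality $\le d$ is easy: if $U\subseteq A$ is independent in $\operatorname{Dilworth}(\M)$ and $\{T_j\}$ is a coarsening of $\{S_i\}$, then partitioning $U$ according to which part $T_j$ contains a given pair and applying the dragon Hall--Rado condition within each group yields $|U|\le\sum_j(\rk_\M(T_j)-1)$. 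For the reverse inequality we will use the classical formula $\rk_{\operatorname{Dilworth}(\M)}(A)=\min_\pi\sum_{P\in\pi}(\rk_\M(\bigcup_{\{j,k\}\in P}\{j,k\})-1)$ over set-partitions $\pi$ of $A$, and argue that a minimizing $\pi$ may be taken to refine the block structure: starting from any $\pi$, for each $i$ with $|S_i|\ge 2$ we merge all parts of $\pi$ meeting $\binom{S_i}2$ into one; ordering these parts so that consecutive vertex-supports overlap---possible since $K_{S_i}$ is connected, so the supports of the resulting edge-classes form a connected hypergraph---submodularity of $\rk_\M$ together with looplessness (so overlapping supports have rank $\ge 1$) shows the merge does not increase the objective. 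After all such merges the parts are unions of the $\binom{S_i}2$, hence correspond to a coarsening of $\{S_i\}$, giving the objective $\ge d$. This yields $\dim f_Q(\indeg_w\M)=d$, and then Proposition~\ref{prop:diminM} gives the numerical dimension statement.

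\textbf{Step 2: positivity.} Let $g\colon X_{[n]}\to\mathbb P^{n-1}$ be the morphism of the complete linear system of $\mathcal L_{\Delta_{[n]}}$, so $h_{[n]}=c_1(g^*\mathcal O(1))$. We will use that $\int_\M h_{[n]}^{r-1}=1$: this number is valuative in $\M$ and equals $\deg\mathbb P L=1$ when $\M$ is realized by $L$, hence equals $1$ for all loopless $\M$ of rank $r$ (equivalently, the Bergman fan is a degree-one tropical cycle). Consequently $g_*[\indeg_w\M]$ is a nonzero cycle of dimension $r-1$, so $\dim g(\indeg_w\M)=r-1=\dim\indeg_w\M$, and hence $(g,f_Q)\colon\indeg_w\M\to\mathbb P^{n-1}\times X_Q$ is generically finite onto its image. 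It follows that for general $q\in f_Q(\indeg_w\M)$ the map $g$ restricted to the fiber $F_q:=f_Q^{-1}(q)\cap\indeg_w\M$ is generically finite onto its image; here $\dim F_q=r-1-d$ because $\indeg_w\M$ is equidimensional and $\dim f_Q(\indeg_w\M)=d$. Now we pull back $d$ general members of a very ample linear system on $X_Q$: by Step 1 they cut $f_Q(\indeg_w\M)$ in finitely many general points $q_s$, so $c_1(\mathcal L_Q)^d\smallfrown[\indeg_w\M]=\sum_s m_s[F_{s}]$ with each $m_s\ge 1$ and $F_s=f_Q^{-1}(q_s)\cap\indeg_w\M$. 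Capping with $h_{[n]}^{r-1-d}$ and using the projection formula,
\[
\int_\M c_1(\mathcal L_Q)^d\, h_{[n]}^{r-1-d}=\sum_s m_s\,\deg(g_*[F_{s}]),
\]
and each $g_*[F_{s}]$ is a nonzero effective cycle of dimension $r-1-d$ in $\mathbb P^{n-1}$, so the right-hand side is strictly positive.

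\textbf{The main obstacle} is the combinatorial identity $\rk_{\operatorname{Dilworth}(\M)}(A)=d$ in Step~1, specifically the inequality $\ge d$: it requires showing that an optimal partition in the Dilworth-truncation rank formula can be chosen compatibly with the block decomposition, and the submodularity-driven merging argument is the delicate point. In Step~2 the only subtlety is the routine Bertini/moving-lemma bookkeeping guaranteeing that the chosen hyperplane sections and the points $q_s$ are sufficiently general for the cycle-theoretic manipulations above.
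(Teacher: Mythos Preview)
Your Step~1 is correct but takes a different route from the paper. The paper's proof is entirely combinatorial and never passes through $\indeg_w\M$ or kindred projections: it expands $\int_\M c_1(\mathcal L_Q)^d h_{[n]}^{r-1-d}$ using the dragon Hall--Rado formula of Proposition~\ref{prop:DHR}, reducing the positivity to the existence of nonnegative integers $(a_i)_{i\in[\ell]}$ summing to~$d$ with $\sum_{i\in I}a_i\le\rk_\M\bigl(\bigcup_{i\in I}S_i\bigr)-1$ for every nonempty $I\subseteq[\ell]$. Edmonds' polymatroid rank formula \cite[Theorem~8]{Edm70} then shows that the maximum of $\sum a_i$ over this polytope is exactly the minimum in the statement, i.e.~$d$, which simultaneously yields the positivity and (via \cite[Corollary~7.5]{LLPP24} and Proposition~\ref{prop:diminM}) the numerical dimension. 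Your route through $\rk_{\operatorname{Dilworth}(\M)}(A)$ and the submodular merging argument reaches the same numerical-dimension conclusion; the merging argument is in effect a hands-on reproof of a special case of Edmonds' formula.

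Your Step~2, however, has a genuine gap. The inference ``$(g,f_Q)|_{\indeg_w\M}$ is generically finite onto its image, hence for general $q\in f_Q(\indeg_w\M)$ the restriction $g|_{F_q}$ is generically finite'' is valid for an \emph{irreducible} domain, but $\indeg_w\M$ is typically a reducible union of torus-orbit closures. What you actually need is a single irreducible component $Z$ with both $\dim g(Z)=r-1$ and $\dim f_Q(Z)=d$. Nothing you wrote rules out the scenario in which the unique component $Z_0$ with $\dim g(Z_0)=r-1$ (unique because $\int_\M h_{[n]}^{r-1}=1$) has $\dim f_Q(Z_0)<d$, while every component achieving $\dim f_Q(Z)=d$ has $\dim g(Z)<r-1$; then a general $q$ has $F_q$ disjoint from $Z_0$ and $g_*[F_q]=0$, so your displayed sum vanishes. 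This is not the ``routine Bertini bookkeeping'' you flag. A repair with tools already in the paper: since $h_{[n]}$ is big and nef on the toric variety $X_{[n]}$, one may write $m\,h_{[n]}=A+E$ with $A$ ample and $E$ effective for suitable~$m$; Step~1 together with the proof of Proposition~\ref{prop:diminM} gives an ample $A$ with $\int_\M c_1(\mathcal L_Q)^d A^{r-1-d}>0$, and then Proposition~\ref{prop:monotone} yields $\int_\M c_1(\mathcal L_Q)^d h_{[n]}^{r-1-d}>0$.
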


\begin{proposition}\label{prop:DHR} \cite[Theorem 5.2.4]{BES}
For a multiset $\{J_1, \dotsc, J_{r-1}\}$ of subsets of $[n]$ and a loopless matroid $\M$ on $[n]$, then $\int_{\M} h_{J_1} \dotsb h_{J_{r-1}}$ is equal to $1$ if $\{J_1, \dotsc, J_{r-1}\}$ satisfies the \emph{dragon Hall--Rado condition}:
\[
\rk_\M\left(\bigcup_{i \in I} J_i\right) \geq |I| + 1 \quad\text{for all nonempty } I \subseteq [r-1],
\]
and is $0$ otherwise. 
\end{proposition}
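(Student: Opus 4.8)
I would prove the identity first for realizable~$\M$ by an intersection computation on the wonderful variety, and then extend to all matroids by valuativity. Write $T_I=\bigcup_{i\in I}J_i$ and $\rho_I=\rk_\M(T_I)$ for $\emptyset\neq I\subseteq[r-1]$. Using $\int_\M\xi=\xi\smallfrown[\Sigma_\M]$ (as in the proof of Proposition~\ref{prop:diminM}), one has $\int_\M h_{J_1}\cdots h_{J_{r-1}}=c_1(\mathcal L_{\Delta_{J_1}})\cdots c_1(\mathcal L_{\Delta_{J_{r-1}}})\smallfrown[\Sigma_\M]$. When $L\subseteq\kk^{[n]}$ realizes~$\M$, the class $[\Sigma_\M]$ is the fundamental class of $W_L$ in $A_{r-1}(X_{[n]})$ (Proposition~\ref{prop:degenerindM} and Hirzebruch--Riemann--Roch, as in Proposition~\ref{prop:HRR}), and $\mathcal L_{\Delta_J}$ restricts on $W_L$ to $\psi_J^{*}\mathcal O_{\PP(\kk^J)}(1)$, where $\psi_J\colon W_L\to\PP(\kk^J)$ is the morphism (the restriction of $f_{\Delta_J}$) resolving the coordinate projection $\PP L\dashrightarrow\PP(\kk^J)$. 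Since $\dim W_L=r-1$ and the classes $\psi_{J_i}^{*}\mathcal O(1)$ are basepoint-free, $\int_\M\prod_i h_{J_i}$ is the number, counted with multiplicity, of points of $W_L$ sent by each $\psi_{J_i}$ into a general hyperplane $H_i\subset\PP(\kk^{J_i})$.

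\textbf{When the condition fails.} Suppose $\rho_I\leq|I|$ for some nonempty~$I$. For $i\in I$ the map $\psi_{J_i}$ factors through $\psi_{T_I}\colon\PP L\dashrightarrow\PP\bigl(\mathrm{image}(L\to\kk^{T_I})\bigr)$, whose target is a projective space of dimension $\rho_I-1$. Passing to a smooth projective variety dominating $W_L$ on which $\psi_{T_I}$ and the projections $\PP^{\rho_I-1}\dashrightarrow\PP(\kk^{J_i})$ ($i\in I$) are morphisms, the product $\prod_{i\in I}\psi_{J_i}^{*}\mathcal O(1)$ is pulled back from a product of $|I|$ divisor classes on a variety of dimension $\rho_I-1<|I|$, hence vanishes. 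Therefore $\prod_{i=1}^{r-1}\psi_{J_i}^{*}\mathcal O(1)=0$ in the Chow ring, and $\int_\M\prod_i h_{J_i}=0$, as wanted.

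\textbf{When the condition holds.} Set $V_i=\langle x_k|_L:k\in J_i\rangle\subseteq L^{*}$, so that $\dim V_i=\rk_\M(J_i)$ and $\dim\sum_{i\in I}V_i=\rho_I$ for every $I$. A general $\ell_i\in V_i$ pulls back a general hyperplane of $\PP(\kk^{J_i})$ where $\psi_{J_i}$ is defined. Since $\rho_I\geq|I|+1>|I|$ for all $I$, Rado's theorem for the subspaces $V_i\subseteq L^{*}\cong\kk^{r}$ shows the general $\ell_i$ are linearly independent, so $\bigcap_i\{\ell_i=0\}\subset\PP L\cong\PP^{r-1}$ is a single reduced point~$[v_0]$. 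Applying the dragon Hall--Rado inequalities to index sets of the form $\{i\}\cup J$ --- which give $\rk_\M(J_i\cup\bigcup_{j\in J}J_j)-\rk_\M(J_i)\geq|J|+1$, and in particular $\rho_{[r-1]}=r$ --- one checks that, for general choices of the $\ell_i$, the point $[v_0]$ lies off every indeterminacy locus $\PP(L\cap\kk^{[n]\setminus J_i})$ of~$\psi_{J_i}$. A Bertini argument on the smooth variety $W_L$ then shows that, for general $H_i$, the intersection $\bigcap_i\psi_{J_i}^{-1}(H_i)$ meets neither the exceptional divisors of $W_L\to\PP L$ nor any non-transverse locus, so it is the single reduced point over~$[v_0]$, and $\int_\M\prod_i h_{J_i}=1$, as wanted.

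\textbf{All matroids, and the main difficulty.} The function $\M\mapsto\int_\M\prod_i h_{J_i}=(\prod_i h_{J_i})\smallfrown[\Sigma_\M]$ is valuative because $\M\mapsto[\Sigma_\M]$ is \cite[Corollary 7.11(ii)]{BEST23}, and the dragon Hall--Rado indicator is valuative as well (for fixed $\{J_i\}$ it records membership in the basis family of a Dilworth truncation of~$\M$, a valuativity extracted, e.g., from the type of multigraded formula behind Corollary~\ref{cor:kindredinM}). Since two valuative functions agreeing on all realizable matroids agree on all matroids \cite[Theorem 5.4]{DF10}, the identity holds for every loopless~$\M$. I expect the crux to be the transversality bookkeeping in the third step --- excluding spurious intersection points on the exceptional divisors of $W_L\to\PP L$, and confirming valuativity of the dragon Hall--Rado indicator. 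An alternative that sidesteps both this and the realizability reduction is to compute the self-intersection tropically on the Bergman fan $\Sigma_\M$: the failing case is the same dimension count, and the holding case follows from the multiplicity-one property of stable intersections of tropical linear spaces against tropical hyperplanes (cf.\ Remark~\ref{lem:multiplicities indM}), provided one identifies $c_1(\mathcal L_{\Delta_J})\cdot\Sigma_\M$ with the Bergman fan of the principal truncation of~$\M$ along~$J$.
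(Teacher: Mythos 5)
The paper itself does not prove Proposition~\ref{prop:DHR}: it is quoted from \cite[Theorem 5.2.4]{BES}, where it is established for \emph{all} loopless matroids by a direct computation in (a simplicial presentation of) $A^\bullet(\M)$, with no realizability hypothesis. Your route --- realizable case by intersection theory on~$W_L$, then extension by valuativity --- is therefore a genuinely different strategy. The two intersection computations are in outline sound, but there are two concrete problems in the writeup.

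First, in the ``condition holds'' step, the displayed inequality $\rk_\M(J_i\cup\bigcup_{j\in J}J_j)-\rk_\M(J_i)\ge|J|+1$ is not a consequence of DHR. DHR for the index set $J\cup\{i\}$ gives only $\rk_\M\bigl(\bigcup_{j\in J\cup\{i\}}J_j\bigr)\ge|J|+2$, and since DHR for the singleton $\{i\}$ gives $\rk_\M(J_i)\ge 2$, your inequality is strictly stronger and generally fails. Fortunately the correct bound is exactly what is needed. Writing $U_j$ for the image of $V_j$ in $L^*/V_i\cong(V_i^\perp)^*$, one has $\dim\sum_{j\in J}U_j=\rk_\M\bigl(\bigcup_{j\in J\cup\{i\}}J_j\bigr)-\rk_\M(J_i)$, and the Perfect--Pym rank formula for generic selections $\ell_j\in V_j$ says the restrictions $\ell_j|_{V_i^\perp}$, $j\ne i$, span $(V_i^\perp)^*$ if and only if $\dim\sum_{j\in J}U_j\ge|J|-(r-2)+\dim V_i^\perp$ for all $J\subseteq[r-1]\setminus\{i\}$; substituting $\dim V_i^\perp=r-\rk_\M(J_i)$ reduces this to DHR for $J\cup\{i\}$. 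With that fix, and with ``Bertini'' replaced by Kleiman's transitivity theorem in characteristic zero (applied once to $W_L\to\prod_i\PP(\kk^{J_i})$ to get a reduced $0$-cycle and once to each boundary divisor $E\subset W_L$ to get, by a dimension count, that the cycle misses~$E$), the realizable case goes through over $\overline{\QQ}$ or $\CC$.

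Second, the valuativity extension is incomplete as stated. To invoke \cite[Theorem 5.4]{DF10} you must know that the DHR indicator, as a function of~$\M$ for fixed $\{J_i\}$, is valuative; this is not automatic, since indicators of rank inequalities are not linear in~$\mathbf 1_{P(\M)}$. Your suggested source for this valuativity --- extracting a coefficient of $\prod_i a_{J_i}$ from the multigraded formula behind Corollary~\ref{cor:kindredinM}, that is, from the Snapper polynomial $\chi(\M,\bigotimes_S\mathcal L_{\Delta_S}^{\otimes a_S})$ --- is essentially equivalent, via the Hirzebruch--Riemann--Roch leading term as in Proposition~\ref{prop:HRR}, to the proposition you are proving; so either you are in a circle, or you should simply quote that formula and finish in one line, rendering the $W_L$ argument unnecessary. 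If you want the two-step strategy to stand on its own, you need an independent proof that the DHR indicator is valuative; absent that, the direct Chow-ring computation of \cite{BES} is the cleaner route.
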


\begin{proof}[Proof of Lemma~\ref{lem:simplicialcone}]
To prove that $\int_{\M} c_1(\mathcal{L}_Q)^d h_{[n]}^{r - 1 - d} > 0$, it suffices to check that there is some multiset $\{S_{i_1}, \dotsc, S_{i_d}\}$ that satisfies the dragon Hall--Rado condition after adding $r - 1 - d$ copies of $[n]$. 
Let $a_i$ be the multiplicity of~$S_i$ in such a family.
This holds for $(a_i)\in\mathbb N^\ell$ if and only if
\[\sum_{i\in I}a_i\le\rk_\M\left(\bigcup_{i\in I}S_i\right)-1\]
for every nonempty $I\subseteq[\ell]$.
By \cite[Theorem 8]{Edm70} the nonnegative integer vectors $(a_i)$ satisfying these bounds are the lattice points of a polymatroid in Edmonds' sense.
The maximum attained by $\sum_{i=1}^\ell a_i$ is given by the evaluation of its rank function at~$[\ell]$,
which, from the description in \cite[Theorem 8]{Edm70}, is exactly $d$. 
It follows from \cite[Corollary 7.5]{LLPP24}, which gives an explicit formula for the Snapper polynomial $\chi(\M, \bigotimes_S \mathcal L_{\Delta_S}^{\otimes a_S})$ in terms of the dragon Hall--Rado condition, 
that the degree of the polynomial $a \mapsto \chi(\M, \mathcal{L}_Q^{\otimes a})$ is $d$, and so the numerical dimension of $c_1(\mathcal{L}_Q)$ is $d$ by Proposition~\ref{prop:diminM}. 
\end{proof}

As the last item of preparation we recall a result implicit in the proof of \cite[Proposition 4.11]{EL}. 

\begin{proposition}\label{prop:monotone}
Let $D_1$ and $D_2$ be the restrictions of nef classes from $A^1(X_{[n]})$ to $A^1(\M)$, where $\M$ has rank~$r$. 
Let $E$ be the restriction of an effective class in $A^1(X_{[n]})$, and assume that $D_2 + E$ is the restriction of a nef class from $A^1(X_{[n]})$. Then, for each $0 \le a \le r-1$,
$$\int_{\mathrm{M}}D_1^a D_2^{r-1-a} \le \int_{\mathrm{M}}D^a_1 (D_2 + E)^{r - 1 - a}.$$
\end{proposition}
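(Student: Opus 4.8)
The plan is to reduce the inequality by a telescoping expansion to the nonnegativity of one family of mixed intersection numbers on $\M$, and then to evaluate those numbers using the deletion--contraction structure of the Chow ring of $\M$.

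\emph{Telescoping.} Set $m=r-1-a$. If $m=0$ the claimed inequality is the equality $1\ge 1$, so assume $m\ge 1$. Then
\[
(D_2+E)^m-D_2^m \;=\; E\cdot\sum_{j=0}^{m-1}(D_2+E)^j\,D_2^{\,m-1-j}.
\]
Multiplying by $D_1^a$ and applying $\int_\M$, the difference of the two sides of the proposition becomes a finite sum of terms of the form $\int_\M N_1\cdots N_{r-2}\,E$, in which each $N_i$ is one of $D_1$, $D_2$, or $D_2+E$; in particular each $N_i$ is the restriction to $A^1(\M)$ of a nef class on $X_{[n]}$, using the hypothesis that $D_2+E$ is such a restriction. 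So it suffices to prove: \emph{if $N_1,\dots,N_{r-2}$ are restrictions to $A^1(\M)$ of nef classes on $X_{[n]}$ and $E$ is the restriction of an effective class, then $\int_\M N_1\cdots N_{r-2}\,E\ge 0$.}

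\emph{Reduction to a boundary divisor, and evaluation.} Because $X_{[n]}$ is a smooth projective toric variety, its cone of effective divisor classes is generated by the classes $x_S=[V(\be_S)]$ of the torus-invariant prime divisors, so $E=\sum_S c_S\,x_S$ with all $c_S\ge 0$; by multilinearity it is enough to bound $\int_\M N_1\cdots N_{r-2}\,x_S$ for a fixed proper nonempty $S\subseteq[n]$. If $S$ is not a flat of $\M$, then $x_S$ lies in the kernel of the surjection $A^\bullet(X_{[n]})\twoheadrightarrow A^\bullet(\M)$ --- which is generated by the classes of non-flats --- so the intersection number vanishes. If $S=F$ is a flat, I would invoke the deletion--contraction description of matroid Chow rings from \cite{AHK18}: capping with $x_F$ identifies $x_F\smallfrown[\Sigma_\M]$ with the (effective) Bergman class of the minor $\M|_F\oplus\M/F$ --- both of which are loopless since $F$ is a flat of the loopless matroid $\M$ --- so that $\int_\M x_F\cdot\xi$ factors as a product of the degree maps $\int_{\M|_F}$ and $\int_{\M/F}$ applied to the corresponding restriction of $\xi$. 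Since restrictions of nef classes are nef and the top self-intersection of a nef class on any loopless matroid is nonnegative, this yields $\int_\M N_1\cdots N_{r-2}\,x_F\ge 0$, completing the argument.

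\emph{Main obstacle.} The only step using genuine input --- beyond formal multilinear algebra and the structure of the effective cone of $X_{[n]}$ --- is the flat case above: the nonnegativity of $\int_\M N_1\cdots N_{r-2}\,x_F$, equivalently the effectivity of $x_F\smallfrown[\Sigma_\M]$. When $\M$ is realized by $L$ this is transparent, since $W_L$ meets the boundary divisor $V(\be_F)$ transversely along a smaller wonderful variety, so capping is an honest intersection; for arbitrary $\M$ one must instead appeal to the minor decomposition of $A^\bullet(\M)$ (or verify directly that the tropical intersection of $\Sigma_\M$ with $V(\be_F)$ carries nonnegative weights). Everything else is bookkeeping.
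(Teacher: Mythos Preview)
The paper does not supply its own proof here, recording the result as implicit in \cite[Proposition~4.11]{EL}, so there is no in-paper argument to compare against. Your telescoping reduction and the decomposition of the effective class on $X_{[n]}$ into a nonnegative combination of the boundary classes $x_S$ are correct, as is the vanishing of $x_S$ in $A^\bullet(\M)$ for non-flats~$S$.

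In the flat case there are two imprecisions to correct. First, $\int_\M N_1\cdots N_{r-2}\,x_F$ does not ``factor as a product'' of $\int_{\M|_F}$ and $\int_{\M/F}$: restricting each $N_i$ to the divisor $X_F\times X_{[n]\setminus F}$ gives a sum $N_i'+N_i''$ of nef classes on the two factors, and expanding the product yields a \emph{sum} of terms $\bigl(\int_{\M|_F}\prod_{i\in T}N_i'\bigr)\bigl(\int_{\M/F}\prod_{i\notin T}N_i''\bigr)$. Second, the positivity you then cite---nonnegativity of the ``top self-intersection of a nef class''---is strictly weaker than what these factors require, namely that \emph{mixed} intersections of nef classes are nonnegative on every loopless matroid; the mixed statement does not follow formally from the self-intersection one. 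Your ``Main obstacle'' paragraph already points to the right repair: once $x_F\smallfrown[\Sigma_\M]$ is known to be an \emph{effective} class---it is the pushforward of the Bergman class of $\M|_F\oplus\M/F$, represented for instance by a product of tropical initial degenerations as in Section~\ref{sec:vanish}---the standard fact that products of nef divisor classes pair nonnegatively with effective cycles on a smooth projective variety gives $\int_\M N_1\cdots N_{r-2}\,x_F\ge 0$ at once, with no need for the factorisation or the self-intersection statement. Alternatively, since nef line bundles on projective toric varieties are basepoint-free and hence effective (so nonnegative torus-invariant) combinations of the $x_S$, one can rerun your own flat-decomposition step on one of the $N_i$ to prove the mixed nonnegativity by induction on the rank.
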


\begin{proof}[Proof of Theorem~\ref{thm:numericaldim}]
Let $Q$ be the Minkowski sum $\sum_{i = 1}^\ell \sum_{S \subseteq S_i} \Delta_S$, and let $d$ and $e$ be the numerical dimensions of  $c_1(\mathcal L_P)$ and $c_1(\mathcal L_Q)$, respectively.
By Lemma~\ref{lem:simplicialcone}, it suffices to show that $d = e$.

As $Q$ is the product of permutohedra, one from each $\RR^{S_i}$, the image of $X_{[n]}$ under $f_Q$ is $X_{S_1} \times \dotsb \times X_{S_{\ell}}$.
The partition $[n] = S_1 \sqcup \dotsb \sqcup S_\ell$ has the property that $P$ is a product $P_1 \times \dotsb \times P_\ell$ of generalized permutohedra $P_i \subset \RR^{S_i}$, each of maximal dimension $|S_i| - 1$ \cite[Section 5.1]{AA23}.
Thus, the map $f_P$ factors through $f_Q$, and so $\dim f_P(\indeg_w\M) \le \dim f_Q(\indeg_w\M)$, i.e., $d\leq e$.

The generalized permutohedron $P$ defines a nef and big line bundle on $X_{S_1} \times \dotsb \times X_{S_{\ell}}$. In particular, by \cite[Corollary 2.2.7]{Laz04a}, we can write $mc_1(\mathcal{L}_P) = c_1(\mathcal{L}_Q) + E$ for some $m > 0$, where  $E$ is the pullback of an effective divisor from $X_{S_1} \times \dotsb \times X_{S_\ell}$ (and so is effective). By Lemma~\ref{lem:simplicialcone}, $\int_{\M} c_1(\mathcal{L}_Q)^{e} h_{[n]}^{r - 1 - e} > 0$. By Proposition~\ref{prop:monotone}, we have
\[0< \int_{\mathrm{M}}  c_1(\mathcal{L}_Q)^e h_{[n]}^{r-1-e} \le \int_{\mathrm{M}}  (c_1(\mathcal{L}_Q)+ E)^{e} h_{[n]}^{r-1-e} = m^e\int_{\mathrm{M}} c_1(\mathcal{L}_P)^{e} h_{[n]}^{r-1-e}.\]
In particular, $c_1(\mathcal{L}_P)^{e}\neq 0$, so $e\leq d$.
\end{proof}

We record a case in which the formula of Theorem~\ref{thm:numericaldim} is particularly easy to evaluate.

\begin{corollary}\label{cor:numericaldim coarsening}
If $\M$ is a matroid of rank~$r$ on~$[n]$ and $P$ a generalized permutohedron such that 
the induced partition $S_1\sqcup\cdots\sqcup S_\ell$ of~$P$ coarsens the induced partition of~$P(\M)$ (into connected components),
then the numerical dimension of $c_1(\mathcal{L}_P)$ equals $r-\ell = r-n+\dim P$. 
\end{corollary}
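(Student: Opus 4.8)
The plan is to read this off directly from Theorem~\ref{thm:numericaldim} by exploiting the additivity of matroid rank over connected components. Recall that the partition of $[n]$ induced by the base polytope $P(\M)$ is precisely the partition $C_1\sqcup\cdots\sqcup C_m$ of $[n]$ into connected components of~$\M$. The hypothesis that the partition $S_1\sqcup\cdots\sqcup S_\ell$ induced by $P$ coarsens this partition therefore says that each part $S_i$ is a union of connected components of~$\M$. Before running the main argument I would also record the elementary dimension count $\dim P=n-\ell$: as used in the proof of Theorem~\ref{thm:numericaldim}, $P$ is a product $P_1\times\cdots\times P_\ell$ with $P_i\subset\RR^{S_i}$ of maximal dimension $|S_i|-1$, so $\dim P=\sum_i(|S_i|-1)=n-\ell$, whence $r-\ell=r-n+\dim P$, which accounts for the second claimed equality.

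Next I would evaluate the minimum appearing in Theorem~\ref{thm:numericaldim}. Let $[n]=T_1\sqcup\cdots\sqcup T_k$ be any partition coarsening $S_1\sqcup\cdots\sqcup S_\ell$. Then each $T_j$ is a union of some of the $S_i$, hence a union of connected components of~$\M$. Since rank is additive over a direct sum of matroids, $\rk_\M(T_j)=\sum_{C\subseteq T_j}\rk_\M(C)$, the sum being over connected components $C$ of~$\M$ contained in $T_j$; summing over $j$ gives $\sum_j\rk_\M(T_j)=\sum_C\rk_\M(C)=r$. Consequently
\[
\sum_{j=1}^{k}\bigl(\rk_\M(T_j)-1\bigr)=\Bigl(\sum_{j=1}^{k}\rk_\M(T_j)\Bigr)-k=r-k,
\]
which depends on the coarsening only through its number of parts~$k$.

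Finally, minimizing $r-k$ over coarsenings of $S_1\sqcup\cdots\sqcup S_\ell$ is the same as maximizing~$k$, and the largest possible value of~$k$ is~$\ell$, attained by the partition $S_1\sqcup\cdots\sqcup S_\ell$ itself. Hence Theorem~\ref{thm:numericaldim} yields that the numerical dimension of $c_1(\mathcal{L}_P)$ equals $r-\ell$, as desired. I do not expect any real obstacle here: essentially all of the work is contained in Theorem~\ref{thm:numericaldim}, and the only point requiring care is to invoke correctly the identification of the partition induced by $P(\M)$ with the connected-component partition of~$\M$, so that the objective function of that theorem collapses to the affine quantity $r-k$.
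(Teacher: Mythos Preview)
Your argument is correct and essentially identical to the paper's: both use additivity of $\rk_\M$ on unions of connected components to reduce the objective in Theorem~\ref{thm:numericaldim} to $r-k$, then maximize $k$. Your extra verification that $\dim P = n-\ell$ (justifying the second equality) is a helpful detail the paper leaves implicit.
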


\begin{proof}
The rank function $\rk_\M$ is additive on unions of connected components \cite[Fact 4.2.13]{Oxl11}.
Any partition $T_1\sqcup\cdots\sqcup T_k$ coarsening the induced partition of~$P$ also coarsens the partition into connected components of~$\M$, and therefore
\[\sum_{i=1}^k(\rk_\M(T_i)-1) = \rk_\M([n])-k = r-k\]
which is minimized by maximizing~$k$, i.e., taking $T_\bullet$ and $S_\bullet$ identical with $k=\ell$.
\end{proof}

While the Snapper polynomial of $\mathcal L_P$ on a matroid is valuative, we caution that the $h^*$ vector is not valuative because of the varying powers of $1-t$ that depend on the dimension, as computed in this section. Explicitly, let $\M$ be the rank 2 uniform matroid on $[4]$ (i.e., with all sets of size~2 as bases), let $\M_1$ and $\M_2$ be rank~2 matroids on~$[4]$ with unique non-basis $\{1,2\}$ and $\{3,4\}$, respectively, and let $\M_{12}$ be the rank~2 matroid on~$[4]$ with both of these as non-bases.  One can verify that with $P = P(\M_{12})$, the $h^*$ vectors do not respect the valuative relation $\mathbf 1_{P(\M)} = \mathbf 1_{P(\M_1)} + \mathbf 1_{P(\M_2)} - \mathbf 1_{P(\M_{12})}$.

\section{Discussion and examples}\label{sec:discussion}

In this section, we give some examples of nef divisors which do not satisfy the conclusion of Theorem~\ref{thm:vanish} and Theorem~\ref{thm:Macaulay},
and we discuss a few extensions of the main theorems.

\subsection{Examples}\label{sec:examples}

In the Hodge theory of matroids \cite{AHK18} and more generally in tropical Hodge theory \cite{ADH23}, a positivity property known as the ``K\"ahler package'' is established for the set of \emph{strictly convex divisor classes} \cite[Definition 4.1 \& 4.2]{AHK18} (see also \cite[Definition 5.1]{ADH23}), which contains, in general strictly, the pullbacks of ample divisors on the permutohedral variety.
In contrast, the following example shows that the conclusion of Theorem~\ref{thm:Macaulay} may fail for the line bundle of a ``strictly convex'' divisor class.

\begin{example}\label{ex:grid}
For $n\geq 2$, let $[n,\overline n] := \{1, \dotsc, n, \overline 1, \dotsc, \overline n\}$, and define $L = \CC^3 \hookrightarrow \CC^{[n, \overline n]}$ by $(x,y,z) \mapsto (x-z, x-2z, \dotsc, x-nz, y-z, y-2z, \dotsc, y-nz)$.  The corresponding hyperplane arrangement on $\PP L = \PP^2$ consists of two families of parallel lines $\{\ell_i := V(x-iz)\}_{i\in [n]}$ and $\{\ell_{\overline j} := V(y-jz)\}_{\overline j\in [\overline n]}$.
The wonderful variety $W_L$ is the blow-up of $n^2+2$ points consisting of $[0:1:0] = \bigcap_{i\in [n]}\ell_i$, $[1:0:0] = \bigcap_{\overline j\in [\overline n]} \ell_{\overline j}$, and $\ell_i \cap \ell_{\overline j}$ for every pair $(i,\overline j)$.
Let $x_{[n]}$, $x_{[\overline n]}$, and $x_{i,\overline j}$ respectively denote the exceptional divisors, and let $x_i$ be the strict transform of $\ell_i$ for $i\in [n,\overline n]$.
Denote by $\partial W_L$ the set of divisors $\{x_{i,\overline j}\}_{i\in [n], \overline j \in [\overline n]}\cup \{x_{[n]}, x_{[\overline n]}\} \cup \{x_i\}_{i\in [n,\overline n]}$.
For a divisor $D$ on $W_L$, one can translate \cite[Definition 5.1]{ADH23} to the statement that $D$ is ``strictly convex'' if and only if $D$ is $\RR$-linearly equivalent to a positive linear combination of $\partial W_L$ and the intersection product $D.E$ is positive for all $E \in \partial W_L$. 

Let $\alpha$ denote the pullback of the hyperplane class along the blow-down $W_L \to \PP^2$, and define a divisor class $D\in A^1(W_L)$ by
\[
D := (n+2)\alpha - x_{[n]} - x_{[\overline n]} - \sum_{i\in [n],\overline j \in [\overline n]} x_{i,\overline j}.
\]
Since $\alpha = \sum_{i\in [n]} x_{i,\overline k} + x_{[\overline n]} = \sum_{i\in [n]} x_{k,\overline i} + x_{[n]}$ in $A^1(W_L)$ for any fixed $k\in [n]$, one verifies that $D$ is $\RR$-linearly equivalent to a positive linear combination of the divisors in $\partial W_L$.  Moreover, one verifies that $D.E = 1$ for all $E\in \partial W_L$, so 
$D$ is ``strictly convex.''
By Riemann--Roch for surfaces, one computes
\[
\chi(W_L, \mathcal O_{W_L}(aD)) = \frac{D.D}{2}a^2 - \frac{D.K_{W_L}}{2}a + \chi(W_L, \mathcal O_{W_L}) = (2n+1)a^2 - \frac{(n+1)(n-4)}{2}a +1.
\]
From the relation $\chi(W_L, \mathcal O_{W_L}(aD)) = h_0^*\binom{a+2}{2} + h_1^*\binom{a+1}{2} + h_2^*\binom{a}{2}$, we find that
\[
(h_0^*, h_1^*, h_2^*) = \left( 1, \frac{-n^2+7n+2}{2},\frac{(n+1)n}{2} \right)\]
in this case.
In particular, the $h^*$-vector is not a Macaulay vector for $n\geq 6$: $n=6$ gives $(1,4,21)$, $n=7$ gives $(1,1,28)$, and $n\geq 8$ gives $h_1^*<0$.
One can verify directly using submodular functions that the divisor class $D$ is indeed not a pullback of a nef divisor on the permutohedral variety when $n>3$.
\end{example}

The next example shows that the conclusion of Theorem~\ref{thm:Macaulay} may fail for a line bundle even if there is a tensor power that is isomorphic to $\mathcal L_P$ for some generalized permutohedron $P$.

\begin{example}\label{ex:fano}
Let $\kk$ be a field of characteristic $2$, and let $L \subseteq \kk^7$ be a $3$-dimensional subspace whose intersections with the coordinate hyperplanes of~$\kk^7$ are, for some choice of coordinates on~$L$, the $7$ planes defined over~$\mathbb{F}_2$. The matroid realized by~$L$ is known as the \emph{Fano matroid}. 
Then $W_L$ is obtained by blowing up the $7$ $\mathbb{F}_2$-rational points of $\mathbb{P}^2$. Let $D$ be the anti-canonical divisor of $W_L$, so $D = \mathcal{O}(3) \otimes \mathcal{O}(-E_1 - \dotsb - E_7)$, where the $E_i$ are the exceptional divisors. The line bundle $\mathcal{O}(2D)$ is the restriction of the line bundle $\mathcal{L}_{-\Delta_{[7]}}$ on~$X_{[7]}$ corresponding to the divisor class usually denoted by $\beta$. 

However, $\mathcal{O}(D)$ is not the restriction of a nef line bundle from $X_{[7]}$. 
That is, there is no integer-valued submodular function $\rk \colon 2^{[7]}\to\mathbb Z$ taking value $0$ at the empty set and singletons, $-1$ at the three-element flats of the Fano matroid (the triples of coordinates of $\mathbb P(\kk^7)$ vanishing at $\mathbb F_2$-rational points of~$\mathbb PL$), and $-3$ at~$[7]$. We sketch a verification.
First, $\rk$ takes value at most $-2$ on sets of size~$5$, because these are unions of two three-element flats.
The assumptions imply $\rk$ is nonincreasing, so $\rk(S)\in\{0,-1\}$ when $|S|=2$. 
If $\rk(S)=-1$, submodularity forces that $\rk([7]\setminus S)=-2$ and, for any $T\subseteq[7]$, that $\rk(T)=\rk(T\cap S)+\rk(T\setminus S)$.
When $T$ is a three-element flat meeting $S$ in one element, this shows $\rk(T\setminus S)=-1$. Applying submodularity to two of these differences and one three-element flat disjoint from~$S$ gives the contradiction $\rk([7]\setminus S)\le-3$.
Otherwise $\rk(S)=0$ whenever $|S|=2$.
Then applying submodularity to a set of size~$5$ and two three-element flats that meet it in two elements implies the contradiction $\rk([7])\le-4$.

Let $x, y, z$ be coordinates on $\mathbb{P}^2$. The dimension of the space of sections $H^0(W_L, \mathcal{O}(D))$ is $3$, and the sections $a = xy(x+y), b = xz(x+z), c = yz(y+z)$ form a basis for $H^0(W_L, \mathcal{O}(D))$. The sections define a map $\pi \colon W_L \to \mathbb{P}^2$, which is generically $2$ to $1$. Let $d$ be the section $xyz(x+y)(x+z)(y+z)$ of $\mathcal{O}(2D)$. Then $H^0(W_L, \mathcal{O}(2D))$ is generated by $\operatorname{Sym}^2 H^0(W_L, \mathcal{O}(D))$ and $d$. The complete linear system of $H^0(W_L, \mathcal{O}(2D))$ defines a map $f$ from $W_L$ to $\mathbb{P}^6$, whose image is the variety usually called the \emph{reciprocal plane} \cite{PS06}, and this map factors through $\mathbb{P}(1,1,1,2)$. This map contracts the strict transforms of the $\mathbb{F}_2$-rational lines on $\mathbb{P}^2$ to obtain $7$ $A_1$ surface singularities. As $A_1$ surface singularities are rational 
and Gorenstein, we see that $Rf_* \mathcal{O}_{W_L} = \mathcal{O}_{f(W_L)}$. 

Viewing $f(W_L)$ as a hypersurface in $\mathbb{P}(1,1,1,2)$, we see that the section ring is $\kk[a,b,c,d]/(d^2 - abc(a+b+c))$, which is Gorenstein. This describes $f(W_L)$ as a double cover of $\mathbb{P}^2$, branched along the quartic $abc(a+b+c) = 0$. We see that $\pi_* \mathcal{O}_{W_L} = \mathcal{O}_{\mathbb{P}^2} \oplus \mathcal{O}_{\mathbb{P}^2}(-2)$ and $R^i \pi_* \mathcal{O}_{W_L} = 0$ for $i > 0$, and that the $h^*$ vector of $\mathcal{O}(D)$ is $(1,0,1)$. This is not a Macaulay vector.
\end{example}

\smallskip
The following example shows that  Theorem~\ref{thm:vanish}\ref{item:vanish} does not extend to restrictions to $W_L$ of globally generated vector bundles on $X_{[n]}$. 

\begin{example}
In \cite{BEST23}, the authors introduced a vector bundle $\mathcal{Q}_L$ on~$X_{[n]}$ associated to a linear subspace $L$ of~$\kk^n$.
This bundle $\mathcal{Q}_L$ is globally generated, and it is equipped with a section that transversely cuts out $W_L$ \cite[Theorem 7.10]{BEST23}, and so the restriction of $\mathcal{Q}_L$ to $W_L$ is identified with the normal bundle of $W_L$ in $X_{[n]}$. As such, $H^0(W_L, \mathcal{Q}_L)$ is identified with the tangent space to $[\mathcal{O}_{W_L}]$ in the Hilbert scheme of $X_{[n]}$, and $H^1(W_L, \mathcal{Q}_L)$ is the obstruction space to deforming $W_L$ inside of $X_{[n]}$ \cite[Proposition 6.5.2]{FGAExplained}.
Let $\kk = \mathbb{C}$, and let $\M$ be the matroid of rank $3$ on $[12]$ appearing in \cite[Theorem 4.7]{CoreyLuber}. This matroid has the property that $\operatorname{Gr}_{\M}$, the locally closed subset of $\operatorname{Gr}(3, 12)$ which consists of realizations of $\M$, is a $12$-dimensional scheme with $2$ singular points, which have tangent spaces of dimension $13$. One can show that if $L$ is a realization of $\M$, then an open neighborhood of $[\mathcal{O}_{W_L}]$ in the Hilbert scheme of $X_{[12]}$ is isomorphic to an open neighborhood of $\operatorname{Gr}_{\M}$ containing the point $[L]$. In particular, if $L$ corresponds to one of the smooth points of $\operatorname{Gr}_{\mathrm{M}}$, then $\dim H^0(W_L, \mathcal{Q}_L) = 12$, but if $L$ corresponds to one of the singular points of $\operatorname{Gr}_{\M}$, then $\dim H^0(W_L, \mathcal{Q}_L) = 13$ and $\dim H^1(W_L, \mathcal{Q}_L) > 0$. 
\end{example}

\subsection{Vanishing theorems for other varieties} 
We now explain how Theorem~\ref{thm:vanish} can be used to deduce vanishing theorems for some variations on wonderful varieties. Given a loopless matroid $\mathrm{M}$, a \emph{building set} $\mathcal{G}$ is a subset of the lattice of flats of $\mathrm{M}$ which satisfies some combinatorial conditions, given in \cite[Definition 4.4]{FK}. Recall that $\Delta_S$ denotes the convex hull of $\{\be_i : i \in S\}$.  Set
$$P_{\mathcal{G}} = \sum_{S \colon \operatorname{cl}_{\mathrm{M}}(S) \in \mathcal{G}} \Delta_S,$$
where the sum denotes Minkowski sum and $\operatorname{cl}_{\M}$ sends a set $S$ to the smallest flat containing it. Note that $P_{\mathcal{G}}$ is a generalized permutohedron. The \emph{wonderful variety of }$\mathcal{G}$, denoted $W_{L, \mathcal{G}}$, is the image of $W_L$ under the map from $X_{[n]}$ to $X_{P_{\mathcal{G}}}$. If $\mathcal{G}$ contains $[n]$, the combinatorial conditions on the building set $\mathcal{G}$ guarantee that $W_{L, \mathcal{G}}$ is smooth and can be described as an iterated blow-up of $\mathbb{P}L$ \cite[Section 4.1]{dCP95}. 

By Theorem~\ref{thm:vanish}\ref{item:push} and the projection formula, if $\mathcal{L}$ is any line bundle on $W_{L, \mathcal{G}}$, then the cohomology does not change when we pull back $\mathcal{L}$ along the map from $W_L$ to $W_{L, \mathcal{G}}$. In particular, Theorem~\ref{thm:vanish} implies the following result. 

\begin{theorem}\label{thm:building}
Let $L \subseteq \kk^n$ be a linear subspace which is not contained in any coordinate hyperplane, and let $\mathcal{G}$ be a building set on the lattice of flats of the matroid $\mathrm{M}$. 
Let $\mathcal{L}$ be the restriction of a nef line bundle on $X_{P_{\mathcal{G}}}$ to $W_{L, \mathcal{G}}$ of numerical dimension $d$. Then $H^i(W_{L, \mathcal{G}}, \mathcal{L}^{\otimes a}) = 0$ unless either $a \ge 0$ and $i = 0$, or $a < 0$ and $i = d$, the restriction map $H^0(X_{P_{\mathcal{G}}}, \mathcal{L}^{\otimes a}) \to H^0(W_{L, \mathcal{G}}, \mathcal{L}^{\otimes a})$ is surjective, and the ring $\bigoplus_{a \ge 0} H^0(W_{L, \mathcal{G}}, \mathcal{L}^{\otimes a})$ is Cohen--Macaulay and generated in degree $1$. 
\end{theorem}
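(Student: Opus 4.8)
The plan is to deduce everything from Theorem~\ref{thm:vanish} via the map $g\colon W_L \to W_{L,\mathcal G}$, which is the restriction of $f_{P_{\mathcal G}}\colon X_{[n]}\to X_{P_{\mathcal G}}$. First I would observe that since $W_{L,\mathcal G} = f_{P_{\mathcal G}}(W_L)$ by definition, Theorem~\ref{thm:vanish}\ref{item:push} gives $Rg_*\mathcal O_{W_L} = \mathcal O_{W_{L,\mathcal G}}$. Now let $\mathcal L$ be the restriction to $W_{L,\mathcal G}$ of a nef line bundle on $X_{P_{\mathcal G}}$; since $X_{P_{\mathcal G}} = X_P$ for a generalized permutohedron $P$, such a line bundle is $\mathcal O_{X_P}(1)$ for a suitable generalized permutohedron $P'$ with $X_{P'} = X_P$, and its pullback to $X_{[n]}$ is $\mathcal L_{P'}$. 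By the projection formula together with $Rg_*\mathcal O_{W_L} = \mathcal O_{W_{L,\mathcal G}}$, we get $Rg_*(g^*\mathcal L^{\otimes a}) = \mathcal L^{\otimes a}$, hence a canonical isomorphism
\[
H^i(W_{L,\mathcal G}, \mathcal L^{\otimes a}) \;\cong\; H^i(W_L, \mathcal L_{P'}^{\otimes a})
\]
for all $i$ and $a$. So the cohomology of $\mathcal L$ on $W_{L,\mathcal G}$ is literally the cohomology of $\mathcal L_{P'}$ on $W_L$, and Theorem~\ref{thm:vanish}\ref{item:vanish} transfers the vanishing statement verbatim, provided one checks that $\dim f_{P'}(W_L)$ equals the numerical dimension $d$ of $\mathcal L$ on $W_{L,\mathcal G}$. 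This is immediate: $f_{P'}$ factors as $W_L \xrightarrow{g} W_{L,\mathcal G} \hookrightarrow \PP(\kk^{P'\cap\ZZ^n})$, so $f_{P'}(W_L)$ is the image of $W_{L,\mathcal G}$ under the map defined by $\mathcal L$, whose dimension is exactly the numerical dimension of the ample-on-its-image line bundle $\mathcal L$ (one can also invoke Proposition~\ref{prop:diminM} and the fact that $[W_L]$ pushes forward to a multiple of $[W_{L,\mathcal G}]$).

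Next I would handle the surjectivity of the restriction map. We have the commutative square of restriction maps
\[
\begin{tikzcd}
H^0(X_{[n]}, \mathcal L_{P'}^{\otimes a}) \arrow[r] \arrow[d] & H^0(X_{P'}, \mathcal L^{\otimes a}) \arrow[d] \\
H^0(W_L, \mathcal L_{P'}^{\otimes a}) & H^0(W_{L,\mathcal G}, \mathcal L^{\otimes a}), \arrow[l, "\sim"']
\end{tikzcd}
\]
where the bottom isomorphism is the one established above, the left vertical map is surjective by Theorem~\ref{thm:vanish}\ref{item:surject}, and the top map is surjective because $f_{P'\,*}\mathcal O_{X_{[n]}} = \mathcal O_{X_{P'}}$ (the image of $X_{[n]}$ under $f_{P'}$ is the normal toric variety $X_{P'}$, by the discussion after Proposition~\ref{prop:strong normality}), so $H^0(X_{P'},\mathcal L^{\otimes a}) = H^0(X_{[n]}, \mathcal L_{P'}^{\otimes a})$. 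Chasing the diagram shows the right vertical map is surjective, which is the asserted surjectivity (after identifying $X_{P_{\mathcal G}} = X_{P'}$).

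Finally, the ring $\bigoplus_{a\ge 0} H^0(W_{L,\mathcal G}, \mathcal L^{\otimes a})$ is isomorphic, degree by degree, to $\bigoplus_{a\ge 0} H^0(W_L, \mathcal L_{P'}^{\otimes a})$, which by Corollary~\ref{cor:ACM} is the homogeneous coordinate ring of $f_{P'}(W_L)$, hence projectively normal, arithmetically Cohen--Macaulay, and generated in degree $1$. In particular it is Cohen--Macaulay and generated in degree~$1$, as claimed. The smoothness remarks about $W_{L,\mathcal G}$ when $[n]\in\mathcal G$ play no role in the proof; the argument works for the image under $f_{P_{\mathcal G}}$ regardless. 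I do not anticipate a genuine obstacle here — the only point requiring a moment's care is matching the numerical dimension of $\mathcal L$ on $W_{L,\mathcal G}$ with $\dim f_{P'}(W_L)$, and identifying the nef line bundle $\mathcal L$ on $X_{P_{\mathcal G}}$ as $\mathcal O(1)$ for an appropriate generalized permutohedron $P'$ (equivalently, pulling it back to a nef $\mathcal L_{P'}$ on $X_{[n]}$), both of which are routine given the toric dictionary and Theorem~\ref{thm:numericaldim}.
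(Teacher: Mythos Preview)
Your proposal is correct and follows essentially the same approach as the paper: use Theorem~\ref{thm:vanish}\ref{item:push} to get $Rg_*\mathcal O_{W_L}=\mathcal O_{W_{L,\mathcal G}}$, apply the projection formula to identify cohomology on $W_{L,\mathcal G}$ with cohomology on $W_L$, and then read off the conclusions from Theorem~\ref{thm:vanish} and Corollary~\ref{cor:ACM}. One small wording issue: a nef line bundle on $X_{P_{\mathcal G}}$ corresponds to a generalized permutohedron $P'$ whose normal fan is \emph{refined by} that of $P_{\mathcal G}$, not one with $X_{P'}=X_{P_{\mathcal G}}$; but since the pullback to $X_{[n]}$ is still $\mathcal L_{P'}$, this does not affect your argument.
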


In particular, Theorem~\ref{thm:building} proves vanishing theorems for line bundles on $\overline{M}_{0,n}$ \cite[Section 4.2]{dCP95} and wonderful compactifications of subspace arrangements (see \cite{BergmanPoly}).

\medskip

In \cite{BHMPW22}, the authors introduced the \emph{augmented wonderful variety} $W_{L}^{\operatorname{aug}}$ of a linear subspace $L \subseteq \kk^n$. This variety is constructed as an iterated blow-up of the projective completion $\mathbb{P}(L \oplus \kk)$, and it can also be defined as the closure of $L$ in the stellahedral toric variety ${X}_{\operatorname{St_n}}$, a certain toric compactification of $\kk^n$. We use Theorem~\ref{thm:vanish} to deduce the following vanishing theorem for $W_{L}^{\operatorname{aug}}$. 

\begin{theorem}
Let $L \subseteq \kk^n$ be a linear subspace, and let $\mathcal{L}$ be the restriction of a nef line bundle from ${X}_{\operatorname{St_n}}$ to $W_{L}^{\operatorname{aug}}$ of numerical dimension $d$. Then $H^i(W_{L}^{\operatorname{aug}}, \mathcal{L}^{\otimes a}) = 0$ unless either $a \ge 0$ and $i = 0$, or $a < 0$ and $i = d$, the restriction map $H^0(X_{\operatorname{St}_n}, \mathcal{L}^{\otimes a}) \to H^0(W_{L}^{\operatorname{aug}}, \mathcal{L}^{\otimes a})$ is surjective, and the ring $\bigoplus_{a \ge 0} H^0(W_{L}^{\operatorname{aug}}, \mathcal{L}^{\otimes a})$ is Cohen--Macaulay and generated in degree $1$. 
\end{theorem}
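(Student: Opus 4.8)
The plan is to mirror the proof strategy of Theorem~\ref{thm:building}, reducing the statement about $W_L^{\operatorname{aug}}$ in the stellahedral toric variety to the already-established Theorem~\ref{thm:vanish} about $W_L$ in the permutohedral toric variety. The key structural input is that there is a morphism connecting the two settings: the stellahedral variety $X_{\operatorname{St}_n}$ is the toric variety of the normal fan of a certain generalized permutohedron (the stellahedron), and $W_L^{\operatorname{aug}}$ is the image of a wonderful variety under a toric morphism. Concretely, one passes to $W_{L'}$ for the linear subspace $L' = L \oplus \kk \hookrightarrow \kk^{n} \oplus \kk = \kk^{[n+1]}$ (using that augmented wonderful varieties of $L$ are closely related to ordinary wonderful varieties of $L \oplus \kk$, via the cone construction on matroids), so that $W_L^{\operatorname{aug}}$ arises as $f_P(W_{L'})$ for a suitable generalized permutohedron $P$ in $\RR^{n+1}$ whose normal fan refines to give the stellahedral fan. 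This is the content of, e.g., the relation between the stellahedral variety and the permutohedral variety $X_{[n+1]}$ established in \cite{BHMPW22}.

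First I would make precise the identification of $W_L^{\operatorname{aug}}$ with $f_{P}(W_{L \oplus \kk})$ for an explicit generalized permutohedron $P \subseteq \RR^{[n+1]}$: the stellahedral fan is the image of the permutohedral fan $\Sigma_{[n+1]}$ under the projection forgetting the last coordinate direction, or equivalently $X_{\operatorname{St}_n}$ is the toric variety $X_P$ for $P$ the generalized permutohedron associated to the nef divisor on $X_{[n+1]}$ that induces this contraction. Second, I would invoke Theorem~\ref{thm:vanish}\ref{item:push} applied to $W_{L\oplus\kk}$: if $g \colon X_{[n+1]} \to X_{\operatorname{St}_n}$ is the toric contraction, then $Rg_* \mathcal{O}_{W_{L\oplus\kk}} = \mathcal{O}_{g(W_{L\oplus\kk})} = \mathcal{O}_{W_L^{\operatorname{aug}}}$, so by the projection formula, for any line bundle $\mathcal{L}$ on $X_{\operatorname{St}_n}$ and any $a$, the cohomology $H^i(W_L^{\operatorname{aug}}, \mathcal{L}^{\otimes a}|_{W_L^{\operatorname{aug}}})$ agrees with $H^i(W_{L\oplus\kk}, (g^*\mathcal{L})^{\otimes a}|_{W_{L\oplus\kk}})$. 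Third, since a nef line bundle $\mathcal{L}$ on $X_{\operatorname{St}_n}$ pulls back to a nef line bundle $g^*\mathcal{L} = \mathcal{L}_{P'}$ on $X_{[n+1]}$ associated to a generalized permutohedron $P'$ (the pullback of the polytope of $\mathcal{L}$ summed appropriately with $P$, still a generalized permutohedron), Theorem~\ref{thm:vanish} applied to $W_{L\oplus\kk}$ and $\mathcal{L}_{P'}$ gives exactly the asserted vanishing and the surjectivity of restriction from $H^0(X_{[n+1]}, \mathcal{L}_{P'}^{\otimes a})$. Finally, the surjectivity onto $H^0(W_L^{\operatorname{aug}},\mathcal L^{\otimes a})$ from $H^0(X_{\operatorname{St}_n},\mathcal L^{\otimes a})$ follows by composing with the surjection $H^0(X_{\operatorname{St}_n},\mathcal L^{\otimes a}) \to H^0(X_{[n+1]}, g^*\mathcal L^{\otimes a})$ (an isomorphism, in fact, since $Rg_*\mathcal O_{X_{[n+1]}} = \mathcal O_{X_{\operatorname{St}_n}}$ for a toric contraction), and the statement about $\dim$ and numerical dimension is transported along $g$ exactly as in the proof of Theorem~\ref{thm:numericaldim}. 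The Cohen--Macaulayness and degree-$1$ generation of the section ring then follow from Corollary~\ref{cor:ACM} applied in the $X_{[n+1]}$ setting, or directly by running the argument in the proof of Theorem~\ref{thm:Macaulay} with $\indeg_w(\M\oplus U_{1,1})$ in place of $W_{L \oplus \kk}$ — indeed the analogous statement holds for all matroids, not just realizable ones, since $\indeg_w$ and its kindredness are available for every matroid by Corollary~\ref{cor:kindredinM}.

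I expect the main obstacle to be pinning down the precise dictionary between the augmented wonderful variety / stellahedral variety and the ordinary wonderful variety / permutohedral variety. One must verify carefully that $W_L^{\operatorname{aug}}$ really is a toric contraction $g(W_{L\oplus\kk})$ (or $g(W_{L'})$ for an appropriate $L'$), that $g$ is of the form $f_P$ for a genuine generalized permutohedron $P$ so that Proposition~\ref{prop:push} and its consequences apply, and that nef classes on $X_{\operatorname{St}_n}$ pull back to nef classes of the same numerical dimension on $X_{[n+1]}$ (the latter because $g$ is a contraction and $g^*$ preserves nefness and is injective on the relevant Chow groups modulo the kernel of $g_*$, with $\dim g(W_L^{\operatorname{aug}})$ under the composite map equal to the numerical dimension, by the same ample-divisor argument used in Section~\ref{sec:dimensions}). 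Once that correspondence is set up — which is essentially bookkeeping with the polytope/fan descriptions in \cite{BHMPW22} — the cohomological content is entirely inherited from Theorem~\ref{thm:vanish} via the projection formula, with no new vanishing argument required. An alternative, perhaps cleaner, route that avoids the $L \oplus \kk$ detour: observe that $W_L^{\operatorname{aug}}$ is itself the image of $W_L$ under a map $X_{[n]} \to X_{\operatorname{St}_n}$ only after appropriate modification; if instead one establishes directly that the closure of $L$ in $X_{\operatorname{St}_n}$ is kindred in a suitable product of $\mathbb P^1$'s (using that the stellahedron embeds, via its edge directions, into a product of segments as in Proposition~\ref{prop:push}), then Corollary~\ref{cor:CM} and Proposition~\ref{prop:projection} give everything at once — but checking kindredness of the augmented wonderful variety requires its Snapper polynomial, which is the augmented analogue of the Dilworth-truncation computation and should follow from the tautological class computations of \cite{BHMPW22} in the same way Corollary~\ref{cor:kindredinM} follows from \cite{LLPP24}.
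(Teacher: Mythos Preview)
Your proposal is correct and follows essentially the same route as the paper: realize $W_L^{\operatorname{aug}}$ as $f_P(W_{L\oplus\kk})$ for a generalized permutohedron $P$ in $\mathbb{R}^{[n]\cup\{0\}}$ with $X_P\cong X_{\operatorname{St}_n}$, then pull back along $f_P$ and invoke Theorem~\ref{thm:vanish}\ref{item:push} and the projection formula. The paper carries this out slightly more concretely, first reducing to the loopless case and then taking $P=\sum_{S\ni 0}\Delta_S$ (citing \cite[Remark~4.1]{BHMPW22} for the identification $W_L^{\operatorname{aug}}\cong f_P(W_{L\oplus\kk})$), which is exactly the bookkeeping you flagged as the main thing to pin down.
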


\begin{proof}
By replacing $\kk^n$ with the smallest coordinate subspace containing $L$, we can reduce to the case when $L$ is not contained in any coordinate hyperplane.  
As described in \cite[Remark 4.1]{BHMPW22}, $W_L^{\operatorname{aug}}$ arises as a stage in the iterated blow-up construction of the wonderful variety of $L \oplus \kk \subseteq \kk^{[n] \cup \{0\}}$. Set 
$$P = \sum_{S \ni 0} \Delta_S \subset \mathbb{R}^{[n] \cup \{0\}},$$
where the sum denotes Minkowski sum. Then the stellahedral toric variety $X_{\operatorname{St}_n}$ is isomorphic to $X_P$. Let $f$ denote the map from $X_{[n+1]}$ to $X_P$, so $W_{L}^{\operatorname{aug}}$ is isomorphic to $f(W_{L \oplus \kk})$. The result then follows from Theorem~\ref{thm:vanish} by pulling back to $W_{L \oplus \kk}$. 
\end{proof}

One can similarly deduce an analogue of Theorem~\ref{thm:Macaulay} for augmented $K$-rings of matroids \cite{LLPP24}. 

\bibliography{EFL_sources}

\newcommand{\etalchar}[1]{$^{#1}$}
\begin{thebibliography}{CCRL{\etalchar{+}}20}

\bibitem[AA23]{AA23}
Marcelo Aguiar and Federico Ardila.
\newblock Hopf monoids and generalized permutahedra.
\newblock {\em Mem. Amer. Math. Soc.}, 289(1437):vi+119, 2023.

\bibitem[AB16]{AB16}
Federico Ardila and Adam Boocher.
\newblock The closure of a linear space in a product of lines.
\newblock {\em J. Algebraic Combin.}, 43(1):199--235, 2016.

\bibitem[ACEP20]{ACEP20}
Federico Ardila, Federico Castillo, Christopher Eur, and Alexander Postnikov.
\newblock Coxeter submodular functions and deformations of {C}oxeter
  permutahedra.
\newblock {\em Adv. Math.}, 365:107039, 2020.

\bibitem[ADH23]{ADH23}
Federico Ardila, Graham Denham, and June Huh.
\newblock Lagrangian geometry of matroids.
\newblock {\em J. Amer. Math. Soc.}, 36(3):727--794, 2023.

\bibitem[AHK18]{AHK18}
Karim Adiprasito, June Huh, and Eric Katz.
\newblock Hodge theory for combinatorial geometries.
\newblock {\em Ann. of Math. (2)}, 188(2):381--452, 2018.

\bibitem[AK06]{AK06}
Federico Ardila and Caroline~J. Klivans.
\newblock The {B}ergman complex of a matroid and phylogenetic trees.
\newblock {\em J. Combin. Theory Ser. B}, 96(1):38--49, 2006.

\bibitem[AP15]{AP15}
Dave Anderson and Sam Payne.
\newblock Operational {$K$}-theory.
\newblock {\em Doc. Math.}, 20:357--399, 2015.

\bibitem[Ard18]{Ard18}
Federico Ardila.
\newblock The geometry of matroids.
\newblock {\em Notices Amer. Math. Soc.}, 65(8):902--908, 2018.

\bibitem[BES24]{BES}
Spencer Backman, Christopher Eur, and Connor Simpson.
\newblock Simplicial generation of {C}how rings of matroids.
\newblock {\em J. Eur. Math. Soc. (JEMS)}, 26(11):4491--4535, 2024.

\bibitem[BEST23]{BEST23}
Andrew Berget, Christopher Eur, Hunter Spink, and Dennis Tseng.
\newblock Tautological classes of matroids.
\newblock {\em Invent. Math.}, 233(2):951--1039, 2023.

\bibitem[BF22]{BF22}
Andrew Berget and Alex Fink.
\newblock Equivariant {$K$}-theory classes of matrix orbit closures.
\newblock {\em Int. Math. Res. Not. IMRN}, (18):14105--14133, 2022.

\bibitem[BF24]{BF}
Andrew Berget and Alex Fink.
\newblock The external activity complex of a pair of matroids.
\newblock 2024.
\newblock ar{X}iv:2412.11759v2.

\bibitem[BH93]{BH93}
Winfried Bruns and J\"{u}rgen Herzog.
\newblock {\em Cohen-{M}acaulay rings}, volume~39 of {\em Cambridge Studies in
  Advanced Mathematics}.
\newblock Cambridge University Press, Cambridge, 1993.

\bibitem[BH20]{BH20}
Petter Br\"{a}nd\'{e}n and June Huh.
\newblock Lorentzian polynomials.
\newblock {\em Ann. of Math. (2)}, 192(3):821--891, 2020.

\bibitem[BHM{\etalchar{+}}22]{BHMPW22}
Tom Braden, June Huh, Jacob~P. Matherne, Nicholas Proudfoot, and Botong Wang.
\newblock A semi-small decomposition of the {C}how ring of a matroid.
\newblock {\em Adv. Math.}, 409:Paper No. 108646, 49, 2022.

\bibitem[BK05]{BK05}
Michel Brion and Shrawan Kumar.
\newblock {\em Frobenius splitting methods in geometry and representation
  theory}, volume 231 of {\em Progress in Mathematics}.
\newblock Birkh\"auser Boston, Inc., Boston, MA, 2005.

\bibitem[Bri01]{Brion}
Michel Brion.
\newblock On orbit closures of spherical subgroups in flag varieties.
\newblock {\em Comment. Math. Helv.}, 76(2):263--299, 2001.

\bibitem[Bri03]{Bri03}
Michel Brion.
\newblock Multiplicity-free subvarieties of flag varieties.
\newblock In {\em Commutative algebra ({G}renoble/{L}yon, 2001)}, volume 331 of
  {\em Contemp. Math.}, pages 13--23. Amer. Math. Soc., Providence, RI, 2003.

\bibitem[Bur23]{Burity}
Ricardo Burity.
\newblock On {$a$}-fold products ideals of hyperplane arrangements.
\newblock {\em Bull. Sci. Math.}, 189:Paper No. 103359, 20, 2023.

\bibitem[CCRL{\etalchar{+}}20]{CCrLMZ}
Federico Castillo, Yairon Cid-Ruiz, Binglin Li, Jonathan Monta\~no, and Naizhen
  Zhang.
\newblock When are multidegrees positive?
\newblock {\em Adv. Math.}, 374:107382, 34, 2020.

\bibitem[CDNG15]{CDNG15}
Aldo Conca, Emanuela De~Negri, and Elisa Gorla.
\newblock Universal {G}r\"obner bases for maximal minors.
\newblock {\em Int. Math. Res. Not. IMRN}, (11):3245--3262, 2015.

\bibitem[CDNG20]{CDNG20}
A.~Conca, E.~De~Negri, and E.~Gorla.
\newblock Universal {G}r\"obner bases and {C}artwright-{S}turmfels ideals.
\newblock {\em Int. Math. Res. Not. IMRN}, (7):1979--1991, 2020.

\bibitem[CHL{\etalchar{+}}22]{BergmanPoly}
Colin Crowley, June Huh, Matt Larson, Connor Simpson, and Botong Wang.
\newblock The {B}ergman fan of a polymatroid.
\newblock {\em Trans. Amer. Math. Soc. (to appear)}, 2022.
\newblock ar{X}iv:2207.08764v2.

\bibitem[CL23]{CoreyLuber}
Daniel Corey and Dante Luber.
\newblock Singular matroid realization spaces.
\newblock {\em Ann. Sc. Norm. Super. Pisa Cl. Sci. (5) (to appear)}, 2023.
\newblock ar{X}iv:2307.11915v2.

\bibitem[CLS11]{CLS11}
David~A. Cox, John~B. Little, and Henry~K. Schenck.
\newblock {\em Toric varieties}, volume 124 of {\em Graduate Studies in
  Mathematics}.
\newblock American Mathematical Society, Providence, RI, 2011.

\bibitem[DCP95]{dCP95}
Corrado De~Concini and Claudio Procesi.
\newblock Wonderful models of subspace arrangements.
\newblock {\em Selecta Math. (N.S.)}, 1(3):459--494, 1995.

\bibitem[DF10]{DF10}
Harm Derksen and Alex Fink.
\newblock Valuative invariants for polymatroids.
\newblock {\em Adv. Math.}, 225(4):1840--1892, 2010.

\bibitem[DS04]{DS04}
Mike Develin and Bernd Sturmfels.
\newblock Tropical convexity.
\newblock {\em Doc. Math.}, 9:1--27, 2004.

\bibitem[Edm70]{Edm70}
Jack Edmonds.
\newblock Submodular functions, matroids, and certain polyhedra.
\newblock In {\em Combinatorial {S}tructures and their {A}pplications ({P}roc.
  {C}algary {I}nternat. {C}onf., {C}algary, {A}lta., 1969)}, pages 69--87.
  Gordon and Breach, New York, 1970.

\bibitem[EHL23]{EHL23}
Christopher Eur, June Huh, and Matt Larson.
\newblock Stellahedral geometry of matroids.
\newblock {\em Forum Math. Pi}, 11:Paper No. e24, 2023.

\bibitem[EL23]{EL}
Christopher Eur and Matt Larson.
\newblock K-theoretic positivity for matroids.
\newblock {\em Alg. Geom. (to appear)}, 2023.
\newblock ar{X}iv:2311.11996v2.

\bibitem[EMPV25]{EMPV25}
Ben Elias, Dane Miyata, Nicholas Proudfoot, and Lorenzo Vecchi.
\newblock Categorical valuative invariants of polyhedra and matroids.
\newblock {\em Duke Math. J.}, 174(10):2067--2131, 2025.

\bibitem[FGI{\etalchar{+}}05]{FGAExplained}
Barbara Fantechi, Lothar G\"ottsche, Luc Illusie, Steven~L. Kleiman, Nitin
  Nitsure, and Angelo Vistoli.
\newblock {\em Fundamental algebraic geometry}, volume 123 of {\em Mathematical
  Surveys and Monographs}.
\newblock American Mathematical Society, Providence, RI, 2005.
\newblock Grothendieck's FGA explained.

\bibitem[FK04]{FK}
Eva-Maria Feichtner and Dmitry~N. Kozlov.
\newblock Incidence combinatorics of resolutions.
\newblock {\em Selecta Math. (N.S.)}, 10(1):37--60, 2004.

\bibitem[FS12]{FS12}
Alex Fink and David~E. Speyer.
\newblock {$K$}-classes for matroids and equivariant localization.
\newblock {\em Duke Math. J.}, 161(14):2699--2723, 2012.

\bibitem[FSS24]{FSS}
Alex Fink, Kris Shaw, and David~E. Speyer.
\newblock The omega invariant of a matroid.
\newblock 2024.
\newblock arXiv:2411.19521.

\bibitem[Ful98]{Ful98}
William Fulton.
\newblock {\em Intersection theory}.
\newblock Springer-Verlag, Berlin, second edition, 1998.

\bibitem[GGMS87]{GGMS87}
I.~M. Gelfand, R.~M. Goresky, R.~D. MacPherson, and V.~V. Serganova.
\newblock {Combinatorial geometries, convex polyhedra, and Schubert cells}.
\newblock {\em Adv. in Math.}, 63(3):301--316, 1987.

\bibitem[Gro78]{Gro78}
H.~Groemer.
\newblock On the extension of additive functionals on classes of convex sets.
\newblock {\em Pacific J. Math.}, 75(2):397--410, 1978.

\bibitem[GST18]{OTCM}
Mehdi Garrousian, Aron Simis, and \c{S}tefan~O. Toh\u{a}neanu.
\newblock A blowup algebra for hyperplane arrangements.
\newblock {\em Algebra Number Theory}, 12(6):1401--1429, 2018.

\bibitem[Ham15]{Ham15}
Simon Hampe.
\newblock Tropical linear spaces and tropical convexity.
\newblock {\em Electron. J. Combin.}, 22(4):Paper 4.43, 20, 2015.

\bibitem[Ham17]{Ham17}
Simon Hampe.
\newblock {The intersection ring of matroids}.
\newblock {\em J. Combin. Theory Ser. B}, 122:578--614, 2017.

\bibitem[HKT06]{HKT06}
Paul Hacking, Sean Keel, and Jenia Tevelev.
\newblock Compactification of the moduli space of hyperplane arrangements.
\newblock {\em J. Algebraic Geom.}, 15(4):657--680, 2006.

\bibitem[HL24]{HL}
Daoji Huang and Matt Larson.
\newblock Fine multidegrees, universal {G}r\"{o}bner bases, and matrix
  {S}chubert varieties.
\newblock 2024.
\newblock ar{X}iv:2410.02135v2.

\bibitem[Hyr99]{Hyry}
Eero Hyry.
\newblock The diagonal subring and the {C}ohen-{M}acaulay property of a
  multigraded ring.
\newblock {\em Trans. Amer. Math. Soc.}, 351(6):2213--2232, 1999.

\bibitem[Kap93]{Kap93}
M.~M. Kapranov.
\newblock Chow quotients of {G}rassmannians. {I}.
\newblock In {\em I. {M}. {G}elfand {S}eminar}, volume~16 of {\em Adv. Soviet
  Math.}, pages 29--110. Amer. Math. Soc., Providence, RI, 1993.

\bibitem[Kat09]{Kat09}
Eric Katz.
\newblock A tropical toolkit.
\newblock {\em Expo. Math.}, 27(1):1--36, 2009.

\bibitem[Knu09]{Knu09}
Allen Knutson.
\newblock Frobenius splitting and {M}\"obius inversion.
\newblock 2009.
\newblock ar{X}iv:0902.1930v1.

\bibitem[KT06]{KTChow}
Sean Keel and Jenia Tevelev.
\newblock Geometry of {C}how quotients of {G}rassmannians.
\newblock {\em Duke Math. J.}, 134(2):259--311, 2006.

\bibitem[Laf03]{Laf03}
L.~Lafforgue.
\newblock {\em Chirurgie des grassmanniennes}, volume~19 of {\em CRM Monograph
  Series}.
\newblock American Mathematical Society, Providence, RI, 2003.

\bibitem[Laz04]{Laz04a}
Robert Lazarsfeld.
\newblock {\em Positivity in algebraic geometry. {I}}.
\newblock Springer-Verlag, Berlin, 2004.
\newblock Classical setting: line bundles and linear series.

\bibitem[Li18]{BinglinLi}
Binglin Li.
\newblock Images of rational maps of projective spaces.
\newblock {\em Int. Math. Res. Not. IMRN}, (13):4190--4228, 2018.

\bibitem[Liu]{Ruizhen}
Ruizhen Liu.
\newblock Vanishing theorems on wonderful varieties.
\newblock In preparation.

\bibitem[LLPP24]{LLPP24}
Matt Larson, Shiyue Li, Sam Payne, and Nicholas Proudfoot.
\newblock {$K$}-rings of wonderful varieties and matroids.
\newblock {\em Adv. Math.}, 441:Paper No. 109554, 43, 2024.

\bibitem[Mas81]{Mas81}
J.~H. Mason.
\newblock Glueing matroids together: a study of {D}ilworth truncations and
  matroid analogues of exterior and symmetric powers.
\newblock In {\em Algebraic methods in graph theory, {V}ol. {I}, {II}
  ({S}zeged, 1978)}, Colloq. Math. Soc. J\'{a}nos Bolyai, 25, pages 519--561.
  1981.

\bibitem[Nel18]{Nel18}
Peter Nelson.
\newblock Almost all matroids are nonrepresentable.
\newblock {\em Bull. Lond. Math. Soc.}, 50(2):245--248, 2018.

\bibitem[OT94]{OrlikTerao}
Peter Orlik and Hiroaki Terao.
\newblock Commutative algebras for arrangements.
\newblock {\em Nagoya Math. J.}, 134:65--73, 1994.

\bibitem[Oxl11]{Oxl11}
James Oxley.
\newblock {\em {Matroid theory}}, volume~21 of {\em Oxford Graduate Texts in
  Mathematics}.
\newblock Oxford University Press, Oxford, 2 edition, 2011.

\bibitem[Pay09]{Pay09}
Sam Payne.
\newblock Frobenius splittings of toric varieties.
\newblock {\em Algebra Number Theory}, 3(1):107--119, 2009.

\bibitem[PB80]{PB80}
J.~Scott Provan and Louis~J. Billera.
\newblock Decompositions of simplicial complexes related to diameters of convex
  polyhedra.
\newblock {\em Math. Oper. Res.}, 5(4):576--594, 1980.

\bibitem[Per14]{Per14}
Nicolas Perrin.
\newblock On the geometry of spherical varieties.
\newblock {\em Transform. Groups}, 19(1):171--223, 2014.

\bibitem[PS06]{PS06}
Nicholas Proudfoot and David~E. Speyer.
\newblock A broken circuit ring.
\newblock {\em Beitr\"{a}ge Algebra Geom.}, 47(1):161--166, 2006.

\bibitem[Sna59]{Sna59}
Ernst Snapper.
\newblock Multiples of divisors.
\newblock {\em J. Math. Mech.}, 8:967--992, 1959.

\bibitem[Spe05]{SpeThesis}
David~E. Speyer.
\newblock {\em Tropical geometry}.
\newblock PhD thesis, University of California, Berkeley, 2005.

\bibitem[Spe08]{Spe08}
David~E. Speyer.
\newblock {Tropical linear spaces}.
\newblock {\em SIAM J. Discrete Math.}, 22(4):1527--1558, 2008.

\bibitem[Spe09]{Spe09}
David~E. Speyer.
\newblock A matroid invariant via the {$K$}-theory of the {G}rassmannian.
\newblock {\em Adv. Math.}, 221(3):882--913, 2009.

\bibitem[{Sta}25]{stacks-project}
The {Stacks Project Authors}.
\newblock \textit{Stacks Project}.
\newblock \url{https://stacks.math.columbia.edu}, 2025.

\bibitem[Stu02]{Stu02}
Bernd Sturmfels.
\newblock {\em Solving systems of polynomial equations}, volume~97 of {\em CBMS
  Regional Conference Series in Mathematics}.
\newblock Published for the Conference Board of the Mathematical Sciences,
  Washington, DC; by the American Mathematical Society, Providence, RI, 2002.

\bibitem[Ter02]{OTAlgebra}
Hiroaki Terao.
\newblock Algebras generated by reciprocals of linear forms.
\newblock {\em J. Algebra}, 250(2):549--558, 2002.

\bibitem[Toh21]{OTafold}
\c{S}tefan~O. Toh\u{a}neanu.
\newblock On ideals generated by {$a$}-fold products of linear forms.
\newblock {\em J. Commut. Algebra}, 13(4):549--570, 2021.

\bibitem[TX21]{GHM}
\c{S}tefan~O. Toh\u{a}neanu and Yu~Xie.
\newblock On the {G}eramita-{H}arbourne-{M}igliore conjecture.
\newblock {\em Trans. Amer. Math. Soc.}, 374(6):4059--4073, 2021.

\bibitem[Wel76]{Wel76}
D.~J.~A. Welsh.
\newblock {\em Matroid theory}.
\newblock Academic Press [Harcourt Brace Jovanovich, Publishers], London-New
  York, 1976.
\newblock L. M. S. Monographs, No. 8.

\end{thebibliography}
\bibliographystyle{alpha}

\end{document}